\def\AM{\textit{AM}}
\def\bnu{\boldsymbol{\eta}}
\def\bpi{\boldsymbol{\pi}}
\def\C{\operatorname {Ct}}
\def\cgamma{\operatorname{cap}}
\def\Cx{A}
\def\taux{\tau_A}
\def\A{\mathcal A}
\def\E{\mathcal E}
\def\F{\mathcal F}
\def\G{\mathcal G}
\def\K{\mathcal K}
\def\M{\mathcal M}
\def\ep{\varepsilon}
\def\ff{\varphi}
\def\er{\mathbb R}
\def\rn{\mathbb R^n}
\def\en{\mathbb N}
\def\ep{\varepsilon}
\def\ff{\varphi}
\def\er{\mathbb R}
\def\meas{\boldsymbol{m}}
\def\rn{\mathbb R^n}
\newdimen\vintkern
\def\vint{{-}\kern-\vintkern\int}
\def\eqn#1$$#2$${\begin{equation}\label#1#2\end{equation}}
\newtheorem{thm}{Theorem}[section]
\newtheorem{lemma}[thm]{Lemma}
\newtheorem{cor}[thm]{Corollary}
\newtheorem{prop}[thm]{Proposition}
\theoremstyle{definition}
\newtheorem{definition}[thm]{Definition}
\newtheorem{example}[thm]{Example}
\newtheorem{rmrk}[thm]{Remark}
\title{Plans on measures and $AM$-modulus}
\author{Vendula Honzlov\'a Exnerov\'a}
\address{Department of Mathematical Analysis, Faculty of Mathematics and Physics,
Charles University in Prague, So\-ko\-lovsk\'a 83,  Prague 8, 186\,75 Czech Republic}
\email{venda.ex@gmail.com}
\author{Ond\v{r}ej F.\,K. Kalenda}
\address{Department of Mathematical Analysis, Faculty of Mathematics and Physics,
Charles University in Prague, So\-ko\-lovsk\'a 83,  Prague 8, 186\,75 Czech Republic}
\email{kalenda@karlin.mff.cuni.cz}
\author{Jan Mal\'y}
\address{Department of Mathematical Analysis, Faculty of Mathematics and Physics,
Charles University in Prague, So\-ko\-lovsk\'a 83,  Prague 8, 186\,75 Czech Republic}
\email{maly@karlin.mff.cuni.cz}
\author{Olli Martio}
\address{Department of Mathematics and Statistics, FI-00014 University of Helsinki, Finland}
\email{olli.martio@helsinki.fi}
\thanks{The first and the third authors have been supported by the grant GA\,\v{C}R 
P201/18-07996S of the Czech Science Foundation.
The second author has been suported by the grant GA\,\v{C}R 17-00941S 
of the Czech Science Foundation.} 
\begin{document}
\begin{abstract}%
For measuring families of curves, or, more generally, 
of measures, $M_p$-modulus is traditionally used. More recent studies
use so-called plans on measures.
In their fundamental paper \cite{ADS}, Ambrosio, Di Marino and 
Savar\'e proved that these two approaches are in some sense equivalent
within $1<p<\infty$. We consider the limiting case $p=1$ and show 
that the $AM$-modulus can be obtained alternatively by the plan 
approach. On the way, we demonstrate unexpected behavior of the 
$AM$-modulus in comparison with usual capacities 
and consider the relations between the $M_1$--modulus and the $AM$--modulus.
\end{abstract}

\subjclass{28A12, 31B15}
\maketitle

\section{Introduction}
The concept of modulus
of curve family has been introduced by Ahlfors and Beurling \cite{AB},
substantially developed by Fuglede \cite{Fug} and thoroughly exploited
in geometric function theory, see \cite{MRSY} for an overview.
It can be shown
that the $W^{1,p}$-capacity of a set $A$ can be computed as the $p$-modulus of a 
special family of curves (Ziemer \cite{Zie}).
The modulus is an outer measure on the family of all 
rectifiable curves in a space and can be 
used to determine ``small families'' of curves, for example, the family
of all curves along which a $W^{1,p}$ function fails to be absolutely 
continuous,
see \cite{Fug}.

A parallel way of measuring families of curves is based on so-called 
\textit{plans}.
This concept has been introduced by Ambrosio,  Gigli and Savar\'e
\cite{AGS1}, \cite{AGS}.
Their research is motivated by applications to 
PDEs of the first order, gradient flows, heat flows, measure 
transportation
and to analysis  in metric 
measure spaces, in particular, to function spaces of the first
order.

A special attention must be paid if we are interested in $p=1$.
The $1$-modulus can be applied in connection with the Sobolev spaces
$W^{1,1}$, whereas it does not fit well if we are interested in $BV$ spaces.
Recently, Martio introduced the $AM$-modulus, which corresponds well to
the $BV$ theory. On the other hand, the $1$-plans can be well applied to the 
$BV$-theory, see Ambrosio and Di Marino \cite{ADM}.

Our research is motivated by the paper \cite{ADS} by Ambrosio,
Di Marino and Savar\'e, where it is shown that the $p$-modulus
and $p$-plan-content lead to the same result if $p>1$. 
We compare the $AM$-modulus and
the $1$--plan content introduced in Section~\ref{sec:plans}. 
Both our results and results of \cite{ADS}
work in a more general framework of families of measures (instead of curves).

The $AM$--modulus, the $M_1$--modulus, and the $1$--plan content are more
sensitive to the topology of the positive cone $\M^+(X)$ of finite
Radon measures in $X$ than the $M_p$--modulus, $p>1$.
Section~\ref{sec:topologies} is devoted to preparations to this analysis.
We consider a family of induced weak* topologies on $\M^+(X)$, so called $\tau_A$-topologies, including 
the $\tau_0$-topology from $C_0(X)^*$ if $X$ is locally compact and 
the $\tau_b$-topology from $C_b(X)^*$.

We study capacity-like properties of the $AM$--modulus, the $M_1$--modulus, and the $1$--plan content.
In many respects, both the $M_1$-- and the $AM$--modulus behave quite differently from the $M_p$--modulus, $p > 1$. 
Contrary to $p > 1$ neither the $M_1$-- nor the $AM$--modulus define a Choquet
capacity in $\M^+(X)$.
We show that the $AM$--modulus
is continuous under (all) increasing families of measures in $\M^+(X)$ only in trivial cases and, 
moreover, we provide a counterexample consisting of families of curves. In spite of this fact,
the  $M_1$-- and the $AM$--modulus  have many capacity like properties which are studied
in Section~\ref{sec:moduli} together with the interplay of the $M_1$-- and the $AM$--modulus. 
We employ these properties to show the $M_1$--modulus is not a Choquet capacity.
On the other hand, the $1$-plan content is 
continuous with respect to increasing families Choquet capacity, as shown in Section~\ref{sec:plans}. 

The fact that continuity with respect to increasing families is a subtle property can be demonstrated
on the Hausdorff content on a metric space $Y$, for which it holds. 
Already Federer's proof under the assumption that the underlying space is boundedly
compact is difficult, see \cite[Thm. 2.10.22]{Fed}. The general case is very deep and due to Davies 
\cite[Thm. 8]{Dav}.

Another property required in the definition of Choquet capacity is continuity with respect to decreasing
families of compact sets. In Proposition \ref{L:wupper} 
we show that the $AM$--modulus, the $M_1$--modulus, and the $1$--plan content
satisfy this property on $(\M^+(X),\tau_A)$. For the $1$--plan content this completes the verification
that it is a Choquet capacity. In fact, the continuity with respect to decreasing
families of compact sets of these functions is obtained for all $1\le p<\infty$. For $\tau_b$
the range $p>1$ follows already from \cite{ADS}, for other topologies it is new.

In Section~\ref{sec:m-c} we start comparison of 
$AM$ and the $1$-plan--content.
It is shown that
the $M_1$-- and the $AM$--modulus, as well as the $1$--plan content, coincide on $\tau_A$-compact families.
This is a partial counterpart of the equality result in \cite{ADS}. However,
the passing from compact families to general families is not as straightforward as for $p>1$.
The reason consists in the above mentioned failure of continuity on increasing families.
Nevertheless, in Section~\ref{sec:lc} a connection between
the $AM$--modulus and a special content in locally 
compact Polish spaces $X$ is obtained. (Note that local compactness is satisfied in most applications.)
Namely, in Theorem \ref{t:main} we show that, for an arbitrary family $\E\subset\M^+(X)$,
$$
AM(\E)=\inf\{\C(\G)\colon \G\supset \E \text{ is }\F_{\sigma} \text{ in }\tau_0\},
$$
where $\C$ is the $1$--plan content. As the expression on the right hand side can be regarded
as a natural outer-capacity-like function on families of measures created from plans,
we achieve the goal that the modulus approach and plan approach meet on arbitrary families of measures
also for $p=1$.

\section{Choquet capacity theory}\label{s:choquet}

In this section we recall the basic notions of Choquet's capacity theory \cite{Choq}
including the famous Choquet capacitability theorem. 

In next sections, we will study
capacity-like set functions on \textit{sets of measures},
i.e.\ our choice of $Y$ will
$\M^+(X)$ embedded into the positive cone of $C_b(X)^*$,
where $X$ is a metric measure space.

In this setting, Ambrosio,
Di Marino and Savar\'e \cite{ADS} proved that the $p$-modulus is a Choquet capacity,
see Theorem \ref{t:choquet}.

We will demonstrate that the situation for $p=1$ is
not as simple and show that some of properties listed below fail although
one could expect them to hold.

The classical motivation for Choquet capacity theory comes from
the potential theory where the fundamental example is that of
Newtonian capacity.

Let $Y$ be a Hausdorff topological space.

\begin{definition}\label{d:capacity}
Let $\cgamma\colon 2^Y\to[0,\infty]$ be a set function. Let us list some
properties that such functions may (or may not)  have.
\begin{enumerate}[(i)]
\item \label{monot}
For each $A,B\subset Y$,
$$
A\subset B\implies \cgamma (A)\le\cgamma(B)
$$
(monotonicity),
\item \label{upward}
for each $A_1,A_2,\dots \subset Y$,
$$
A_1\subset A_2\subset\dots \implies \cgamma\Bigl(\bigcup_{j=1}^{\infty}A_j\Bigr)
=\lim_{j\to\infty}\cgamma(A_j),
$$
\item \label{downward}
for each compact $K_1,K_2,\dots\subset Y$,
$$
K_1\supset K_2\supset\dots \implies \cgamma\Bigl(\bigcap_{j=1}^{\infty}K_j\Bigr)
=\lim_{j\to\infty}\cgamma(K_j),
$$
\item\label{outer}
For each $E\subset Y$,
$$
\cgamma(E)=\inf(\cgamma(G)\colon G\supset E,\;G\;\text{open})
$$
(outer regularity),
\item\label{wupper}
For each $K\subset Y$ compact,
$$
\cgamma(K)=\inf(\cgamma(G)\colon G\supset K,\;G\;\text{open}),
$$
\item\label{inner}
For each $E\subset Y$,
$$
\cgamma(E)=\sup(\cgamma(K)\colon K\subset E,\;K\;\text{compact})
$$
(inner regularity).
\end{enumerate}
We say that $\cgamma$ is a \textit{Choquet capacity} if it satisfies
\eqref{monot}--\eqref{downward} and $Y$ is a Polish space.

\end{definition}

\begin{rmrk}
If the set function $\cgamma$ satisfies \eqref{monot} and \eqref{wupper},
it also satisfies \eqref{downward}. Conversely, \eqref{monot} and 
\eqref{downward} imply \eqref{wupper} if $Y$ is metrizable and locally compact.
\end{rmrk} 

\begin{rmrk}\label{r:inner}
Set functions that appear in applications are seldom both 
outer and inner regular.
Classical capacities satisfy \eqref{monot}--\eqref{wupper}, but not
\eqref{inner}. For them, the outer regular capacity $\cgamma$ in consideration, has an
``associated'' inner regular ``inner capacity'' $\cgamma_*$,
which satisfies \eqref{monot} and \eqref{inner} and coincide with 
$\cgamma$ at least on Borel sets. Thus, $\cgamma_*$ satisfies the formula from
\eqref{upward} if we restrict our attention to sets on which $\cgamma_*=\cgamma$.
\end{rmrk}

\begin{definition}\label{d:capacitable} 
Let $\cgamma\colon 2^Y\to[0,\infty]$ be a set function
and $E\subset Y$. We say that $E$ is \textit{$\cgamma$-capacitable} if
$$
\cgamma(E)=\sup(\cgamma(K)\colon K\subset E,\;K\;\text{compact}).
$$
Let us emphasize that we use this terminology even if $\cgamma$ is not a (Choquet)
capacity.
\end{definition}

\begin{thm}[Choquet capacitability theorem \cite{Choq}]
Let $\cgamma$ be a Choquet capacity on a Polish space $Y$. Then every
Souslin set $E\subset X$ is $\cgamma$-capacitable.
\end{thm}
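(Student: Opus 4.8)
The plan is to run the classical Souslin-scheme argument of Choquet, adapted to a general monotone set function satisfying \eqref{monot}--\eqref{downward}. Fix a Souslin set $E\subset Y$ and $\ep>0$; we may assume $\cgamma(E)<\infty$, otherwise the quantity ``$\cgamma(E)-\ep$'' below is replaced by ``$1/\ep$'' throughout. The goal is to produce a compact $K\subset E$ with $\cgamma(K)\ge\cgamma(E)-\ep$. First I would fix a complete metric on $Y$ and replace the given Souslin representation of $E$ by a \emph{regular} scheme $(F_s)_s$, indexed by finite sequences of positive integers: the $F_s$ are closed, $F_{s\frown n}\subset F_s$, $\operatorname{diam}F_s\le 2^{-|s|}$ for $|s|\ge1$, and $E=\bigcup_{\sigma\in\en^{\en}}\bigcap_k F_{\sigma|_k}$ (equivalently, $E=f(\en^{\en})$ for a continuous $f$ with $\operatorname{diam}f(N_s)\le 2^{-|s|}$ on the basic clopen sets $N_s$ of $\en^{\en}$, and $F_s=\overline{f(N_s)}$). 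For finite $s$ set $E_s:=\bigcup\{\bigcap_kF_{\sigma|_k}\colon\sigma|_{|s|}=s\}$, so that $E_\emptyset=E$, $E_s\subset F_s$ and $E_s=\bigcup_n E_{s\frown n}$.

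Next, using upward continuity \eqref{upward} I would choose integers $m_1,m_2,\dots$ recursively so that, writing $P_k:=\{s\in\en^k\colon s(i)\le m_i,\ i\le k\}$, one has $\cgamma\bigl(\bigcup_{s\in P_k}E_s\bigr)\ge\cgamma(E)-\ep$ for every $k$; this is possible because $\bigcup_{s\in P_{k-1}}E_s$ is the increasing union over $m$ of the sets $\bigcup\{E_{s\frown n}\colon s\in P_{k-1},\ n\le m\}$, so a single application of \eqref{upward} loses only $\ep 2^{-k}$ at stage $k$. Put $L_k:=\bigcup_{s\in P_k}F_s$: a finite union of closed sets, hence closed; $L_k\supset\bigcup_{s\in P_k}E_s$, so $\cgamma(L_k)\ge\cgamma(E)-\ep$ by \eqref{monot}; and $L_{k+1}\subset L_k$ since $F_{s\frown n}\subset F_s$. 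Let $K:=\bigcap_k L_k$. Each $L_k$ is a finite union of sets of diameter $\le 2^{-k}$, hence covered by finitely many $2^{-k}$-balls, so $K\subset L_k$ is totally bounded; being closed in a complete space, $K$ is compact. Moreover $K\subset E$: for $x\in K$ the finite sequences $s$ (with entries bounded by the $m_i$) satisfying $x\in F_s$ form a finitely branching tree with a node on every level, so by K\"onig's lemma there is an infinite branch $\sigma$ with $x\in F_{\sigma|_k}$ for all $k$; since $\operatorname{diam}F_{\sigma|_k}\to0$ and $\bigcap_kF_{\sigma|_k}$ is the single point $f(\sigma)\in E$, we get $x=f(\sigma)\in E$.

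It remains to show $\cgamma(K)\ge\cgamma(E)-\ep$, and this is the step I expect to be the main obstacle. We have $L_k\downarrow K$ with the $L_k$ closed and $K$ compact, and one would like to apply downward continuity \eqref{downward} directly; the difficulty is that the $L_k$ need not be \emph{compact} when $Y$ is not locally compact. What holds without extra hypotheses is that, $K$ being compact, every open $G\supset K$ contains a metric neighbourhood of $K$, and then the total-boundedness/completeness bookkeeping from the previous paragraph forces $L_k\subset G$ for all large $k$ (otherwise a diagonal extraction from points $x_k\in L_k\setminus G$ produces a limit in $\bigcap_k L_k=K\subset G$, a contradiction). Hence $\cgamma(E)-\ep\le\lim_k\cgamma(L_k)\le\inf\{\cgamma(G)\colon G\supset K\ \text{open}\}$. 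To finish one now needs outer regularity of $\cgamma$ at the compact set $K$, i.e.\ property \eqref{wupper}. By the Remark after Definition~\ref{d:capacity}, \eqref{monot} together with \eqref{downward} yields \eqref{wupper} as soon as $Y$ is metrizable and locally compact — which is the case in the applications we have in mind — so there we conclude $\cgamma(K)=\inf\{\cgamma(G)\colon G\supset K\ \text{open}\}\ge\cgamma(E)-\ep$. For a genuinely arbitrary Polish $Y$ one instead carries the whole construction out inside a metric compactification of $Y$ after passing to an outer-regularized capacity, which is precisely the device of \cite{Choq}; I would cite that step rather than reprove it.

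In summary, the scheme combinatorics of the first two paragraphs together with K\"onig's lemma are routine; the real content — and the reason the result is attributed to Choquet — is the last step, the compatibility of \eqref{downward} with approximation of compact sets by closed (not compact) sets, where local compactness of $Y$, or Choquet's compactification trick in its absence, is exactly what is needed.
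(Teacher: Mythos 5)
The paper offers no proof of this theorem at all --- it only points to \cite[Appendix]{AE} and \cite[Theorem 30.13]{kechris} --- so there is nothing internal to compare your argument with; you are supplying the classical Souslin-scheme argument yourself. The bulk of it is sound: the reduction to a regular closed scheme with $\operatorname{diam}F_s\le 2^{-|s|}$ is standard, and the recursive use of \eqref{upward} to bound the branching, the total-boundedness argument for compactness of $K=\bigcap_kL_k$, the K\"onig's-lemma argument for $K\subset E$, and the diagonal extraction showing that every open $G\supset K$ eventually contains $L_k$ are all correct. At that point you have proved, from \eqref{monot} and \eqref{upward} alone, that
$$
\inf\{\cgamma(G)\colon G\supset K,\ G\ \text{open}\}\ \ge\ \cgamma(E)-\ep.
$$

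The gap is exactly where you locate it, but it is worse than you suggest: the final step cannot be repaired for a general Polish $Y$ under Definition~\ref{d:capacity}, because the statement is then actually false. Take $Y=\en^{\en}$ and set $\cgamma(A)=0$ if $A$ is contained in a $\sigma$-compact subset of $Y$, and $\cgamma(A)=1$ otherwise. This satisfies \eqref{monot}, \eqref{upward} (a countable union of $\sigma$-compact sets is $\sigma$-compact) and \eqref{downward} (all terms vanish), so it is a Choquet capacity in the sense of Definition~\ref{d:capacity}; yet the Souslin set $E=Y$ has $\cgamma(E)=1$ while $\cgamma(K)=0$ for every compact $K$, since $\en^{\en}$ is not $\sigma$-compact. (This $\cgamma$ violates \eqref{wupper}, as no nonempty open subset of $\en^{\en}$ lies in a $\sigma$-compact set.) Consequently the ``compactification plus outer-regularization'' escape hatch you invoke for arbitrary Polish $Y$ does not exist: outer-regularizing destroys \eqref{upward}, and no device can succeed. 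The resolution is that the sources the paper cites build property \eqref{wupper} (or an equivalent right-continuity on compacta) into the definition of a capacity; with \eqref{wupper} assumed, your displayed inequality gives $\cgamma(K)\ge\cgamma(E)-\ep$ at once and your proof is complete for \emph{every} Polish $Y$, with no local compactness and no compactification. You should therefore either add \eqref{wupper} as a hypothesis or restrict to locally compact $Y$, where the Remark after Definition~\ref{d:capacity} recovers \eqref{wupper} from \eqref{monot} and \eqref{downward}. This does not affect the paper's applications, since $\mu^*$ in Proposition~\ref{p:outer} and $\C$ in Theorem~\ref{t:choquetness} are shown, or easily seen, to satisfy \eqref{wupper}.
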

\begin{proof} See also e.g.\ \cite[Appendix]{AE} or \cite[Theorem 30.13]{kechris}.
\end{proof}

\begin{prop}[cf. Example 30.B.1 in \cite{kechris}]\label{p:outer} 
Let $\mu$ be a 
locally
finite Borel measure on a Polish space $Y$
and let $\mu^*$ be the induced outer measure, this means
$$
\mu^*(E)=\inf\{\mu(A)\colon A\text{ Borel }, A\supset E\}.
$$
Then $\mu^*$ is a Choquet capacity. In particular,
$$
\mu^*(A)=\sup\{\mu(K)\colon K\subset A\text{ compact}\}
$$
holds for any Souslin set $A$, so that $\mu$ 
is a Radon measure.
\end{prop}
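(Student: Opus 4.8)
The plan is to verify conditions \eqref{monot}, \eqref{upward} and \eqref{downward} of Definition~\ref{d:capacity} for $\cgamma = \mu^*$, since $Y$ is Polish by hypothesis; the Choquet capacitability theorem then gives that every Souslin set is $\mu^*$-capacitable, and applying this to Borel sets (which are Souslin) together with the fact that $\mu^*$ agrees with $\mu$ on Borel sets yields the final claim about $\mu$ being Radon.

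First, monotonicity \eqref{monot} is immediate: if $A \subset B$ then every Borel set containing $B$ also contains $A$, so the infimum defining $\mu^*(A)$ is over a larger family, hence $\mu^*(A) \le \mu^*(B)$. Next, for \eqref{upward}, given $A_1 \subset A_2 \subset \cdots$, monotonicity already gives $\lim_j \mu^*(A_j) \le \mu^*(\bigcup_j A_j)$, so only the reverse inequality needs work. For each $j$ choose a Borel set $B_j \supset A_j$ with $\mu(B_j) \le \mu^*(A_j) + \ep$ (assuming the left side is finite; the infinite case is trivial by monotonicity). Replacing $B_j$ by $\bigcap_{k \ge j} B_k$ — still Borel, still containing $A_j$ since $A_j \subset A_k$ for $k \ge j$, and with measure no larger — I may assume $B_1 \subset B_2 \subset \cdots$. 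Then $\bigcup_j A_j \subset \bigcup_j B_j$, so $\mu^*(\bigcup_j A_j) \le \mu(\bigcup_j B_j) = \lim_j \mu(B_j) \le \lim_j \mu^*(A_j) + \ep$ by continuity of the measure $\mu$ from below; let $\ep \to 0$. Here I should be slightly careful about local finiteness: I can first intersect everything with a fixed open set of finite measure and use a countable cover of $Y$ by such sets, or simply note the argument goes through verbatim once $\mu^*(\bigcup A_j)$ is known finite, and is vacuous otherwise.

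For \eqref{downward}, let $K_1 \supset K_2 \supset \cdots$ be compact with intersection $K$. Again $\mu^*(K) \le \lim_j \mu^*(K_j)$ by monotonicity, so I need the reverse. This is where outer regularity of $\mu$ on compact sets enters and is the main point: I claim $\mu^*(K) = \inf\{\mu(G) : G \supset K \text{ open}\}$. Indeed, on a Polish space a locally finite Borel measure is outer regular on every Borel set of finite measure (this is standard; alternatively it follows from the tightness/regularity of finite Borel measures on Polish spaces applied locally). Granting this, fix $\ep > 0$ and an open $G \supset K$ with $\mu(G) \le \mu^*(K) + \ep$, where I may shrink $G$ to have finite measure using local finiteness and compactness of $K$. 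Since $\bigcap_j K_j = K \subset G$ and the $K_j$ are compact and decreasing, $K_j \subset G$ for some $j$ (if not, $K_j \setminus G$ would be a decreasing sequence of nonempty compact sets with empty intersection). Then $\mu^*(K_j) \le \mu(G) \le \mu^*(K) + \ep$, so $\lim_j \mu^*(K_j) \le \mu^*(K) + \ep$; let $\ep \to 0$.

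The main obstacle is thus ensuring outer regularity of $\mu$ on sets of finite measure on a general Polish space (the closedness-to-compactness step in \eqref{downward} is a routine compactness argument, and \eqref{monot}, \eqref{upward} are essentially bookkeeping). One clean way to obtain it: write $Y$ as a countable increasing union of open sets $U_n$ with $\mu(U_n) < \infty$ (possible by local finiteness and the Lindelöf property of Polish spaces), and on each $U_n$, which is itself Polish, the finite Borel measure $\mu\llcorner U_n$ is regular by the classical theorem for finite Borel measures on Polish spaces; a diagonal argument then transfers outer regularity to the relevant sets of finite $\mu$-measure. Once \eqref{monot}--\eqref{downward} are in hand, $\mu^*$ is a Choquet capacity, Borel sets are Souslin hence $\mu^*$-capacitable, and for a Borel set $A$ we have $\mu(A) = \mu^*(A) = \sup\{\mu^*(K) : K \subset A \text{ compact}\} = \sup\{\mu(K) : K \subset A \text{ compact}\}$, which is exactly inner regularity of $\mu$, i.e., $\mu$ is Radon.
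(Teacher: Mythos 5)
Your proof is correct. Note that the paper itself offers no argument for this proposition -- it simply points to Example 30.B.1 and Theorem 30.13 in \cite{kechris} -- so there is no internal proof to compare against; your write-up supplies the standard argument that the citation stands in for. The three verifications are all sound: monotonicity is immediate; the upward continuity \eqref{upward} via Borel hulls $B_j$ replaced by $\bigcap_{k\ge j}B_k$ and continuity of $\mu$ from below is exactly right (and needs no finiteness hypothesis beyond the trivial reduction you mention); and the downward continuity \eqref{downward} correctly isolates outer regularity of $\mu$ at compact sets as the only substantive input, with the compactness argument $K_j\setminus G\searrow\emptyset$ handled properly. One remark on the only loosely stated step: you do not need outer regularity on all Borel sets of finite measure, nor any diagonal argument -- since a compact $K$ is contained in a single open set $U$ of finite measure (cover $K$ by finitely many such sets), it suffices to apply outer regularity of the finite Borel measure $\mu\llcorner U$ on the metric space $U$, which is the elementary classical fact. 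The concluding deduction (Souslin sets are capacitable, $\mu^*=\mu$ on Borel sets, hence inner regularity and the Radon property) is as intended; just note that the displayed supremum formula is asserted for all Souslin $A$, which follows verbatim from capacitability plus $\mu^*(K)=\mu(K)$ for compact $K$, not only for Borel $A$ as in your last sentence.
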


\section{Topologies on measures}\label{sec:topologies}

In this section,  $X$ will be a fixed Polish space. 

First we introduce some terminology in a more general setting.
If $\mu$ is a Borel measure on a Hausdorff completely regular space $Y$, 
we say that $\mu$ is a \textit{Radon measure}
if $\mu$ is finite on compact sets and \textit{inner regular} with respect to 
compact sets.
(Note that there is a discrepancy in literature what should be the meaning of ``Radon measure''.)
By $\M(Y)$ we denote the vector space of all 
finite signed Borel measures on $Y$ and $\M^+(Y)$ stands for the cone formed by positive measures 
from $\M(Y)$. 
In the full generality, we distinguish $\M(Y)$ (all finite signed Borel measures) and $\M(Y,t)$
(finite signed Radon measures, the letter $t$ comes from \textit{tight} which is 
another term used for the inner regularity).
Since $\M(Y,t)$ is canonically embedded to the dual space $C_b(Y)^*$ 
(where $C_b(Y)$ is the space of all bounded continuous functions on $Y$ endowed with the supremum norm), 
it may be considered with the inherited weak$^*$ topology. This topology will 
be denoted by $\tau_b$ in the sequel.

In our setting of Polish spaces, it follows from Proposition \ref{p:outer} that any 
locally finite Borel measure is Radon, in particular, $\M^+(X)=\M^+(X,t)$. 

Conversely, any Radon measure on a metrizable space $X$ is locally finite. 
(Assume that $\mu$ is Radon and not locally finite.
This means that there is some $x\in X$ such that $\mu(U)=\infty$ for 
each open set $U$ containing $x$. Fix a metric $d$ on $X$. Using the 
assumption and inner regularity we find a sequence $(K_n)_n$  of compact 
subsets of $X$ such that $K_n\subset B(x,\frac1n) \setminus(\{x\}\cup \bigcup_{k<n} K_k)$
and $\mu(K_n)>1$. Then 
$\{x\}\cup\bigcup_n K_n$ is a compact set of infinite measure, a contradiction.)

It is now easy to deduce that any Radon measure on a Polish space is outer regular with respect to open sets (cf. \cite[Proposition 412W(b)]{fremlin}). 

The topology $\tau_b$ restricted to $\M^+(Y,t)$ has several useful properties 
summarized in the following lemma.

\begin{lemma}\label{L:taub}
Let $T$ be a Hausdorff completely regular space.
Then the following holds.
\begin{itemize}
	\item[(i)] The topology $\tau_b$ is the weakest one in which 
	$\mu\mapsto\mu(T)$ is continuous and $\mu\mapsto\mu(G)$ is 
	lower semi-continuous for each open set $G\subset T$.
	\item[(ii)] If $S\subset T$, then $\M^+(S,t)$ is a topological subspace of $\M^+(T,t)$.
\end{itemize}
\end{lemma}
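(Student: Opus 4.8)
The plan is to prove (i) and (ii) separately, in that order, since (i) sets up the right framework for (ii). For (i), I would argue that $\tau_b$ has the two stated properties and then that any topology with those properties is at least as strong. That $\mu\mapsto\mu(T)$ is $\tau_b$-continuous is immediate, since $T$ corresponds to the constant function $1\in C_b(T)$. For lower semicontinuity of $\mu\mapsto\mu(G)$ with $G$ open, I would use inner regularity of Radon measures together with the complete regularity of $T$: for a compact $K\subset G$, Urysohn's lemma in the completely regular setting gives $f\in C_b(T)$ with $0\le f\le 1$, $f=1$ on $K$ and $\spt f\subset G$ (here one uses that $T$ is completely regular and $K$ compact to separate $K$ from $T\setminus G$), so $\mu(G)\ge\int f\,d\mu$, and taking the supremum over such $K$ and $f$ exhibits $\mu\mapsto\mu(G)$ as a supremum of $\tau_b$-continuous functions, hence lower semicontinuous. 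Conversely, if a topology $\sigma$ makes $\mu\mapsto\mu(T)$ continuous and each $\mu\mapsto\mu(G)$ lower semicontinuous, I must show every map $\mu\mapsto\int f\,d\mu$ with $f\in C_b(T)$ is $\sigma$-continuous. Writing $f=f^+-f^-$ and rescaling, it suffices to treat $0\le f\le 1$; then the layer-cake formula $\int f\,d\mu=\int_0^1\mu(\{f>t\})\,dt$ expresses this integral via the lower-semicontinuous functions $\mu\mapsto\mu(\{f>t\})$ (the sets $\{f>t\}$ are open), so $\mu\mapsto\int f\,d\mu$ is lower semicontinuous; applying the same to $1-f$ gives upper semicontinuity of $\mu\mapsto\int(1-f)\,d\mu=\mu(T)-\int f\,d\mu$, and since $\mu\mapsto\mu(T)$ is $\sigma$-continuous, $\mu\mapsto\int f\,d\mu$ is upper semicontinuous as well, hence continuous. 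Thus $\sigma\supset\tau_b$.

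For (ii), let $S\subset T$. Each $\nu\in\M^+(S,t)$ extends (push-forward under the inclusion) to a measure in $\M^+(T,t)$ — one needs to check that tightness is preserved, which holds because a compact subset of $S$ is a compact subset of $T$ and the inclusion is continuous, so inner regularity with respect to compacta transfers. This gives an injective map $\iota\colon\M^+(S,t)\to\M^+(T,t)$, and I must show it is a homeomorphism onto its image. Continuity of $\iota$ for the $\tau_b$-topologies: for $g\in C_b(T)$, the map $\nu\mapsto\int g\,d(\iota\nu)=\int (g|_S)\,d\nu$ is $\tau_b$-continuous on $\M^+(S,t)$ because $g|_S\in C_b(S)$. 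For the reverse direction I would invoke part (i): the $\tau_b$-topology on $\M^+(S,t)$ is generated by $\nu\mapsto\nu(S)$ and the functions $\nu\mapsto\nu(V)$ for $V$ relatively open in $S$. Now $\nu(S)=(\iota\nu)(T)$ depends $\tau_b$-continuously on $\iota\nu$, and for $V=G\cap S$ with $G$ open in $T$ we have $\nu(V)=(\iota\nu)(G)$, which is lower semicontinuous in $\iota\nu$; by the characterization in (i) this shows the topology transported from $\M^+(T,t)$ via $\iota$ is at least as strong as $\tau_b$ on $\M^+(S,t)$, giving continuity of $\iota^{-1}$ on the image.

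The main obstacle I anticipate is the completely-regular Urysohn step in (i): separating a compact set from the complement of an open set by a bounded continuous function, and ensuring one genuinely gets $\mu(G)=\sup\{\int f\,d\mu\}$ rather than just an inequality — this is where inner regularity of Radon measures is essential and must be quoted carefully. A secondary subtlety is making sure, throughout (ii), that "relatively open in $S$" sets are exactly traces of open sets of $T$ (true for the subspace topology) and that no issue of Borel structure on $S$ versus $T$ intervenes, since $S$ need not be a Borel subset of $T$; but because we work with $\M^+(S,t)$ intrinsically and only use the inclusion $S\hookrightarrow T$ to push measures forward, this causes no trouble.
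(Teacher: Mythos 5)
The paper does not actually prove this lemma from scratch: assertion (i) is quoted from the portmanteau-type equivalence (v)$\Leftrightarrow$(viii) of \cite[Theorem 8.1]{Topsoe}, and (ii) is quoted from \cite[Lemma 1]{HoKa}. Your proposal is therefore a genuinely different, self-contained route, and its architecture is the right one: the forward half of (i) uses tightness together with the completely regular Urysohn separation of a compact $K\subset G$ from $T\setminus G$ to write $\mu(G)$ as a supremum of $\tau_b$-continuous functionals; the converse half uses the layer-cake formula; and (ii) is deduced from (i) by pulling back the generating family ($\nu\mapsto\nu(S)$ and $\nu\mapsto\nu(G\cap S)$) along the pushforward embedding. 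Your remarks on tightness being preserved under the inclusion and on $S$ not needing to be Borel in $T$ are also correct and to the point. What the self-contained argument buys is transparency about exactly which regularity of the measures is used (inner regularity by compacta, nothing more); what the citation buys is brevity and a ready-made treatment of the net-convergence subtleties.

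One step of yours needs repair. In the converse half of (i) you assert that $\mu\mapsto\int_0^1\mu(\{f>t\})\,dt$ is $\sigma$-lower semicontinuous \emph{because} each $\mu\mapsto\mu(\{f>t\})$ is. For an arbitrary topology $\sigma$ on $\M^+(T,t)$ this is a Fatou-type interchange along nets, and Fatou's lemma fails for nets (the net of indicator functions of finite subsets of $[0,1]$, directed by inclusion, already violates it); since $\sigma$ need not be first countable you cannot reduce to sequences. The conclusion is nevertheless true, but for a different reason: $t\mapsto\mu(\{f>t\})$ is decreasing, so
$\int_0^1\mu(\{f>t\})\,dt=\sup_n\tfrac1n\sum_{k=1}^n\mu(\{f>k/n\})$,
a supremum of finite sums of $\sigma$-lsc functions, hence $\sigma$-lsc. (The identity holds because the lower and upper Riemann sums of a bounded decreasing function over the uniform partition differ by at most $h(0)/n$.) With this inserted, the rest of your argument --- upper semicontinuity via $1-f$ and the continuity of $\mu\mapsto\mu(T)$, and then part (ii) --- goes through.
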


\begin{proof} Assertion (i) follows from the equivalence (v)$\Leftrightarrow$(viii) in \cite[Theorem 8.1]{Topsoe}.

Assertion (ii) is an easy consequence of (i) as remarked in \cite[Lemma 1]{HoKa}.
\end{proof}

Now, we return to the generality of Polish spaces.

The space $(\M^+(X),\tau_b)$ is Polish (see Proposition~\ref{P:tauA} below).
However, in case $X$ is locally compact there is another natural topology. In this case the space $\M(X)$ (equipped with the total variation norm) is, due to the Riesz theorem, isometric to the dual space $C_0(X)^*$ (where $C_0(X)$ is the space of continuous functions on $X$ vanishing at infinity equipped with the supremum norm which coincides with the closure of continuous functions with compact support in $C_b(X)$. The respective weak$^*$ topology will be denoted by $\tau_0$. By Banach-Alaoglu theorem $\M(X)$ (and hence also $\M^+(X)$) is $\sigma$-compact in $\tau_0$.

If $X$ is even compact, then clearly $\tau_b=\tau_0$. However, if $X$ is not compact, these two topologies are different. In particular, $(\M^+(X),\tau_0)$
is not metrizable and $(\M^+(X),\tau_b)$ is not $\sigma$-compact (see Proposition~\ref{P:tauA} below).

In order to cover these two interesting cases and possibly some other cases as well we adopt an abstract
approach.

Let us call a closed subspace $\Cx(X)$ of $C_b(X)$ \emph{acceptable} if it satisfies the following two properties.
\begin{itemize}
	\item[(A1)] $\Cx(X)$ is a sublattice of $C_b(X)$.
	\item[(A2)] For any closed subset $F\subset X$ and $x\in X\setminus F$ there exists $g\in\Cx(X)$ such that $0\le g\le 1$, $g(x)=1$ and $g|_F=0$.
	\end{itemize}

In the following lemma we collect several equivalent formulations of the property (A2). The property (A4) will the the key one.

\begin{lemma}\label{L:Ax} Let $\Cx(X)$ be a closed linear sublattice of $C_b(X)$. Then the following assertions are equivalent.
\begin{itemize}
	\item[(A2)] For any closed subset $F\subset X$ and $x\in X\setminus F$ there exists $g\in\Cx(X)$ such that $g(x)=1$ and $g|_F=0$.
		\item[(A3)] For any $f\in C_b(X)^+$ we have
		$$f(x)=\sup\{g(x)\colon g\in\Cx(X), 0\le g\le f\},\quad x\in X.$$ 
			\item[(A4)] For any $f\in C_b(X)^+$ there exists a sequence $(g_j)_j$ of functions
from $\Cx(X)^+$ such that $g_j\nearrow f$.
  \item[(A5)] The family 
		$$\{x\in X\colon g(x)>0\},\quad g\in \Cx(X)$$
		form a base of the topology of $X$.
		\end{itemize}
\end{lemma}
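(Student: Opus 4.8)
The plan is to prove the chain of equivalences $\textrm{(A2)}\Rightarrow\textrm{(A3)}\Rightarrow\textrm{(A4)}\Rightarrow\textrm{(A5)}\Rightarrow\textrm{(A2)}$, exploiting the standing hypothesis that $\Cx(X)$ is a \emph{closed} linear sublattice of $C_b(X)$ throughout.

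\medskip\noindent\emph{(A2)$\Rightarrow$(A3).} Fix $f\in C_b(X)^+$ and $x\in X$. The inequality $\sup\{g(x)\colon g\in\Cx(X),\ 0\le g\le f\}\le f(x)$ is trivial, so I must produce, for every $\ep>0$, some $g\in\Cx(X)$ with $0\le g\le f$ and $g(x)>f(x)-\ep$. If $f(x)=0$ take $g=0$; otherwise set $c=f(x)$ and let $F=\{y\colon f(y)\le c-\ep\}$, a closed set not containing $x$. Apply (A2) to get $h\in\Cx(X)$ with $h(x)=1$, $h|_F=0$; by replacing $h$ with $(h\vee 0)\wedge 1$ (using that $\Cx(X)$ is a sublattice and contains constants --- note: it contains constants because it is a linear sublattice that is, by hypothesis, required to be ``acceptable'' only later; here I should instead clip using $0$ and a suitable constant multiple, so let me use $h^+:=h\vee 0\in\Cx(X)$ and then $g:=(c\,h^+)\wedge f\in\Cx(X)$). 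Then $0\le g\le f$, and $g(x)=\min(c\cdot 1,\,c)=c=f(x)$, even better than required. One subtlety: $\Cx(X)$ need not contain constants in general, but a linear sublattice is closed under $u\mapsto u\wedge v$ and $u\mapsto u\vee v$ only for $u,v\in\Cx(X)$; the clipping $(c\,h^+)\wedge f$ is legitimate only if $f\in\Cx(X)$. So instead I will argue directly: since $h$ is continuous with $h(x)=1$, the set $U=\{h>1-\delta\}$ is an open neighborhood of $x$ and does not meet $F$ for $\delta$ small; then $h^+\in\Cx(X)$, $h^+\ge 0$, and $g:=c\,h^+$ satisfies $g(x)=c$. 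To get $g\le f$ I replace $h$ at the outset by $h\wedge \tfrac1c f$ --- but again that needs $f\in\Cx(X)$. The clean fix: use (A2) in the stronger normalized form with $0\le g\le 1$ as in the original property; then $g\,f$ is not in $\Cx(X)$ either. I will therefore route through (A4) first or reorganize --- see below.

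\medskip\noindent\emph{Reorganized plan.} I will prove (A2)$\Rightarrow$(A5)$\Rightarrow$(A4)$\Rightarrow$(A3)$\Rightarrow$(A2). For (A2)$\Rightarrow$(A5): given $x$ and an open neighborhood $V$, apply (A2) with $F=X\setminus V$ to get $g\in\Cx(X)$, $g(x)=1$, $g|_F=0$; then $\{g>0\}$ is an open set containing $x$ and contained in $V$, so these sets form a base. For (A5)$\Rightarrow$(A4): fix $f\in C_b(X)^+$; for each rational $q>0$ and each $x$ with $f(x)>q$, pick by (A5) a function $g_{x,q}\in\Cx(X)$ with $g_{x,q}(x)>0$ and $\{g_{x,q}>0\}\subset\{f>q\}$; normalizing $g_{x,q}^+$ and multiplying by a small constant we may assume $0\le g_{x,q}\le q$ and $g_{x,q}(x)=q\wedge$ (something positive). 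The supremum over a suitable countable subfamily is an $f'\le f$ that approximates $f$ pointwise from below; taking finite maxima (lattice operations stay in $\Cx(X)$) and then the increasing limit gives $g_j\nearrow f$. This is the step I expect to be the main obstacle: one must assemble a \emph{countable} increasing sequence, uniformly bounded appropriately, whose pointwise supremum is exactly $f$, using only lattice operations and the base property --- separability of $X$ (Polish!) is what makes the countable selection possible, and care is needed that the clipped functions $g_{x,q}\wedge q$ genuinely approach $f(x)$ as $q\uparrow f(x)$ through rationals.

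\medskip\noindent\emph{(A4)$\Rightarrow$(A3)} is immediate: if $g_j\nearrow f$ pointwise with $g_j\in\Cx(X)^+$, then $\sup_j g_j(x)=f(x)$ for each $x$, giving (A3). Finally \emph{(A3)$\Rightarrow$(A2):} given closed $F$ and $x\notin F$, pick by complete regularity of $X$ (metrizable!) a function $f\in C_b(X)$ with $0\le f\le 1$, $f(x)=1$, $f|_F=0$; apply (A3) at the point $x$ to find $g\in\Cx(X)$ with $0\le g\le f$ and $g(x)>1/2$; then $g$ vanishes on $F$ (since $f$ does and $0\le g\le f$) and $g(x)>0$, and rescaling $\bigl(g/g(x)\bigr)\wedge 1\in\Cx(X)$ --- wait, this again needs a constant; instead note the statement (A2) in the lemma only requires $g(x)=1$ and $g|_F=0$ without the bound $0\le g\le 1$, so $g/g(x)\in\Cx(X)$ works directly by linearity. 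This closes the cycle. The genuinely delicate point, to be handled with explicit estimates in the final write-up, remains the countable lattice-theoretic construction in (A5)$\Rightarrow$(A4); everything else is short.
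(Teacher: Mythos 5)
Your reorganized cycle (A2)$\Rightarrow$(A5)$\Rightarrow$(A4)$\Rightarrow$(A3)$\Rightarrow$(A2) has a genuine gap at the implication (A5)$\Rightarrow$(A4), which you yourself flag as ``the main obstacle'' and defer to ``explicit estimates in the final write-up''. As sketched it does not close: (A5) only produces, for a point $x$ with $f(x)>q$, a function $g_{x,q}\in\Cx(X)$ with $g_{x,q}(x)>0$ and $\{g_{x,q}>0\}\subset\{f>q\}$, with no lower bound whatsoever on the value $g_{x,q}(x)$; once you rescale $g_{x,q}^+$ so as to sit below $f$, its value at $x$ may be arbitrarily small, so the pointwise supremum of any countable subfamily you select need not equal $f$. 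Moreover the clipping $g_{x,q}\wedge q$ you invoke uses the constant function $q$, which need not belong to $\Cx(X)$ --- exactly the obstruction you correctly identified in your first attempt. So the crucial step of the whole lemma is left unproved, merely relocated.

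The irony is that in your abandoned first attempt you were one observation short of the paper's argument. Take $c<f(x)$, set $F=\{y\colon f(y)\le c\}$, and take $h\in\Cx(X)$ with $0\le h\le 1$, $h|_F=0$, $h(x)=1$ (the paper uses the normalized form of (A2) from the definition of acceptability here). Then $g=ch$ satisfies $0\le g\le f$ \emph{automatically}: $g$ vanishes on $F$, while off $F$ one has $f>c\ge ch=g$. No meet with $f$ and no constant functions are needed, and $g(x)=c$ can be taken as close to $f(x)$ as desired; this is (A2)$\Rightarrow$(A3). The paper then proves (A3)$\Rightarrow$(A4) by a Lindel\"of argument: for each $n$ the sets $U(g,n)=\{x\colon g(x)>f(x)-\frac1n\}$, indexed by $g\in\Cx(X)^+$ with $g\le f$, cover $X$ by (A3); a countable subcover exists for each $n$ since $X$ is second countable, and enumerating the chosen functions and taking finite maxima (legitimate in a sublattice) yields $g_j\nearrow f$. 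Your remaining steps --- (A2)$\Leftrightarrow$(A5), (A4)$\Rightarrow$(A3) (trivial, since (A4) is formally stronger), and (A3)$\Rightarrow$(A2) via $g/g(x)$ --- are all correct. I recommend replacing your route (A5)$\Rightarrow$(A4) by (A5)$\Rightarrow$(A2)$\Rightarrow$(A3)$\Rightarrow$(A4) using the two ingredients above.
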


\begin{proof}
The equivalence (A2)$\Leftrightarrow$(A5) is obvious.

(A2)$\Rightarrow$(A3): Fix $f\in C_b(X)^+$. It is enough to prove that,
given $x\in X$ and $c>0$ such that $f(x)>c$, there is $g\in A(X)^+$ with $0\le g\le f$ and $g(x)\ge c$.
To this end set
$F=\{y\in X\colon f(y)\le c\}$. Then $F$ is closed and $x\notin F$. Hence there is $h\in \Cx(X)$ with $0\le h\le 1$, $h|_{F}=0$ and $h(x)=1$. The choice $g=c h$ does the job.

(A3)$\Rightarrow$(A4)  Fix $f\in C_b(X)^+$ and set $M=\{g\in A(X)^+; g\le f\}$.
It follows that $f=\sup M$. For $g\in M$ and $n\in\mathbb{N}$ let
$$U(g,n)=\{x\in X\colon  g(x)>f(x)-\frac1n\}.$$
For each $n\in\mathbb{N}$ the family
$$\{U(g,n)\colon g\in M\}$$
is an open cover of $X$. It follows that there is a countable subcover, i.e., a countable set $M_n\subset M$ such that
$$\{U(g,n)\colon g\in M_n\}$$
covers $X$. It follows that
$$f=\sup \bigcup_n M_n.$$
Now enumerate $\bigcup_n M_n=\{h_m\colon m\in\en\}$ and set $g_n=\max\{h_1,\dots,h_n\}$ for $n\in\en$.

(A4)$\Rightarrow$(A2)  Let $F\subset X$ be a closed subset and $x\in X\setminus F$. Then there is $f\in C_b(X)$ such that $0\le f\le 1$, $f(x)=1$, $f|_F=0$. Let $(g_n)$ be a sequence in $\Cx(X)^+$ with $g_n\nearrow f$. Then $g_n|_F=0$ for each $n$ and $g_n(x)\nearrow 1$. Hence, there is some $n$ with $g_n(x)>0$. A suitable multiple of $g_n$ does the job.
\end{proof}

\begin{lemma}\label{L:aprox}
Let $\Cx(X)$ be an acceptable subset of $C_b(X)$. Then the following assertions are true.
\begin{itemize}
	\item[(i)] There exists $h\in \Cx(X)$ such that $h>0$ on $X$.
		\item[(ii)] If $u\colon X\to\er^+$ is lsc, then there is a sequence $(g_k)_k$
of functions from $\Cx(X)^+$ such that $g_k\nearrow u$.
\end{itemize}
\end{lemma}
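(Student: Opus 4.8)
For assertion (i), the plan is to combine property (A2) (or rather its reformulation via (A5)) with a separability/Lindel\"of argument. Since $X$ is Polish, it is Lindel\"of, so the open cover $\{\{g>0\}\colon g\in\Cx(X)\}$ — which covers $X$ by (A5) — admits a countable subcover $\{\{g_k>0\}\colon k\in\en\}$. Replacing each $g_k$ by $|g_k|\wedge 1$ (which still lies in $\Cx(X)^+$ by (A1), and has the same positivity set), I may assume $0\le g_k\le 1$. Now set $h=\sum_k 2^{-k}g_k$. The series converges uniformly, so $h\in C_b(X)$; and $h$ is a uniform limit of partial sums, each of which lies in $\Cx(X)$, so $h\in\Cx(X)$ since $\Cx(X)$ is closed. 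Finally $h(x)>0$ for every $x$: any $x$ lies in some $\{g_k>0\}$, whence the $k$-th term is strictly positive.

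For assertion (ii), the plan is to write the lsc function $u$ as a supremum of a countable family drawn from $\Cx(X)^+$ and then take running maxima, mimicking the proof of (A3)$\Rightarrow$(A4) in Lemma \ref{L:Ax}. The first step is the classical fact that a nonnegative lsc function on a metric space is the pointwise supremum of an increasing sequence of bounded continuous functions (e.g.\ the Baire/McShane formula $u_m(x)=\inf_y\bigl(m\wedge u(y)+m\,d(x,y)\bigr)$), so $u=\sup_m f_m$ with $f_m\in C_b(X)^+$ and $f_m\nearrow u$. Next, apply (A4) to each $f_m$ to obtain $g_{m,j}\in\Cx(X)^+$ with $g_{m,j}\nearrow_j f_m$; then $u=\sup\{g_{m,j}\colon m,j\in\en\}$. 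Enumerate this countable family as $(h_\ell)_\ell$ and put $g_k=\max\{h_1,\dots,h_k\}$, which lies in $\Cx(X)^+$ by the lattice property (A1); clearly $g_k\nearrow\sup_\ell h_\ell=u$.

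The only genuinely nontrivial ingredient is the reduction of a general lsc $u$ to an increasing sequence of bounded continuous functions; I expect this to be the main point, though on a metric space it is standard (the inf-convolution formula above does the job, with the truncation at height $m$ ensuring boundedness). Everything else is a routine bookkeeping of countable unions, uniform convergence, and the lattice/closedness properties already built into the notion of an acceptable subspace. One small care point: in (ii) one should check the $g_{m,j}$ are genuinely dominated by $u$ (they are, since $g_{m,j}\le f_m\le u$), so that no truncation beyond what (A4) already provides is needed.
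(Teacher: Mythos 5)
Your proof of (ii) is essentially the paper's: both arguments reduce, via the lattice property, to producing a countable subset of $\Cx(X)^+$ whose pointwise supremum is $u$, and both obtain it by first writing $u$ as an increasing limit of functions $f_m\in C_b(X)^+$ (the paper cites \cite[Proposition 3.4]{LMZ}, you invoke the inf-convolution formula, which proves the same fact) and then applying (A4) to each $f_m$ and taking running maxima. Nothing to add there.

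For (i) you take a genuinely different route: the paper simply applies (A4) to the constant function $1\in C_b(X)^+$ to obtain $h_j\in\Cx(X)^+$ with $h_j\nearrow 1$ (so $0\le h_j\le 1$ automatically) and sets $h=\sum_j 2^{-j}h_j$, whereas you combine (A5) with the Lindel\"of property of the Polish space $X$. Your route is viable, but one step as written is incorrect: you assert that $|g_k|\wedge 1$ lies in $\Cx(X)^+$ ``by (A1)''. Property (A1) only makes $\Cx(X)$ closed under lattice operations between its own elements; the constant $1$ need not belong to $\Cx(X)$, and a sublattice of $C_b(X)$ need not be closed under truncation by a constant. The paper's own example $\Cx((0,1))=\{f|_{(0,1)}\colon f\in C([0,1]),\ f(1)=2f(0)\}$ witnesses this: for $f$ with $f(0)=0.6$, $f(1)=1.2$ the truncation $f\wedge 1$ takes boundary values $0.6$ and $1$, violating the defining condition, and since $(0,1)$ is dense in $[0,1]$ it has no other continuous extension, so $f\wedge 1\notin\Cx((0,1))$. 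The repair is immediate: instead of truncating, normalize by a scalar, e.g.\ replace $|g_k|$ by $|g_k|/(1+\|g_k\|_\infty)$, which belongs to $\Cx(X)^+$ by linearity, is bounded by $1$, and has the same positivity set; the uniform convergence of the series and the closedness of $\Cx(X)$ then give $h\in\Cx(X)$ exactly as you argue. (Or avoid the covering argument altogether by quoting (A4) for the function $1$, as the paper does.)
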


\begin{proof}
(i)   By (A4) there is a sequence $(h_j)$ in $\Cx(X)^+$ such that $h_j\nearrow1$. Set 
$h=\sum_j2^{-j}h_j$.

(ii) Since $\Cx(X)$ is a lattice, it is enough to find a countable subset $M\subset\Cx(X)^+$ with $\sup M=u$. But this is easy using (A4) and the fact that there is a sequence $(f_k)$ in 
$C_b(X)^+$ with $f_k\nearrow u$ 
(see e.g.\ \cite[Proposition 3.4]{LMZ}).
\end{proof}

Let us collect some examples of acceptable subspaces:

\begin{itemize}
\item $C_b(X)$ itself is acceptable, i.e., we may have $\Cx(X)=C_b(X)$.
\item If $X$ is locally compact, then $\Cx(X)=C_0(X)$ is acceptable.
\item If $\gamma X$ is a compactification of $X$, $\Cx(X)$ can consist of the restrictions to $X$ of functions from $C(\gamma X)$.
In particular, if $X$ is locally compact we may consider its one-point compactification $\alpha X$.
\item
If $X$ is embedded into a Polish space $Y$ as an open subset of $Y$,
$\Cx(X)$ can be 
$$
\{f\in C_b(X)\colon \lim_{x\to z}f(x)=0\text{ for each }z\in\partial X\}.
$$
\item
If the topopogy of $X$ is induced by a  metric $d$ and $x_0\in X$ is a fixed point,
$\Cx(X)$ can be the set $$\{f\in C_b(X)\colon \lim_{d(x,x_0)\to\infty}f(x)=0\}.$$
\item If $X=(0,1)$, we may take
$$\Cx(X)=\{f|_{(0,1)}\colon f\in C([0,1]), f(1)=2f(0)\}.$$
\end{itemize}

\begin{rmrk} It is well known and easy to see that any closed subalgebra of $C_b(X)$ is also a sublattice. 
(This is proved in \cite[Lemma 3.2.20]{Eng} under the additional assumption that it contains constant functions.
But this assumption is not necessary as the sequence of polynomials $(w_i)$ chosen in \cite[Lemma 3.2.20]{Eng} 
may be easily adapted to satisfy $w_i(0)=0$ for each $i$.)

 So, any subalgebra satisfying (A2) is an acceptable subspace.
\end{rmrk}

\begin{lemma}\label{L:acceptable}
Let $\Cx(X)$ be an acceptable subspace of $C_b(X)$. Then the following holds.
\begin{itemize}
	\item[(i)] There is a (in general non-metrizable) compactification $K$ of $X$ and a closed linear sublattice $\Cx(K)$ of $C(K)$ separating points of $K$
such that
$$\Cx(X)=\{f|_X\colon f\in \Cx(K)\}.$$
\item[(ii)] If $\Cx(X)$ contains constant functions and $K$ is as in (i), then $\Cx(K)=C(K)$.
\item[(iii)] If $\Cx(X)$ is even a subalgebra of $C_b(X)$ and does not contain constant functions and $K$ is as in (i), then there is some $z\in K\setminus X$ such that
$$\Cx(K)=\{f\in C(K)\colon f(z)=0\}.$$
\item[(iv)] Assume that $\Cx(X)$ does not contain constant functions but there is some $h\in\Cx(X)$ with $\inf_x h(x)>0$.
 Then
$$\Cx_h(X)=\left\{\frac fh\colon f\in \Cx(X)\right\}$$
is an acceptable subspace of $C_b(X)$ containing constant functions.
\end{itemize}
 \end{lemma}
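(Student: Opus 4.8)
The plan is to realize $K$ as the compactification of $X$ associated with the closed subalgebra $\mathcal B$ of $C_b(X)$ generated by $\Cx(X)$ together with the constants. By (A2) the space $\Cx(X)$ separates points of $X$, and by (A5) the sets $\{g>0\}$, $g\in\Cx(X)$, form a base of the topology of $X$; since $\mathcal B$ contains $\Cx(X)$ and consists of bounded continuous functions, $\mathcal B$ too separates points of $X$ and induces its topology. Hence the standard Gelfand-type construction (see e.g.\ \cite[Ch.~3.2]{Eng}) yields a (in general non-metrizable) compactification $K$ of $X$ and an isometric algebra isomorphism of $\mathcal B$ onto $C(K)$ which is simply extension of functions from the dense subset $X$ to $K$. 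Set $\Cx(K)=\{f\in C(K)\colon f|_X\in\Cx(X)\}$. Because the restriction map $C(K)\to C_b(X)$ is an isometric lattice homomorphism, $\Cx(K)$ is a closed linear sublattice of $C(K)$, and $\{f|_X\colon f\in\Cx(K)\}=\Cx(X)$ since each $g\in\Cx(X)\subset\mathcal B$ extends to an element of $C(K)$. Finally, $\Cx(K)$ separates points of $K$: as $\mathcal B$ is generated by $\Cx(X)\cup\{1\}$, the algebra $C(K)$ is the closed subalgebra generated by $\Cx(K)\cup\{1_K\}$, so two distinct points of $K$, being separated by some element of $C(K)$, are separated by a polynomial in finitely many members of $\Cx(K)$ and hence by a single member of $\Cx(K)$. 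This proves (i).

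For (ii), if $\Cx(X)$ contains the constants then $1_K\in\Cx(K)$, so $\Cx(K)$ is a closed sublattice of $C(K)$ containing the constants and separating the points of the compact space $K$; the lattice form of the Stone--Weierstrass theorem (Kakutani--Krein) then forces $\Cx(K)=C(K)$. For (iii), if $\Cx(X)$ is a subalgebra not containing the constants, then $\mathcal B=\Cx(X)\oplus\er\cdot 1$ (a direct sum as $1\notin\Cx(X)$, and closed since $\Cx(X)$ has codimension one in it), and $\Cx(X)$ is the kernel of the character $a+\lambda 1\mapsto\lambda$ of $\mathcal B$, hence a maximal ideal of $\mathcal B$. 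Transported to $C(K)$, $\Cx(K)$ becomes a maximal ideal of $C(K)$, necessarily of the form $\{f\in C(K)\colon f(z)=0\}$ for some $z\in K$. Here $z\notin X$: otherwise every $g\in\Cx(X)$ would vanish at $z$, contradicting (A5). So $z\in K\setminus X$.

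For (iv), multiplication by $1/h$ is a linear homeomorphism of $C_b(X)$ onto itself — both $h$ and $1/h$ are bounded, the latter because $\inf_X h>0$ — so $\Cx_h(X)$ is a closed linear subspace; it is a sublattice since $(f/h)\vee(g/h)=(f\vee g)/h$ and similarly for $\wedge$ (using $h>0$ pointwise); it contains each constant $c=(ch)/h$ with $ch\in\Cx(X)$; and it satisfies (A5), hence is acceptable, because $\{g/h>0\}=\{g>0\}$ for every $g\in\Cx(X)$. The main obstacle is part (i): obtaining the \emph{correct} compactification — one realizing $\Cx(X)$ as the trace of a point-separating sublattice of $C(K)$ — which requires passing through the generated algebra $\mathcal B$ and then verifying that the induced $\Cx(K)$ still separates points of $K$; granted (i), parts (ii)--(iv) are routine consequences of standard facts about $C(K)$ on a compact space.
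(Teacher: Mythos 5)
Your proof is correct and follows essentially the same route as the paper: the same canonical compactification (yours built from the closed algebra generated by $\Cx(X)$ and the constants, the paper's directly from the closure of the evaluation embedding of $X$ in $\er^{\Cx(X)}$ --- these coincide), Stone--Weierstrass for (ii), the identification of $\Cx(K)$ as the kernel of a character of $C(K)$ for (iii), and the observation that division by $h$ is a lattice isomorphism of $C_b(X)$ preserving (A2) for (iv). If anything, you are slightly more explicit than the paper at two points it leaves implicit: that (ii) requires the lattice (Kakutani--Krein) form of Stone--Weierstrass, since $\Cx(K)$ is only known to be a sublattice, and the verification that $\Cx(K)$ actually separates the points of $K$.
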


\begin{proof}
(i) This follows by the standard construction of compactifications,
namely
 $K$ is the closure in $\er^{\Cx(X)}$ of the canonical copy of $X$ formed by evaluation functionals.

(ii) This follows from the Stone-Weierstrass theorem.

(iii)  Observe that $\operatorname{span}(\Cx(K)\cup\{1\})$ is a closed subalgebra of $C(K)$ separating points of $K$. Hence, by the Stone-Weierstrass theorem we get that $\operatorname{span}(\Cx(K)\cup\{1\})=C(K)$. It follows that $\Cx(K)$ is a hyperplane in $C(K)$, i.e., it is the kernel of a functional $\varphi\in C(K)^*$. Since $1\notin\Cx(K)$, we may assume without loss of generality that $\varphi(1)=1$. In this case, using the fact that
$\Cx(K)$ is also a subalgebra, it is easy to check that $\varphi$ is multiplicative. Hence $\varphi$ is represented by a Dirac measure $\delta_z$ for some $z\in K$ (cf. \cite[Example 11.13(a)]{rudin})  . It follows from (A2) that $z\in K\setminus X$.

(iv) Note that the mapping $f\mapsto \frac fh$ is an isomorphism of $C_b(X)$ onto itself which is also a lattice homomorphism. It follows that $\Cx_h(X)$ is a closed linear sublattice of $C_b(X)$. Moreover, the validity of condition (A2) is clearly preserved.
\end{proof}

If $\Cx(X)$ is an acceptable subspace of $C_b(X)$, then, in particular, it separates points of $X$.
Hence $\M(X)$ can be considered as a subspace of $\Cx(X)^*$,
so we can equip $\M(X)$ with the restricted weak* topology of $\Cx(X)^*$,
which will be denoted by $\taux$. 
The above-defined topologies $\tau_b$ and $\tau_0$ are special cases.
We restrict the topologies $\taux$ to the positive cone
$\M^+(X)$ of $\M(X)$. 
The duality pairing of a measure $\mu\in\M(X)$ and a function $g\in \Cx(X)$
is denoted by $\langle\mu,g\rangle$.
The following proposition summarizes basic properties of these topologies.

\begin{prop}\label{P:tauA} 
Let $X$ be a Polish space and $\Cx(X)$ be an acceptable subspace of $C_b(X)$.
Let $P(X)$ be the set of probability measures on $X$.
Then the following assertions hold.
\begin{itemize}
	\item[(a)] $(\M^+(X),\tau_b)$ is a Polish space. It is $\sigma$-compact if and only if $X$ is compact.
	\item[(b)] If $\Cx(X)$ contains constant functions, then $\taux=\tau_b$ on $\M^+(X)$.
	\item[(c)] If $\Cx(X)$ contains a function $h$ with $\inf h(X)>0$, then $\taux=\tau_b$ on $\M^+(X)$.
	\item[(d)] $\taux=\tau_b$ on $P(X)$.
	\item[(e)]  $(\M^+(X),\taux)$ is metrizable if and only if there is a function $h\in\Cx(X)^+$ with $\inf h(X)>0$.
	\item[(f)] Any $\taux$-compact subset of $\M^+(X)$ is metrizable.
	\item[(g)] If $X$ is locally compact, then $(\M^+(X),\tau_0)$ is $\sigma$-compact.
		Moreover, $\{\mu\in \M^+(X)\colon \mu(X)\le c\}$ is metrizable in $\tau_0$ for each $c>0$.
		
		The whole space $\M^+(X)$ is metrizable in $\tau_0$ if and only if $X$ is compact.
	\item[(h)] Any $\tau_b$-open subset of $(\M^+(X),\tau_b)$ is $\taux$-$\F_\sigma$. Hence, $\tau_b$-Borel sets and $\taux$-Borel sets in $\M^+(X)$ coincide.
\end{itemize}
\end{prop}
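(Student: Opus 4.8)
\emph{Proof strategy.} The argument rests on two facts used throughout. First, $\taux\subseteq\tau_b$ always, since $\taux$ is the restriction to a subspace of a weak$^*$ topology over $\Cx(X)\subseteq C_b(X)$. Secondly, applying Lemma~\ref{L:aprox}(ii) to the lower semicontinuous function $\mathbf 1_G$, for every open $G\subset X$ there are $g_k\in\Cx(X)^+$ with $g_k\nearrow\mathbf 1_G$, so by monotone convergence $\mu(G)=\sup_k\langle\mu,g_k\rangle$, whence $\mu\mapsto\mu(G)$ is $\taux$-lower semicontinuous on $\M^+(X)$ (take $G=X$ for $\mu\mapsto\mu(X)$). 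Given these, (b) follows from Lemma~\ref{L:taub}(i): $\tau_b$ is the weakest topology in which $\mu\mapsto\mu(X)=\langle\mu,\mathbf 1\rangle$ is continuous (it is $\taux$-continuous because $\mathbf 1\in\Cx(X)$) and $\mu\mapsto\mu(G)$ is lsc for open $G$ (so in $\taux$), hence $\tau_b\subseteq\taux$. For (c) I would pass to the acceptable space $\Cx_h(X)$ of Lemma~\ref{L:acceptable}(iv), which contains constants, so $\tau_{\Cx_h(X)}=\tau_b$ by (b); the multiplication map $\Phi\colon\mu\mapsto h\mu$ is a self-homeomorphism of $(\M^+(X),\tau_b)$ (because $hf,h^{-1}f\in C_b(X)$ for $f\in C_b(X)$) and, since $g\mapsto hg$ bijects $\Cx_h(X)$ onto $\Cx(X)$ with $\langle h\mu,g\rangle=\langle\mu,hg\rangle$, also a homeomorphism of $(\M^+(X),\taux)$ onto $(\M^+(X),\tau_{\Cx_h(X)})$; therefore $\taux=\Phi^{-1}(\tau_{\Cx_h(X)})=\Phi^{-1}(\tau_b)=\tau_b$. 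For (d), on $P(X)$ the map $\mu\mapsto\mu(X)$ is constant, so by Lemma~\ref{L:taub}(i) $\tau_b|_{P(X)}$ is the weakest topology making every $\mu\mapsto\mu(G)$ lsc; all these are $\taux|_{P(X)}$-lsc, giving $\tau_b|_{P(X)}\subseteq\taux|_{P(X)}\subseteq\tau_b|_{P(X)}$.

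For the structural statements: $(\M^+(X),\tau_b)$ is Polish by the classical theory of weak convergence of measures on Polish spaces (e.g.\ identify $\M^+(X)$ with the closed set $\{(t,\nu)\colon\nu(\{\star\})=(1+t)^{-1}\}\subset[0,\infty)\times P(X\sqcup\{\star\})$, $X\sqcup\{\star\}$ adjoining an isolated point, via $\mu\mapsto(\mu(X),(1+\mu(X))^{-1}(\mu+\delta_\star))$, using that $P$ of a Polish space is Polish). If $X$ is compact then $C(X)$ is separable and $\M^+(X)=\bigcup_n\{\mu\colon\mu(X)\le n\}$ is a countable union of weak$^*$-compact metrizable balls, so it is $\sigma$-compact. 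If $X$ is not compact, choose a countably infinite closed discrete $D\subset X$; the $\tau_b$-lower semicontinuity of $\mu\mapsto\mu(X\setminus D)$ shows $\M^+(D)=\{\mu\colon\mu(X\setminus D)=0\}$ is $\tau_b$-closed in $\M^+(X)$, and by Lemma~\ref{L:taub}(ii) it is a topological subspace; since $D$ is discrete, $\M^+(D)$ is homeomorphic to $\ell^1_+$ with its weak topology, which is not $\sigma$-compact (weakly compact subsets of $\ell^1$ are norm compact by the Schur property, while $\ell^1_+$, being nowhere locally compact, is not norm-$\sigma$-compact by Baire category), so neither is $\M^+(X)$. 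This proves (a); then (e)$\Leftarrow$ follows from (c) and (a). For (g), when $X$ is locally compact $C_0(X)$ is a separable Banach space with $C_0(X)^*=\M(X)$, the positive cone is weak$^*$-closed and $\|\mu\|=\mu(X)$, so $\{\mu\colon\mu(X)\le c\}$ is weak$^*$-compact and weak$^*$-metrizable by separability; hence $\M^+(X)=\bigcup_n\{\mu(X)\le n\}$ is $\tau_0$-$\sigma$-compact, and $(\M^+(X),\tau_0)$ is metrizable only if, by (e), some $h\in C_0(X)^+$ has $\inf h>0$, i.e.\ only if $X$ is compact. For (f), fix a countable base of $X$ closed under finite unions and, for each member $B$, a sequence $g^{(B)}_k\in\Cx(X)^+$ with $g^{(B)}_k\nearrow\mathbf 1_B$ (as above); let $D_0$ collect all these functions. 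Since each $\mu\in\M^+(X)$ is Radon, it is determined by the values $\mu(B)=\lim_k\langle\mu,g^{(B)}_k\rangle$, so $D_0$ separates points of $\M^+(X)$ and the topology $\tau_{D_0}$ generated by $\mu\mapsto\langle\mu,g\rangle$, $g\in D_0$, is a coarser Hausdorff metrizable topology; on a $\taux$-compact $K$ the identity $(K,\taux)\to(K,\tau_{D_0})$ is a continuous bijection from a compact onto a Hausdorff space, hence a homeomorphism, so $K$ is metrizable.

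It remains to prove (e)$\Rightarrow$ and (h). For (e)$\Rightarrow$: if $(\M^+(X),\taux)$ is metrizable, $0$ has a countable neighbourhood base, which (using that $\Cx(X)$ is a lattice) one may take of the form $U_n=\{\mu\in\M^+(X)\colon\langle\mu,f\rangle<\ep_n\ \forall f\in F_n\}$ with $F_n\subset\Cx(X)^+$ finite, and, after enlarging the $F_n$, with $h_0\in F_n$ for the strictly positive $h_0$ of Lemma~\ref{L:aprox}(i). Put $v_n=\max F_n\in\Cx(X)^+$ and $h=\sum_n2^{-n}v_n/(1+\|v_n\|_\infty)\in\Cx(X)^+$; then $h>0$. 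Assuming $\inf h=0$, choose $x_j$ with $h(x_j)\to0$; these have no cluster point (there $h$ would be $>0$), so after deleting repetitions and thinning so that $h(x_j)\le2^{-4j}$, the set $\{x_j\}$ is closed discrete and (A2) gives $g_j\in\Cx(X)^+$ with $g_j(x_j)=1$, $g_j(x_i)=0$ $(i\ne j)$. With $t_j=h(x_j)^{-1/2}\to\infty$: for $f\in F_n$, $t_jf(x_j)\le t_jv_n(x_j)\le2^n(1+\|v_n\|_\infty)h(x_j)^{1/2}\to0$, so $t_j\delta_{x_j}\in U_n$ eventually, hence $t_j\delta_{x_j}\to0$ in $\taux$; yet $g=\sum_jh(x_j)^{1/4}g_j\in\Cx(X)^+$ gives $\langle t_j\delta_{x_j},g\rangle=h(x_j)^{-1/4}\to\infty$, contradicting $\taux$-continuity of $\mu\mapsto\langle\mu,g\rangle$. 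Hence $\inf h>0$. For (h): by Lemma~\ref{L:taub}(i) and second countability of $X$, $\tau_b$ has the countable subbase $\{\mu(X)>p\}$, $\{\mu(X)<q\}$, $\{\mu(B_i)>a\}$ ($p,q,a\in\mathbb Q$; $B_i$ from a countable base closed under finite unions). Writing $\mu(X)=\sup_j\langle\mu,h_j\rangle$ and $\mu(B_i)=\sup_k\langle\mu,g^{(B_i)}_k\rangle$ as increasing suprema of $\taux$-continuous functions, the sets $\{\mu(X)>p\}=\bigcup_{j,m}\{\langle\mu,h_j\rangle\ge p+\tfrac1m\}$ and likewise $\{\mu(B_i)>a\}$ are countable unions of $\taux$-closed sets, while $\{\mu(X)<q\}=\bigcup_m\{\mu(X)\le q-\tfrac1m\}$ with each $\{\mu(X)\le c\}$ $\taux$-closed (as $\mu\mapsto\mu(X)$ is $\taux$-lsc); since finite intersections and countable unions of $\F_\sigma$ sets are $\F_\sigma$, every $\tau_b$-open set is $\taux$-$\F_\sigma$, hence $\taux$-Borel, and with $\taux\subseteq\tau_b$ the two Borel $\sigma$-algebras coincide.

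The step I expect to be the main obstacle is the forward implication of (e): distilling a uniformly positive $h$ from a merely-assumed countable neighbourhood base at $0$, and then manufacturing both the net $t_j\delta_{x_j}\to0$ in $\taux$ and a single test function $g\in\Cx(X)$ witnessing its non-convergence, where (A2) and the precise structure of $h$ must be used in tandem. The failure of $\sigma$-compactness in (a) (via $\M^+(D)\cong(\ell^1_+,\mathrm{weak})$ and the Schur property) and the construction of the point-separating family in (f) are the other places where a genuine, if short, argument is required rather than a formal manipulation.
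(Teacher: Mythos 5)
Your proof is correct, but in several of the sub-items you take a genuinely different route from the paper. For (b) the paper passes through the compactification $K$ of Lemma~\ref{L:acceptable}(ii) and Stone--Weierstrass, whereas you argue directly from Lemma~\ref{L:taub}(i) plus the $\taux$-lower semicontinuity of $\mu\mapsto\mu(G)$; conversely, for (c) the paper argues directly (showing $\mu\mapsto\mu(X)$ is $\taux$-upper semicontinuous via $h-g_n\searrow 1$ and monotone convergence), while you reduce (c) to (b) through the multiplication homeomorphism $\mu\mapsto h\mu$ and the renormalized space $\Cx_h(X)$ of Lemma~\ref{L:acceptable}(iv) --- a slicker reduction which, incidentally, puts part (iv) of that lemma to actual use. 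In (a) you rule out $\sigma$-compactness of $(\ell^1_+,\mathrm{weak})$ via the Schur property and Baire category, where the paper deduces reflexivity of $\ell_1$; both are standard. For (e), ``only if,'' your argument has the same skeleton as the paper's (scaled Dirac masses $t_j\delta_{x_j}$ entering every member of a countable neighbourhood base of $0$ yet detected by a single test function built as a norm-convergent series), but you obtain the separating functions from (A2) on a closed discrete set and use a two-exponent trick $h^{1/2}$ versus $h^{1/4}$, while the paper applies (A3) to the constants $2^{-k}$; your version works, though it is slightly more elaborate. For (f) the paper invokes the network-weight theorem (a compact space with a countable network is metrizable), whereas you build a countable point-separating subfamily of $\Cx(X)$ and use that a continuous bijection from a compact space onto a Hausdorff one is a homeomorphism --- more hands-on and self-contained. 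For (h) the paper decomposes the basic sets $U_{\mu_0,g}$ using (A4), while you exploit the Tops\o e-type subbase of Lemma~\ref{L:taub}(i) restricted to a countable base of $X$; your reduction to a countable subbase (via $\mu(G)=\sup_B\mu(B)$ over finite unions of basic sets) is correct but stated rather compressively. The only cosmetic omission is the trivial ``if'' direction of the last claim in (g), which follows from (a) and (b) since $\tau_0=\tau_b$ when $X$ is compact.
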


\begin{proof}
(a) The first statement is well known (see, e.g., \cite[Appendix, Theorem 7]{Schwartz} 
or \cite[Corollary (a)]{HoKa}). 

If $X$ is compact, then $(M_+(X),\tau_b)$ is $\sigma$-compact by Banach-Alaoglu theorem.

If $X$ is not compact, then there is a sequence $(x_n)$ in $X$ with no cluster point. Then $F=\{x_n\colon n\in\en\}$ is a closed subset of $X$, thus 
$\M^+(F)$ is a closed subset of $\M^+(X)$ (this follows easily from Lemma~\ref{L:taub}(i)). By Lemma~\ref{L:taub}(ii) the subspace topology on $\M_+(F)$ coincide with its $\tau_b$-topology. Since $F$ is a countable discrete set, $(\M^+(F),\tau_b)$ is homeomorphic to the positive cone $\ell_1^+$ of the Banach space $\ell_1$ equipped with the weak topology (note that $C_b(\en)=\ell_\infty=\ell_1^*$). This cone is not $\sigma$-compact as, otherwise $\ell_1=\ell_1^+-\ell_1^+$ would be also weakly $\sigma$-compact and hence $\ell_1$ would be reflexive, which is not the case.

(b) Assume $\Cx(X)$ contains constant functions.
 It follows from Lemma~\ref{L:acceptable}(ii) that there is a compactification $K$ of $X$ such that on $\M^+(X)$ the topology $\taux$ coincides with the weak$^*$ topology inherited from $C(K)^*$. But this one coincides with $\tau_b$ by Lemma~\ref{L:taub}.

(c) It easily follows from Lemma~\ref{L:aprox}(ii) that $\mu\mapsto\mu(U)$ is $\taux$-lower semicontinuous for each open set $U\subset X$. Further, by the assumption there is $h\in \Cx(X)$ with $h\ge 1$.  It follows from (A4) that there is a sequence $(g_n)$ in $\Cx(X)^+$ with $g_n\nearrow h-1$. Hence
$$ h\ge h-g_n\searrow 1.$$
By the monotone convergence theorem we get 
$$\langle \mu,h-g_n\rangle\searrow \langle \mu,1\rangle =\mu(X).$$
Thus the function $\mu\mapsto \mu(X)$ is $\taux$-upper semi-continuous. Since $X$ is also an open set, we deduce that $\mu\mapsto \mu(X)$ is $\taux$-continuous.

Hence, using Lemma~\ref{L:taub}(i) we see that $\taux$ is finer than $\tau_b$ on $\M^+(X)$. Since it is clearly simultaneously weaker that $\tau_b$, we conclude that $\taux=\tau_b$ on $\M^+(X)$.

(d) It is clear that $\taux$ is weaker than $\tau_b$. Conversely, since $\mu(X)=1$ for each $\mu\in P(X)$, it follows from  Lemma~\ref{L:taub}(i) that the sets
$$\{\mu\in P(X)\colon \mu(U)>c\},\quad c\in\er, U\subset X\mbox{ open},$$
form a subbase of the topology $\tau_b$ restricted to $P(X)$. But these sets are also $\taux$-open as, in the same way as in the proof of (c), we deduce from Lemma~\ref{L:aprox}(ii) that $\mu\mapsto\mu(U)$ is $\taux$-lower semicontinuous for each open set $U\subset X$. This completes the proof.

(e) The `if part' follows from assertions (c) and (e). The `only if part' will be proved by contradiction. To this end assume that $\inf f(X)=0$ for each $f\in \Cx(X)^+$ but $(\M^+(X),\taux)$ is metrizable. Then there exist a countable base of neighborhoods of zero. Hence we may find functions
$0\le f_1\le f_2\le \dots$ in $\Cx(X)$ such that 
$$
V_k:=\{\mu\in\M^+(X)\colon \mu(f_k)<1\},\quad k\in\en
$$
form a base of neighborhoods of the origin.

Fix $k\in\en$. Since $\inf f_k(X)=0$, we find $x_k\in X$ with $f_k(x_k)<2^{-k}$.
Applying condition (A3) to the constant function $2^{-k}$ we find $g_k\in\Cx(X)$ with
$0\le g_k\le 2^{-k}$ with $g_k(x_k)>f_k(x_k)$.
 
	Set 
$$
g=\sum_{k=1}^\infty g_{k}.
$$
Then $g\in \Cx(X)^+$ and thus 
$$
V:=\{\mu\in\M^+(X)\colon \langle\mu,g\rangle<1\}
$$
is a $\taux$-neighborhood of $0$.

Given $k\in\en$, we may choose $a_k\in\er$ such that $f_k(x_k)<a_k<g(x_k)$.
Then
$\delta_{x_k}/a_k\in V_k\setminus V$.
Hence $V_k\setminus V\ne \emptyset$ for each $k\in\en$,
a contradiction. 

(f) Since $(\M^+(X),\tau_b)$ is a Polish space, its topology has a countable base. Since $\taux$ is weaker than $\tau_b$, the base of $\tau_b$ is a network for $\taux$ (cf. \cite[p. 127]{Eng}). It is well known that any compact space with countable network has even a countable base (see \cite[Theorem 3.1.19]{Eng}) an hence it is metrizable by \cite[Theorem 4.2.8]{Eng}.

(g) The compactness of $\{\mu\in\M^+(X)\colon\mu(X)\le c\}$ follows from Banach-Alaoglu theorem. 
This set is metrizable by assertion (f) and $\sigma$-compactness of the whole space is obvious.

If $X$ is compact, then $\tau_0=\tau_b$, hence $(\M^+(X),\tau_0)$ is metrizable by assertion (a). If $X$ is not compact, the non-metrizability follows from assertion (e).

(h) A base of $(M^+(X),\tau_b)$ is formed by sets of type 
$$
U_{\mu_0,g}:=\{\mu\in M^+(X)\colon |\langle\mu-\mu_0,\;g\rangle| <1\},\qquad \mu_0\in M^+(X),\;g\in C_b(X)^+.
$$
Fix $\mu_0$ and $g$ as above 
and use property (A4) to find a sequence $g_j\in \Cx(X)^+$ such that $g_j\nearrow g$.
Then 
$$
U_{\mu_0,g}=\bigcup _{k=1}^{\infty}\bigcap_{j\ge k}\Big\{\mu\colon |\langle\mu-\mu_0,\;g_j\rangle| \le 1-\frac1k\Big\},
$$ 
so $U_{\mu_0,g}$ is $\taux$-$\F_\sigma$. Since $(\M^+(X),\tau_b)$ is a Polish space, any open set is a countable union of basic open sets, hence it is also $\taux$-$F_\sigma$.

It follows that any $\tau_b$-Borel set is $\taux$-Borel. The converse implication follows from the fact that the topology $\taux$ is weaker than $\tau_b$.
\end{proof}

\section{Moduli  of families of measures}\label{sec:moduli}

In this section $X$ will be again a Polish space  and $\Cx(X)$ a fixed acceptable subspace of $C_b(X)$. In addition, we assume that $X$ is equipped with a \textit{reference measure} $\meas$
which is a non-negative Radon measure (not necessarily finite). 

\begin{definition}
Let $\E$ be a  subfamily of $\M^+(X)$. 
We say that a lsc function $\rho\colon X\to [0,\infty]$ 
is an \textit{admissible function} for $\E$ if 
$$
\int_X\rho\,d\mu \ge 1\text{ for each }\mu\in \E.
$$
Let $p\in [1,\infty)$. 
We define the \textit{$L^{p}$-modulus} of $\E$ 
and its continuous version as 
$$
\aligned
M_p(\Gamma)&=\inf \Bigl\{\|\rho\|_{L^{p}(\meas)}^p\colon \rho \text{ is 
admissible for }\E\Bigr\},\\
M_{p,c}^A(\Gamma)&=\inf \Bigl\{\|\rho\|_{L^{p}(\meas)}^p\colon 
\rho\in \Cx(X) \text{ is admissible for }\E\Bigr\}.
\endaligned
$$
\end{definition}

\begin{definition}[\cite{M1}]
Let $\E$ be a  subfamily of $\M^+(X)$. 
We say that a sequence of lsc functions $(\rho_j)_j$, $\rho_j\colon X\to [0,\infty]$ 
is an \textit{admissible sequence} for $\E$ if 
$$
\liminf_j\int_X\rho_j\,d\mu \ge 1\text{ for each }\mu\in \E.
$$
Let $p\in [1,\infty)$. 
We define the \textit{$L^p$-approximation 
modulus} of $\E$ and its continuous version as 
$$
\aligned
AM_p(\E)&=\inf \Bigl\{\liminf_{j}\|\rho_j\|_{L^p(\meas)}^p\colon (\rho_j)_j \text{ 
is admissible for }\E\Bigr\},\\
AM_{p,c}^A(\E)&=\inf \Bigl\{\liminf_{j}\|\rho_j\|_{L^p(\meas)}^p\colon 
 (\rho_j)_j\,\subset\,\Cx(X) 
 \text{ 
is admissible for }\E
\Bigr\}.
\endaligned
$$
If $p=1$, we simplify $AM_{1}$ to $AM$ and $AM_{1,c}^A$ to $AM_c^A$.
\end{definition}

\begin{definition}
Most frequently, the concept of modulus is used on families
of paths. If we speak on paths, the topology on $X$ is induced by a fixed metric, so
that the length of a curve  $\ff\colon [a,b]\to X$ makes sense. A path $\ff\colon [a,b]\to X$ is a
non-constant curve of finite length. Then $\ff$ can be parametrized by arc length,
the reparametrization it is of the form $\gamma=\ff\circ h$, where $h\colon [0,\ell]\to[a,b]$ 
is a suitable increasing homeomorphism and $\ell$ is the total length of the path $\ff$.
The curvelinear integral of a function $\rho\colon X\to\er$ over $\ff$ is then defined as
$$
\int_{\ff}\rho\,ds=\int_0^{\ell}\rho(\gamma(t))\,dt
$$
whenever the integral on the right makes sense. 
A path $\ff$ induces a finite
Radon measure $\mu_{\ff}$ defined as
$$
\mu_{\ff}(E)=\int_{\ff}\chi_{E}\,ds,\qquad E\subset X \text{ Borel}.
$$
It is the push-forward of the Lebesgue measure under the arc-lenght reparametrization of $\ff$. 
The modulus of a family of paths is then defined as the modulus of the family of induced measures.
Note that one Radon measure can be induced by different paths even if we insist on arc-length parametrization,
as we do not require the paths being injective. By definition, the modulus treats only the resulting measures.
\end{definition}

\begin{rmrk}
 Due to the motivation, we point out if
an example of any phenomena can be demonstrated on paths.
However, note that even when dealing with paths we use 
the $\taux$-topology
on $\M^+(X)$, 
which differs from
topologies used usually on families of paths, see
e.g.\ \cite{Sav}. In a correspondence with \cite{ADS}
we prefer an approach based on the topology on measures.
It would be interesting to develop a parallel research
on paths and discuss the dependence of situation on the choice of topologies.
\end{rmrk}

\begin{rmrk}  
The $AM$-modulus 
on paths
is related to $BV$ spaces, see \cite{HEMM2}.
If $u$ is a precisely represented $BV$ function on $\rn$,
then $u\circ \ff$ is $BV$ on $AM$- almost every path $\ff$.
This is not true for the $M_1$ modulus.

Using the related notion of \textit{approximation upper gradient},
which is a sequence $g_j$ of positive Borel functions on $X$ such that 
$$
\operatorname{osc}_{\ff}u\le \liminf_j\int_{\ff}g_j\,ds
$$
for $AM$-almost every path,
Martio \cite{M1} introduced a version of $BV$ space on a metric measure space
$X$.

Very recently, Durand-Cartagena, Eriksson-Bique,
Korte and Shanmugalingam \cite{DCEBKS} proved that, under the assumptions
that the reference measure $\meas$ is doubling and $(X,\meas)$
supports the 1-Poincar\'e inequality, the Martio $BV$ space
coincides with the $BV$ space introduced by Miranda Jr. \cite{Mi} 
\end{rmrk}

\begin{rmrk}\label{r:reflexive}
It is clear that $AM_p\le M_p$.
In [15], we show that $AM_p=M_p$ for $1<p<\infty$ and, on the other hand,
present examples of path families $\Gamma$ with $AM(\Gamma)<M_1(\Gamma)$
\end{rmrk}

The following example shows that the modulus $M_{p,c}^A$ does depend on $\Cx(X)$.

\begin{example}\label{ex:MpneMp}
(1) Assume that $X=(0,1)$ and $\meas$ is the Lebesgue measure. Let $E\subset (0,1)$ be arbitary and
$\E=\{\delta_x\colon x\in E\}$.

Let $\Cx(X)=C_b(0,1)$ and $\Cx'(X)=C_0(0,1)$. Then it is easy to see that $M_{p,c}^A(\E)=\meas^*(E)$. In particular, if $E$ is countable, then $M_{p,c}^A(\E)=0$.
On the other hand, if  $\sup E=1$, then there is no admissible function for $\E$ in $\Cx'(X)$, hence $M_{p,c}^{A'}(\E)=+\infty$. Clearly, such $E$ may be countable.

(2) A similar example may be done using paths. It is enough to take $X=(0,1)\times[0,1]$, $\meas$ to be the Lebesgue measure and to replace Dirac measures by the length measures on $\{t\}\times[0,1]$, $t\in(0,1)$.
\end{example} 

The situation for $AM$ moduli is different. By the following theorem $AM_{p,c}^A$ does not depend on $\Cx(X)$ and, moreover, is equal to $AM_p$.

\begin{thm}\label{t:AM=AM} 
Let $p\ge 1$. Then $AM_{p}=AM_{p,c}^A$.
\end{thm}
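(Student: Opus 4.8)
The inequality $AM_p\le AM_{p,c}^A$ is immediate, because every $g\in\Cx(X)^+$ is bounded and continuous, hence lower semicontinuous, so each admissible sequence $(\rho_j)_j\subset\Cx(X)^+$ is in particular an admissible sequence of lsc functions and the infimum defining $AM_p$ runs over a wider class. The plan is therefore to prove the reverse inequality $AM_{p,c}^A(\E)\le AM_p(\E)$ by manufacturing, out of a near-optimal admissible sequence of lsc functions, an admissible sequence in $\Cx(X)^+$ of essentially the same $L^p(\meas)$-cost. First I would fix $\ep>0$, choose an admissible sequence $(\rho_j)_j$ of lsc functions with $\liminf_j\|\rho_j\|_{L^p(\meas)}^p<AM_p(\E)+\ep$, pass to a subsequence so that $\|\rho_j\|_{L^p(\meas)}^p\to L\le AM_p(\E)+\ep$ (admissibility is inherited by subsequences, since a subsequence has a larger $\liminf$), and replace each $\rho_j$ by $(1+\ep)\rho_j$, so that in addition for every $\mu\in\E$ there is a finite index $j_0(\mu)$ with $\int_X\rho_j\,d\mu>1$ whenever $j\ge j_0(\mu)$. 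Then, for each $j$, Lemma~\ref{L:aprox}(ii) provides a sequence $(g_{j,k})_k$ in $\Cx(X)^+$ with $g_{j,k}\nearrow\rho_j$ pointwise; by monotone convergence $\int_X g_{j,k}\,d\mu\nearrow\int_X\rho_j\,d\mu$ for every finite measure $\mu$ and $\|g_{j,k}\|_{L^p(\meas)}^p\nearrow\|\rho_j\|_{L^p(\meas)}^p$, and, since $\Cx(X)$ is a lattice, finite maxima of the $g_{j,k}$ again belong to $\Cx(X)^+$.

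The core of the argument is to amalgamate the doubly-indexed family $(g_{j,k})_{j,k}$ into a single sequence $(h_m)_m\subset\Cx(X)^+$ that is admissible and $L^p(\meas)$-cheap. The point that makes admissibility transferable is that it only requires a $\liminf$: if I arrange the ``first index'' $j$ used at step $m$ to eventually include every fixed value, while the ``second index'' $k$ used at step $m$ tends to $\infty$, then for a fixed $\mu\in\E$ I set $j_1:=j_0(\mu)$ and pick the \emph{fixed} threshold $k_1$ with $\int_X g_{j_1,k}\,d\mu>1-\ep$ for $k\ge k_1$; as soon as step $m$ is large enough that $j_1$ is among the first indices in play and the second index exceeds $k_1$, the term $h_m$ dominates some $g_{j_1,k}$ with $k\ge k_1$, so $\int_X h_m\,d\mu>1-\ep$, whence $\liminf_m\int_X h_m\,d\mu\ge1-\ep$ and a final rescaling makes $(h_m)$ genuinely admissible. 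If simultaneously each $h_m$ can be kept dominated by a single $\rho_j$ with $j$ in the tail where $\|\rho_j\|_{L^p(\meas)}^p$ is close to $L$, then $\liminf_m\|h_m\|_{L^p(\meas)}^p\le L\le AM_p(\E)+\ep$, and letting $\ep\to0$ closes the proof.

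The hard part will be reconciling these two demands. Forcing the ``first index exhausts every value'' clause in the crude way --- letting $h_m$ be the maximum of the $g_{j,k(m)}$ over a range of $j$ that grows with $m$ --- only bounds $h_m$ by $\max_j\rho_j$, whose $L^p(\meas)$-norm blows up, so norm control is destroyed. To get around this I would not diagonalise the $\rho_j$ directly but first pass to an approximating competitor with extra structure: truncate each $\rho_j$ and use its layer-cake representation, approximating each open superlevel set $\{\rho_j>t\}$ by the continuous inner-distance functions $x\mapsto\min(1,k\operatorname{dist}(x,\{\rho_j\le t\}))$ --- which are lsc, hence by Lemma~\ref{L:aprox} themselves increasing limits of functions from $\Cx(X)^+$, increase to $\chi_{\{\rho_j>t\}}$, and equal $1$ on a set filling up $\{\rho_j>t\}$. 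Because such functions are bounded by $1$, the maxima entering the construction are controlled by indicators of open sets whose $\meas$-measure stays close to $L$ along the tail, so the norm blow-up disappears while the required domination of one fixed good approximant per $\mu$ survives. Making this bookkeeping work uniformly over an arbitrary, possibly uncountable, family $\E$ --- where no single rate of convergence $\int_X g_{j,k}\,d\mu\to\int_X\rho_j\,d\mu$ can be valid for all $\mu$ at once --- is the genuine obstacle, and is where I expect the real work to lie.
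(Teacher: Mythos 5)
Your reduction to the reverse inequality and your use of Lemma~\ref{L:aprox}(ii) match the paper's starting point, and you have correctly isolated the central difficulty: to make the amalgamated sequence admissible, the first index must eventually revisit every value of $j$, yet taking maxima of approximants over a growing range of $j$ is dominated only by $\max_j\rho_j$ (or, in your layer-cake variant, by $\chi_{\bigcup_j\{\rho_j>t\}}$), whose $L^p(\meas)$-cost can be as large as a sum over $j$. Your proposed fix does not close this: truncating and replacing superlevel sets by distance-function approximants still leaves you with the union over $j$, and $\meas\bigl(\bigcup_{j\le m}\{\rho_j>t\}\bigr)$ need not stay anywhere near $\meas(\{\rho_m>t\})$ (take $\rho_j$ essentially supported on pairwise disjoint sets of measure one, so each norm is $1$ but the union has measure $m$). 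You also concede at the end that the uniformity over an uncountable $\E$ is unresolved. So the proposal diagnoses the obstacle but does not overcome it; this is a genuine gap.

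The paper's device is to use \emph{sums of increments} rather than maxima. After arranging, via dominated convergence and extraction, that $\|u_{j,i}-\rho_j\|_p\le 2^{-j-i}\ep$ for approximants $u_{j,i}\nearrow\rho_j$ in $\Cx(X)^+$, one enumerates all the nonnegative increments $u_{j,i+1}-u_{j,i}$ into a single sequence $(w_i)_i$ with $\sum_i\|w_i\|_p\le\ep$ and sets $g_j=u_{j,1}+w_1+\dots+w_j$; then $\|g_j\|_p\le\|\rho_j\|_p+\ep$ with no blow-up, since the total correction is $\ep$-small in norm no matter how many terms are included. Admissibility for a fixed $\mu\in\E$ is recovered with no uniformity needed: setting $t=\liminf_j\int_X u_{j,1}\,d\mu$, one picks a single index $m$ with $\int_X u_{m,1}\,d\mu<t+\delta$ and $\int_X\rho_m\,d\mu>1-\delta$; the telescoping identity shows that the increments of that one column alone contribute more than $1-t-2\delta$ to the partial sums once enough of them appear among $w_1,\dots,w_l$, while $\int_X u_{j,1}\,d\mu>t-\delta$ for all large $j$, so $\int_X g_j\,d\mu\ge 1-3\delta$ for $j\ge l$. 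The choice of $m$ realizing the $\liminf$ is the idea you are missing: it guarantees that the ``missing mass'' $1-t$ is carried entirely by increments, all of which are eventually swept into every $g_j$ because the partial sums are monotone in the included terms.
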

\begin{proof} Choose $\E\subset\M^+(X)$.
Obviously $AM_p(\E)\le AM_{p,c}^A(\E)$. For the converse inequality
we may assume that $AM_p(\E)<\infty$. 
Let $(\rho_j)_j$ be an admissible sequence for $\E$ consisting of 
lower semicontinuous $L^p$ functions.

By Lemma~\ref{L:aprox}(ii) there is, for each $j\in\en$, a sequence $(u_{j,i})$ in $\Cx(X)^+$
such that $u_{j,i}\nearrow \rho_j$.
By the dominated convergence theorem we get
$$\|u_{j,i}-\rho_{j}\|_p\overset{i\to\infty}{\longrightarrow} 0.$$ 
 Choose $\ep>0$. 
 Passing to a subsequence, we may assume
that for $i,j\in\en$ we have
$$
\|u_{j,i}-\rho_j\|_p\le 2^{-j-i}\ep,
$$
so that 
$$\|u_{j,i+1}-u_{j,i}\|_p\le \|\rho_j-u_{j,i}\|_p\le 2^{-j-i}\ep,$$
hence
\eqn{m1/2}
$$
\sum_{i,j=1}^{\infty}\|u_{j,i+1}-u_{j,i}\|_p\le \ep.
$$
Denote $u_j=u_{j,1}$ and 
reorder $u_{j,i+1}-u_{j,i}$ ($i,j=1,2,\dots$) 
into a single sequence $(w_j)_j$.
Set 
$$
g_j=u_j+w_1+\dots+w_j.
$$ 
We obtain
\eqn{m1} 
$$
\|g_j\|_p\le \|u_j\|_p+\ep \le \|\rho_j\|_p+\ep
$$
from \eqref{m1/2} by the triangle inequality.

Next we show that the sequence $(g_j)$ is admissible for $\E$.
Choose $\mu\in\E$ and set
$$
t=\liminf_{j\to\infty}\int_{X}u_j\,d\mu.
$$ 
Choose $\delta>0$ and find $k\in\en$ such that 
\eqn{m2}
$$
\int_{X}u_j\,d\mu>t-\delta \text{ and } \int_{X}\rho_j\,d\mu>1-\delta,\qquad j\ge k.
$$
Find $m\ge k$ such that 
\eqn{m3}
$$
\int_{X}u_m\,d\mu<t+\delta.
$$
Then by \eqref{m2} and \eqref{m3}
\eqn{m4}
$$
\sum_{i=1}^{\infty}\int_{X}(u_{m,i+1}-u_{m,i})\,d\mu=
\int_{X}\rho_m\,d\mu-
\int_{X}u_m\,d\mu>
1-t-2\delta.
$$
Since the sequence $(w_j)_j$ contains all functions 
$u_{m,i+1}-u_{m,i}$,  there exists $l\ge k$
such that 
$$
\sum_{j=1}^l\int_{X}w_j\,d\mu >1-t-2\delta.
$$
Let $j\ge l$, then by \eqref{m2} and \eqref{m4}
$$
\int_{X}g_j\,d\mu\ge \int_{X}u_j\,d\mu + \sum_{i\le l}\int_{X}w_i\,d\mu
\ge (t-\delta)+(1-t-2\delta)=1-3\delta.
$$
We have shown that $(g_j)$ is admissible for $\E$ and by \eqref{m1},
$$
\liminf_{j}\|g_j\|_p\le \liminf_{j}\|\rho_j\|_p+\ep. 
$$
Since $g_j\in\Cx(X)$ for each $j$ and $\ep>0$ is arbitrary, we deduce  $AM_{p,c}^A(\E)\le AM_p(\E)$. 
\end{proof}

 The previous theorem says, in particular, that the modulus $AM_{p,c}^A$ does not depend on the choice of $\Cx(X)$. For $M_{p,c}^A$ the situation is different by Example~\ref{ex:MpneMp}, but we have at least the following result on compact families of measures.

\begin{thm}\label{t:eq} Let $p\ge1$ and $\K\subset \M^+(X)$ be $\taux$-compact.
Then
$$
M_{p}(\K)=M_{p,c}^A(\K).
$$
\end{thm}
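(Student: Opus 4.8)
The inequality $M_p(\K)\le M_{p,c}^A(\K)$ is trivial, since every admissible $\rho\in\Cx(X)$ is in particular an admissible lsc function. So the content is the reverse inequality $M_{p,c}^A(\K)\le M_p(\K)$, and we may assume $M_p(\K)<\infty$. The plan is to fix an lsc admissible $\rho$ for $\K$ that nearly realizes $M_p(\K)$, and to produce from it a continuous admissible function in $\Cx(X)$ whose $L^p$-norm has increased by at most $\ep$. The compactness of $\K$ is what lets us do this: for a single admissible $\rho$ (rather than a sequence), the pointwise approximation from below provided by Lemma \ref{L:aprox}(ii) does not automatically preserve the admissibility inequality $\int\rho\,d\mu\ge 1$ uniformly over an arbitrary $\E$, but on a compact family one can trade a little room in the constant for uniformity.

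Concretely: by Lemma \ref{L:aprox}(ii), choose $(g_k)_k$ in $\Cx(X)^+$ with $g_k\nearrow\rho$. For each $\mu\in\K$, by monotone convergence $\int_X g_k\,d\mu\to\int_X\rho\,d\mu\ge1$, so there is $k$ with $\int_X g_k\,d\mu>1-\delta$; moreover, since $\mu\mapsto\int_X g_k\,d\mu$ is $\taux$-lower semicontinuous (each $g_k$ is bounded continuous, so this follows from the very definition of $\taux$ as the weak$^*$ topology of $\Cx(X)^*$ — alternatively from Lemma \ref{L:aprox}(ii) applied to $g_k$ itself, or from Lemma \ref{L:taub}(i) together with Proposition~\ref{P:tauA}), the set $\{\nu\in\M^+(X)\colon \int_X g_k\,d\nu>1-\delta\}$ is $\taux$-open. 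These open sets cover $\K$; by compactness finitely many suffice, say indices $k_1,\dots,k_N$, and then $G:=\max\{g_{k_1},\dots,g_{k_N}\}\in\Cx(X)^+$ (a finite max is in $\Cx(X)$ by the lattice property (A1)) satisfies $\int_X G\,d\mu>1-\delta$ for every $\mu\in\K$, while $0\le G\le\rho$, so $\|G\|_{L^p(\meas)}\le\|\rho\|_{L^p(\meas)}$. Now rescale: $\widetilde G:=G/(1-\delta)\in\Cx(X)^+$ is admissible for $\K$ and $\|\widetilde G\|_{L^p(\meas)}^p\le(1-\delta)^{-p}\|\rho\|_{L^p(\meas)}^p$.

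Taking $\rho$ with $\|\rho\|_{L^p(\meas)}^p\le M_p(\K)+\ep$, we obtain $M_{p,c}^A(\K)\le(1-\delta)^{-p}(M_p(\K)+\ep)$; letting $\delta\to0$ and $\ep\to0$ gives $M_{p,c}^A(\K)\le M_p(\K)$, as desired. The one point that needs a little care — and is the main (small) obstacle — is the lower semicontinuity of $\mu\mapsto\int_X g\,d\mu$ for $g\in\Cx(X)^+$ in the topology $\taux$: for $g\in\Cx(X)$ the map $\mu\mapsto\langle\mu,g\rangle$ is by definition $\taux$-continuous, so lower semicontinuity is immediate; the compactness argument then goes through verbatim. (Note that, unlike in Theorem~\ref{t:AM=AM}, we genuinely use compactness of $\K$ here, since for a non-compact family the approximating function $G$ would depend on $\mu$ and could not be chosen uniformly.)
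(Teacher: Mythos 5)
Your proof is correct and follows essentially the same route as the paper: approximate a near-optimal lsc admissible $\rho$ from below by $(g_k)\subset\Cx(X)^+$ via Lemma~\ref{L:aprox}(ii), cover the compact $\K$ by the $\taux$-open sets $\{\mu\colon\langle\mu,g_k\rangle>1-\delta\}$, extract a finite subcover, and rescale by $(1-\delta)^{-1}$. The only cosmetic difference is that the paper exploits the monotonicity of these open sets to pick a single index $k$ instead of taking a finite maximum (which, since the $g_k$ increase, is the same function anyway).
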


\begin{proof} 
Since $M_{p,c}^A(\K) \ge M_p(\K)$, we need to prove the opposite inequality and for
that we may assume that $M_p(\K) < \infty$. 
Let $\ep\in  (0, 1/2)$. Choose an $M_p$-admissible function $\rho$ for $\K$ 
such that
$$
\int_{X}\rho^p\,d\meas< M_p(\K)+\ep.
$$
By Lemma~\ref{L:aprox}(ii) there is a sequence $(\rho_j)_j$ of functions from 
$\Cx(X)^+$ such that
$\rho_j\nearrow \rho$.
By the monotone convergence theorem we have
$$
\langle \mu,\;\rho_j\rangle \nearrow \langle \mu,\;\rho\rangle 
$$
for every $\mu\in\K$.
Next, let 
$$
\G_j=\{\mu\in\M^+(X)\colon \langle \mu,\;\rho_j\rangle>1-\ep \}.
$$
Since  $\rho_j\in\Cx(X)$, $\G_j$ form a $\taux$-open cover of $\K$.
By compactness of $\K$ and monotonicity of $(\G_j)_j$ there is
$k\in\en$ such that $\G_k\supset \K$.
Then $\rho_k/(1-\ep)$ is admissible for $M_{p,c}^A(\K)$ and we obtain
$$
M_{p,c}^A(\K)\le \int_{X}\frac{\rho_k^p}{(1-\ep)^p}\,d\meas < \frac{M_p(\K)+\ep}{(1-\ep)^p}.
$$
Letting $\ep\to 0$ we conclude that $M_{p,c}^A(\K) \le M_p(\K)$
\end{proof}

Now, we will study the moduli from the point of view of 
properties listed in Section \ref{s:choquet}.

\begin{thm}[\cite{ADS}, Theorem 5.1]\label{t:choquet} 
If $1<p<\infty$,
$M_p$ is a Choquet capacity on $(\M^+(X),\tau_b)$.
\end{thm}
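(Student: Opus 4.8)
The plan is to verify the three defining properties of a Choquet capacity (monotonicity, continuity along increasing sequences, continuity along decreasing sequences of compacts) for $M_p$ on the Polish space $(\M^+(X),\tau_b)$, each following from a suitable stability property of the modulus under the relevant limits. Monotonicity (property \eqref{monot}) is immediate from the definition: if $\E_1\subset\E_2$, then every function admissible for $\E_2$ is admissible for $\E_1$, so $M_p(\E_1)\le M_p(\E_2)$. The substance lies in properties \eqref{upward} and \eqref{downward}.

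\textbf{Continuity along increasing families.} Given $\E_1\subset\E_2\subset\cdots$ with union $\E$, monotonicity already gives $M_p(\E)\ge\sup_j M_p(\E_j)=:L$, and we may assume $L<\infty$. The key is that for $p>1$ the space $L^p(\meas)$ is \emph{reflexive} (equivalently, uniformly convex), so admissible functions can be extracted as weak limits. Concretely, for each $j$ choose $\rho_j$ admissible for $\E_j$ with $\|\rho_j\|_{L^p}^p\le M_p(\E_j)+\frac1j\le L+\frac1j$; this is a bounded sequence in $L^p(\meas)$, so a subsequence converges weakly to some $\rho$. By Mazur's lemma one passes to convex combinations converging strongly (hence a.e.\ along a further subsequence), and after replacing $\rho$ by its lsc envelope (or working with Borel representatives and using that the modulus is unchanged if we allow Borel admissible functions, cf.\ the standard Fuglede-type argument) one checks $\int_X\rho\,d\mu\ge 1$ for every $\mu\in\bigcup_j\E_j$: indeed for $\mu\in\E_j$ all later $\rho_i$ with $i\ge j$ integrate to $\ge 1$ against $\mu$, and weak convergence against the (possibly unbounded, but this is handled by truncation and monotone convergence) density is preserved under the convex-combination passage. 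Then $\|\rho\|_{L^p}^p\le\liminf\|\rho_j\|_{L^p}^p\le L$ by weak lower semicontinuity of the norm, giving $M_p(\E)\le L$.

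\textbf{Continuity along decreasing compacts.} Let $\K_1\supset\K_2\supset\cdots$ be $\tau_b$-compact with intersection $\K$. Monotonicity gives $M_p(\K)\le\inf_j M_p(\K_j)$; for the reverse, fix $\ep>0$ and choose $\rho$ admissible for $\K$ with $\|\rho\|_{L^p}^p<M_p(\K)+\ep$. Using Lemma~\ref{L:aprox}(ii) (or directly for $\Cx(X)=C_b(X)$, the case relevant to $\tau_b$) approximate $\rho$ from below by $\rho_i\in C_b(X)^+$ with $\rho_i\nearrow\rho$; by monotone convergence $\langle\mu,\rho_i\rangle\nearrow\langle\mu,\rho\rangle\ge 1$ for every $\mu\in\K$, so the $\tau_b$-open sets $\{\mu\colon\langle\mu,\rho_i\rangle>1-\ep\}$ cover $\K$. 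A standard compactness-and-nesting argument (exactly as in the proof of Theorem~\ref{t:eq}, now with the family $\K$ replaced by the nested sequence $\K_j$: if no single $i$ and $j$ worked, one could pick $\mu_j\in\K_j$ outside all these sets and extract a $\tau_b$-convergent subnet — legitimate since $\K_1$ is compact and, being a compact subset, metrizable by Proposition~\ref{P:tauA}(f), so a subsequence — whose limit lies in $\bigcap_j\K_j=\K$ yet violates $\langle\cdot,\rho_i\rangle>1-\ep$ in the limit, a contradiction) produces $i,j$ with $\langle\mu,\rho_i\rangle>1-\ep$ for all $\mu\in\K_j$. Then $\rho_i/(1-\ep)$ is admissible for $\K_j$, so $M_p(\K_j)\le(1-\ep)^{-p}\|\rho\|_{L^p}^p<(1-\ep)^{-p}(M_p(\K)+\ep)$; letting $\ep\to0$ finishes the proof.

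\textbf{Main obstacle.} The delicate point is property \eqref{upward}: the extraction of a limiting admissible function uses reflexivity of $L^p$ for $p>1$ in an essential way — this is precisely what fails at $p=1$ and motivates the rest of the paper. One must be careful about the possibly infinite densities $d\mu/d\meas$ (measures in $\M^+(X)$ need not be absolutely continuous with respect to $\meas$ at all, so ``$\int_X\rho\,d\mu$'' is a genuine integral against $\mu$, not a pairing in $L^{p'}$); the weak-convergence argument must therefore be run against truncations $\rho\wedge N$ and then one lets $N\to\infty$ by monotone convergence, or else one invokes the Mazur-lemma passage to a.e.-convergent convex combinations and applies Fatou under each fixed $\mu$. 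The decreasing-compacts part is comparatively routine, being a direct adaptation of the argument already given for Theorem~\ref{t:eq}.
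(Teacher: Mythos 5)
First, a point of comparison: the paper does not prove this statement at all --- it is quoted from \cite[Theorem 5.1]{ADS}, so there is no in-paper proof to measure you against; only the decreasing-compacts part resurfaces later (Proposition~\ref{L:wupper}, via property \eqref{wupper} and Theorem~\ref{t:eq}) in the more general $\taux$ setting, and your argument for \eqref{downward} is essentially that one and is correct as written. Your overall strategy --- monotonicity trivially, \eqref{downward} by monotone approximation from below by continuous functions plus a compactness/nesting argument, \eqref{upward} by reflexivity of $L^p$ --- is the standard route going back to \cite{Fug} and followed in \cite{ADS}.

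The genuine gap is in \eqref{upward}, precisely at the point you yourself flag as delicate. After Mazur's lemma you have convex combinations $\tilde\rho_k$ of $\{\rho_j\colon j\ge k\}$ (each admissible for $\E_k$) converging to $\rho$ in $L^p(\meas)$ and $\meas$-a.e.; but $\meas$-a.e.\ convergence says nothing about $\int\tilde\rho_k\,d\mu$ when $\mu$ is singular with respect to $\meas$, so neither ``Fatou under each fixed $\mu$'' nor truncation plus monotone convergence yields $\int_X\rho\,d\mu\ge1$ for \emph{every} $\mu\in\E$ --- and indeed the limit function need not be admissible for all of $\E$. The standard repair is Fuglede's lemma: after passing to a further subsequence with $\|\tilde\rho_{k_i}-\rho\|_p\le 2^{-2i}$, the family $N$ of measures $\mu$ for which $\int_X|\tilde\rho_{k_i}-\rho|\,d\mu\not\to0$ satisfies $M_p(N)=0$ (since $n^{-1}\sum_i 2^i|\tilde\rho_{k_i}-\rho|$ is admissible for $N$ for every $n$), so $\rho$ is admissible for $\E\setminus N$, and one concludes via the countable subadditivity of $M_p$, which therefore also needs to be recorded. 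A second, smaller slip: to restore lower semicontinuity you need an lsc \emph{majorant} of a fixed Borel representative of $\rho$ with almost the same $L^p(\meas)$-norm (Vitali--Carath\'eodory), not the ``lsc envelope,'' which is a minorant and may destroy admissibility against measures charging $\meas$-null sets.
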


\begin{rmrk} The results of \cite{ADS} concern the topology $\tau_b$.
Moreover, the notion of a Choquet capacity is defined and studied on Polish spaces, so it 
is natural to formulate this result for $\tau_b$. However, even for nonmetrizable topologies
it has a sense to study the properties from Definition \ref{d:capacity}.
Observe that properties \eqref{monot} and \eqref{upward} are independent
of the choice of topology. Concerning \eqref{downward}, it is shown for 
$\tau_b$ in \cite{ADS}, but for more general $\taux$ it holds as well, see Proposition~\ref{L:wupper} below.
\end{rmrk}

By Theorem \ref{t:choquet}, the $M_p$-modulus, $p>1$, has the property \eqref{upward}
of Definition \ref{d:capacity}, namely
\begin{equation} \label{E1}
\lim_{i \rightarrow \infty }M_p(\E_i) = M_p\Bigl(\bigcup_i \E_i\Bigr)
\end{equation}
if $\E_i\subset\M^+(X)$,
$\E_1 \subset \E_2 \subset \, \dots$.

The situation for $p=1$ is more complicated. The property \eqref{upward} 
of Definition \ref{d:capacity} holds
for neither of the moduli $AM$, $M_1$ and $M_{1,c}^A$, so that 
these moduli fail to be Choquet capacities. This is shown in Theorems
\ref{t:nonincr-meas}, \ref{t:m1nec} and \ref{t:cnec} below. 

To describe the situation in detail, we start with some positive results.

\begin{lemma} \label{Lbasic}
If $\E_1 \subset \E_2 \subset \, ...$ is a sequence of subsets of $\M^+(X)$
and $\E = \bigcup_i \E_i$, then 
$$ 
AM(\E) \leq \lim_{i \rightarrow \infty} M_1(\E_i) .
$$
\end{lemma}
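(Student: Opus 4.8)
The plan is to observe that a sequence of near-optimal $M_1$-admissible functions for the $\E_i$ is automatically an $AM$-admissible sequence for the union $\E$, which is exactly the flexibility that the $\liminf$ in the definition of $AM$ buys us.

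First I would note that by monotonicity (property \eqref{monot}, which $M_1$ clearly enjoys since enlarging a family only shrinks the set of admissible functions), the sequence $(M_1(\E_i))_i$ is nondecreasing in $[0,\infty]$, so $L:=\lim_{i\to\infty}M_1(\E_i)$ exists. If $L=\infty$ there is nothing to prove, so I would assume $L<\infty$. Fix $\ep>0$ and, for each $i\in\en$, choose a lower semicontinuous $\rho_i\colon X\to[0,\infty]$ which is admissible for $\E_i$ and satisfies
$$
\int_X\rho_i\,d\meas< M_1(\E_i)+\ep\le L+\ep.
$$

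The key step is to check that the single sequence $(\rho_i)_i$ is admissible for $\E$. Indeed, let $\mu\in\E=\bigcup_i\E_i$. Then $\mu\in\E_{i_0}$ for some $i_0$, and since the families increase, $\mu\in\E_i$ for every $i\ge i_0$; hence $\int_X\rho_i\,d\mu\ge1$ for all $i\ge i_0$, so $\liminf_i\int_X\rho_i\,d\mu\ge1$. Thus $(\rho_i)_i$ is an admissible sequence for $\E$ in the sense of the definition of $AM$, and therefore
$$
AM(\E)\le\liminf_i\|\rho_i\|_{L^1(\meas)}=\liminf_i\int_X\rho_i\,d\meas\le L+\ep.
$$
Letting $\ep\to0$ gives $AM(\E)\le L=\lim_{i\to\infty}M_1(\E_i)$.

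There is essentially no serious obstacle here; the only thing to be careful about is the bookkeeping with the two different notions of admissibility (a single function for $M_1$ versus a sequence for $AM$) and the fact that the constant $\ep$ can be chosen uniformly in $i$ because the bound $M_1(\E_i)+\ep\le L+\ep$ already uses the monotone limit. One could alternatively let $\ep$ depend on $i$ (e.g. replace $\ep$ by $2^{-i}$), but the uniform choice is cleanest. No compactness, regularity, or structure of $X$ or $\Cx(X)$ is needed for this inequality.
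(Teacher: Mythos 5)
Your proof is correct and follows essentially the same route as the paper: choose near-optimal $M_1$-admissible functions $\rho_i$ for the $\E_i$, observe that they form an $AM$-admissible sequence for $\E$ because each $\mu\in\E$ lies in all $\E_i$ from some index on, and pass to the limit. The only cosmetic difference is that the paper uses the error term $2^{-i}$ in place of your uniform $\ep$, which you yourself note as an equivalent alternative.
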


\begin{proof}
We may assume
that $ \lim\limits_{i \rightarrow \infty} M_1(\E_i) < \infty$. Choose
admissible functions $\rho_i$ for $\E_i$ such that
$$\int_X \rho_i \, d \meas < M_1(\E_i) + 2^{-i},\qquad i=1,2,\dots.$$
The sequence $(\rho_i)$ is admissible for $\E$. Indeed,
if $\mu \in \E$, then $$\mu \in \E_k\subset \E_{k+1}\subset\dots$$ for
some $k$ and thus
$$ \liminf_{i \rightarrow \infty} \int_{X} \rho_i \, d\mu \geq 1 .$$
Now we obtain
$$ AM(\E) \leq  \liminf_{i \rightarrow \infty} \int_{X} \rho_i \, d \meas
\leq  \liminf_{i \rightarrow \infty} (M_1(\E_i) + 2^{-i}) = 
\lim_{i \rightarrow \infty} M_1(\E_i).$$
\end{proof}

\begin{cor} \label{C2}
If $\E_1 \subset \E_2 \subset \, ...$ are subsets of $\M^+(X)$
and
$M_1(\E_i) = AM(\E_i)$ for each $i$, then 
\begin{equation} \label{E3}
\lim_{i \rightarrow \infty}AM(\E_i) = 
AM\Bigl(\bigcup_i \E_i\Bigr) 
\end{equation}
\end{cor}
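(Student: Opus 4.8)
The plan is to derive the corollary directly by combining Lemma~\ref{Lbasic} with the trivial inequality $AM\le M_1$ and the monotonicity of both $AM$ and $M_1$. Write $\E=\bigcup_i\E_i$. On one hand, by Lemma~\ref{Lbasic} we have $AM(\E)\le\lim_i M_1(\E_i)$, and by hypothesis $M_1(\E_i)=AM(\E_i)$ for each $i$, so $AM(\E)\le\lim_i AM(\E_i)$ (the limit exists because $(\E_i)$ is increasing and $AM$, being a modulus, is monotone, so $(AM(\E_i))_i$ is nondecreasing in $[0,\infty]$). On the other hand, monotonicity of $AM$ gives $AM(\E_i)\le AM(\E)$ for every $i$, hence $\lim_i AM(\E_i)\le AM(\E)$. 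Combining the two inequalities yields \eqref{E3}.

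The only points needing a word of justification are that $AM$ is monotone and that $AM\le M_1$ is not even needed here — the hypothesis $M_1(\E_i)=AM(\E_i)$ already substitutes for it. Monotonicity of $AM$ is immediate from the definition: if $\E'\subset\E''$, then every admissible sequence for $\E''$ is admissible for $\E'$, so the infimum defining $AM(\E')$ is taken over a larger collection and hence $AM(\E')\le AM(\E'')$. The same remark applies to $M_1$, which is used implicitly inside Lemma~\ref{Lbasic}.

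There is essentially no obstacle: the content of the corollary is entirely carried by Lemma~\ref{Lbasic}, whose proof exhibits a single admissible sequence $(\rho_i)$ for $\E$ obtained from near-optimal admissible functions $\rho_i$ for the $\E_i$. The corollary merely records the observation that, under the extra hypothesis $M_1(\E_i)=AM(\E_i)$, the upper bound $\lim_i M_1(\E_i)$ furnished by the lemma coincides with $\lim_i AM(\E_i)$, and this upper bound is matched from below by monotonicity. I would therefore present the proof in three short lines: the lower bound $\lim_i AM(\E_i)\le AM(\E)$ from monotonicity, the upper bound $AM(\E)\le\lim_i M_1(\E_i)=\lim_i AM(\E_i)$ from Lemma~\ref{Lbasic} and the hypothesis, and the conclusion.

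\begin{proof}
Put $\E=\bigcup_i\E_i$. Since $\E_1\subset\E_2\subset\dots$ and $AM$ is monotone (every admissible sequence for $\E_{i+1}$ is admissible for $\E_i$), the sequence $(AM(\E_i))_i$ is nondecreasing, so $\lim_{i\to\infty}AM(\E_i)$ exists in $[0,\infty]$, and
$$
\lim_{i\to\infty}AM(\E_i)\le AM(\E).
$$
Conversely, by Lemma~\ref{Lbasic} and the assumption $M_1(\E_i)=AM(\E_i)$,
$$
AM(\E)\le\lim_{i\to\infty}M_1(\E_i)=\lim_{i\to\infty}AM(\E_i).
$$
Combining the two displayed inequalities gives \eqref{E3}.
\end{proof}
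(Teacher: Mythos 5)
Your proof is correct and is essentially identical to the paper's: both deduce $AM(\bigcup_i\E_i)\le\lim_i M_1(\E_i)=\lim_i AM(\E_i)$ from Lemma~\ref{Lbasic} together with the hypothesis, and obtain the reverse inequality from monotonicity. The extra remarks on why $AM$ is monotone are harmless but not needed.
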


\begin{proof} 
By Lemma \ref{Lbasic}, 
$$AM\Bigl(\bigcup_i \E_i\Bigr)  \le
\lim\limits_{i \rightarrow \infty}M(\E_i)=\lim\limits_{i \rightarrow \infty}AM(\E_i) .$$ 
The reverse inequality follows from monotonicity.
\end{proof}

The following theorem provides an alternative characterization for the $AM$-modulus in terms of increasing path families and the $M_1$--modulus.

\begin{thm} \label{Tinside}
If $\E\subset \M^+(X)$, then 
\begin{equation} \label{Eepsilon} 
\aligned
AM(\E) &= \inf \{ \lim_{i \rightarrow \infty} M_1(\E_i) \colon \, \E_1 \subset \E_2 \subset \, ... \textnormal{ and } \bigcup_i \E_i = \E \}
\\&= \inf \{ \lim_{i \rightarrow \infty} M_{1,c}^A(\E_i) \colon \, \E_1 \subset \E_2 \subset \, ... \textnormal{ and } \bigcup_i \E_i = \E \}.
\endaligned
\end{equation}
\end{thm}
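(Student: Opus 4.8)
The plan is to prove the chain of inequalities
$$
AM(\E)\ \ge\ \inf\Bigl\{\lim_i M_1(\E_i)\Bigr\}\ \ge\ \inf\Bigl\{\lim_i M_{1,c}^A(\E_i)\Bigr\}\ \ge\ AM(\E),
$$
where in each infimum $\E_1\subset\E_2\subset\cdots$ ranges over increasing exhaustions of $\E$. The middle inequality is free: given an increasing exhaustion and a continuous admissible function $\rho$ for $M_{1,c}^A(\E_i)$, the same $\rho$ is admissible for $M_1(\E_i)$, so $M_1(\E_i)\le M_{1,c}^A(\E_i)$ termwise, hence the same relation passes to the limits and then to the infima. (Alternatively one runs the whole argument with $M_{1,c}^A$ in place of $M_1$ and invokes Theorem~\ref{t:AM=AM}; both moduli produce the same $AM$.) The last inequality $\inf\{\lim_i M_{1,c}^A(\E_i)\}\ge AM(\E)$ is exactly Lemma~\ref{Lbasic} (together with $M_1\le M_{1,c}^A$, or directly its continuous analogue): for \emph{any} increasing exhaustion $(\E_i)$ of $\E$ one has $AM(\E)\le\lim_i M_1(\E_i)\le\lim_i M_{1,c}^A(\E_i)$, so $AM(\E)$ is a lower bound for the infimum.

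The real content is the first inequality: $AM(\E)\ge\inf\{\lim_i M_1(\E_i)\colon \E_i\nearrow\E\}$, i.e.\ I must \emph{construct} one increasing exhaustion $(\E_i)$ whose $M_1$-moduli converge back down (at least along a subsequence) to $AM(\E)$, up to an arbitrary $\ep>0$. Here is the construction I would use. We may assume $AM(\E)<\infty$. Fix $\ep>0$ and pick an admissible sequence $(\rho_j)_j$ of lsc functions for $\E$ with $\liminf_j\|\rho_j\|_1\le AM(\E)+\ep$; passing to a subsequence, assume $\|\rho_j\|_1\le AM(\E)+\ep$ for all $j$. For each $i\in\en$ set
$$
\E_i=\Bigl\{\mu\in\E\colon \int_X\rho_j\,d\mu\ge 1-\tfrac1i\ \text{ for all }j\ge i\Bigr\}.
$$
Then $\E_i\subset\E_{i+1}$, and $\bigcup_i\E_i=\E$ because for each $\mu\in\E$ the admissibility gives $\liminf_j\int\rho_j\,d\mu\ge 1$, so $\int\rho_j\,d\mu\ge 1-\tfrac1i$ for all $j$ large, and once $i$ is also large enough that this holds from index $i$ on, $\mu\in\E_i$. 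Now for fixed $i$, the single function $\rho_i/(1-\tfrac1i)$ is admissible for $\E_i$ (each $\mu\in\E_i$ satisfies $\int\rho_i\,d\mu\ge 1-\tfrac1i$ since $i\ge i$), so
$$
M_1(\E_i)\ \le\ \frac{\|\rho_i\|_1}{1-\tfrac1i}\ \le\ \frac{AM(\E)+\ep}{1-\tfrac1i}\ \xrightarrow[i\to\infty]{}\ AM(\E)+\ep.
$$
Hence this particular exhaustion satisfies $\lim_i M_1(\E_i)\le AM(\E)+\ep$, so the infimum over all exhaustions is $\le AM(\E)+\ep$; letting $\ep\to0$ gives the first inequality. (Monotonicity also gives $M_1(\E_i)\ge M_1(\E_0)$ type lower bounds but these are not needed.)

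The step I expect to be the only delicate point is checking that $\bigcup_i\E_i=\E$, i.e.\ that the two quantifiers in the definition of $\E_i$ (``for all $j\ge i$'') do not prevent a given $\mu$ from eventually landing in some $\E_i$: this is where one uses that $\liminf$ controls the tail uniformly, namely $\liminf_j\int\rho_j\,d\mu\ge1$ means $\int\rho_j\,d\mu>1-\tfrac1i$ for all sufficiently large $j$, and then taking $i$ at least as large as that threshold puts $\mu$ in $\E_i$. Everything else is bookkeeping: the termwise bound $M_1\le M_{1,c}^A$, the limit $1/(1-\tfrac1i)\to1$, and Lemma~\ref{Lbasic} for the return inequality. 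No compactness or capacitability input is required, which is consistent with the remark in the introduction that the subtlety for $p=1$ lies precisely in the behaviour on increasing families.
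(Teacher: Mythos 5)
Your overall strategy --- Lemma~\ref{Lbasic} for the inequality $AM(\E)\le\inf\{\lim_i M_1(\E_i)\}$ and an explicit exhaustion built from a near-optimal admissible sequence for the converse --- is exactly the paper's. But the specific exhaustion you define does not work: because the threshold $1-\tfrac1i$ varies with $i$, the sets $\E_i=\{\mu\in\E\colon\int_X\rho_j\,d\mu\ge1-\tfrac1i\text{ for all }j\ge i\}$ are in general neither increasing nor exhaustive. For monotonicity, passing from $\E_i$ to $\E_{i+1}$ drops one condition but raises the threshold, so neither inclusion is automatic (e.g.\ if $\int_X\rho_j\,d\mu=1$ for all $j$ except $\int_X\rho_{100}\,d\mu=0.9$, then $\mu\in\E_{10}\setminus\E_{11}$). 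Worse, $\bigcup_i\E_i=\E$ can fail: admissibility only guarantees $\liminf_j\int_X\rho_j\,d\mu\ge1$, so these integrals may approach $1$ from below arbitrarily slowly; if, say, $\int_X\rho_j\,d\mu=1-\tfrac2j$ for all $j$, then $\mu$ is compatible with admissibility, yet $\int_X\rho_i\,d\mu=1-\tfrac2i<1-\tfrac1i$, so $\mu$ lies in no $\E_i$. Your verification hides this in the phrase ``once $i$ is also large enough that this holds from index $i$ on'': the index from which $\int_X\rho_j\,d\mu\ge1-\tfrac1i$ holds depends on $i$ and may always exceed $i$. The repair is the paper's version: fix $\delta\in(0,1)$, set $\E_i=\bigcap_{j\ge i}\{\mu\in\E\colon\int_X\rho_j\,d\mu\ge1-\delta\}$, which is obviously increasing and exhausts $\E$, deduce that the modulus of $\E_i$ is at most $\|\rho_i\|_1/(1-\delta)$, and send $\delta\to0$ only at the very end, outside the exhaustion.

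Two smaller points. The middle inequality of your displayed chain is written the wrong way round: $M_1\le M_{1,c}^A$ termwise gives $\inf\{\lim_i M_1(\E_i)\}\le\inf\{\lim_i M_{1,c}^A(\E_i)\}$, not $\ge$, so the chain does not close as stated. Moreover, to obtain the $M_{1,c}^A$ equality the constructed exhaustion must be tested with \emph{continuous} admissible functions, and your lower semicontinuous $\rho_i$ only bound $M_1(\E_i)$. Your parenthetical remark is the correct fix and is what the paper does: by Theorem~\ref{t:AM=AM} one may take the admissible sequence $(\rho_j)$ in $\Cx(X)^+$ from the start, so that $\rho_i/(1-\delta)$ witnesses the bound for $M_{1,c}^A(\E_i)$; both equalities then follow simultaneously, since $M_1\le M_{1,c}^A$ and Lemma~\ref{Lbasic} close the loop.
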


\begin{proof} 
 Fix $\E\subset\M^+(X)$. Since clearly $M_1\le M_{1,c}^A$, by Lemma \ref{Lbasic} it suffices to show 
that for each $\varepsilon > 0$ there is an increasing  sequence $(\E_i)$ as 
in
\eqref{Eepsilon}
and
$$ \lim_{i \rightarrow \infty} M_{1,c}^A(\E_i) \leq AM(\E)  + \varepsilon.$$
Assume first that $AM(\E) < \infty$. 
Fix $\delta \in (0,1)$ to be specified later
 and choose an admissible
sequence $(\rho_j)$ for $\E$ such that
$$ \liminf_{j \rightarrow \infty} \int_{X} \rho_j \, d \meas \leq AM(\E) + \delta.$$
In view of Theorem \ref{t:AM=AM}, we 
may assume that
 $\rho_j$ in $\Cx(X)$.
Let
$$ \E_i = \bigcap_{j\ge i}\Bigl\{ \mu \in \E: \, \int_{X} \rho_j \, d\mu \geq 1-\delta  \Bigr\}.$$
Then $\E_i \subset \E_{i+1}$ and $\E = \bigcup_i \E_i$. Since
$\rho_i/(1 - \delta)$ is admissible for $\E_i$, we obtain
$$\lim_{i \rightarrow \infty}M_{1,c}^A(\E_i)\leq \liminf_{i \rightarrow \infty} \int_{X} \frac{\rho_i}{1-\delta} \, d \meas
\leq \frac{AM(\E) +\delta}{1-\delta}<AM(\E) + \varepsilon$$
if $\delta$ is small enough.

If $AM(\E) = \infty$, then we can choose  $\E_i = \E$ for each $i$.
\end{proof}

\begin{thm} \label{TMAM}
Let $\E\subset\M^+(X)$.
Then the following assertions are equivalent:
\begin{enumerate}[\rm(i)]
\item
$M_1(\E)=AM(\E)$,
\item\label{E2}
 $M_1(\E)=\lim\limits_{i \rightarrow \infty}M_1(\E_i)$ 
 for each increasing sequence $(\E_i)$ of subsets of
  $\M^+(X)$
with $\E = \bigcup_i \E_i$. 
\end{enumerate}
\end{thm}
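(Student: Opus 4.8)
The plan is to derive the equivalence almost immediately from Lemma~\ref{Lbasic} and Theorem~\ref{Tinside}, together with the elementary inequality $AM(\E)\le M_1(\E)$, valid for every $\E\subset\M^+(X)$. (If $\rho$ is lsc and admissible for $\E$, then the constant sequence $\rho_j=\rho$ is an admissible sequence for $\E$, so $AM(\E)\le\|\rho\|_{L^1(\meas)}$; taking the infimum over all such $\rho$ gives the claim, cf.\ Remark~\ref{r:reflexive}.)

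For the implication {\rm(i)}$\Rightarrow${\rm(ii)} I would proceed as follows. Let $(\E_i)$ be an arbitrary increasing sequence of subsets of $\M^+(X)$ with $\bigcup_i\E_i=\E$. Monotonicity of $M_1$ shows that $(M_1(\E_i))_i$ is nondecreasing, so its limit exists in $[0,\infty]$ and $\lim_{i\to\infty}M_1(\E_i)\le M_1(\E)$. Lemma~\ref{Lbasic} supplies the reverse estimate $AM(\E)\le\lim_{i\to\infty}M_1(\E_i)$, and the hypothesis $M_1(\E)=AM(\E)$ then forces $\lim_{i\to\infty}M_1(\E_i)=M_1(\E)$, which is exactly assertion {\rm(ii)}.

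For {\rm(ii)}$\Rightarrow${\rm(i)} I would invoke Theorem~\ref{Tinside}, according to which $AM(\E)$ equals the infimum of $\lim_{i\to\infty}M_1(\E_i)$ taken over all increasing sequences $(\E_i)$ with $\bigcup_i\E_i=\E$. Under assumption {\rm(ii)} every quantity occurring in this infimum equals $M_1(\E)$, hence $AM(\E)=M_1(\E)$. (One may also phrase this with an $\ep$: for each $\ep>0$ pick, using Theorem~\ref{Tinside}, an increasing sequence with union $\E$ and $\lim_{i\to\infty}M_1(\E_i)\le AM(\E)+\ep$; assumption {\rm(ii)} gives $M_1(\E)\le AM(\E)+\ep$, and letting $\ep\to0$ yields $M_1(\E)\le AM(\E)$, which combined with the elementary reverse inequality gives equality.)

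I do not expect a genuine obstacle here: the substance has already been extracted in Theorem~\ref{Tinside} (which in turn rests on Theorem~\ref{t:AM=AM}), and the present statement is a clean reformulation of that characterization as an equivalence. The only minor point to keep track of is the value $+\infty$: if $M_1(\E)=\infty$, then in {\rm(i)}$\Rightarrow${\rm(ii)} Lemma~\ref{Lbasic} still forces $\lim_{i\to\infty}M_1(\E_i)=\infty$, while in {\rm(ii)}$\Rightarrow${\rm(i)} the case $AM(\E)=\infty$ is covered by the constant choice $\E_i=\E$ allowed in Theorem~\ref{Tinside}.
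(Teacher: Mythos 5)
Your proposal is correct and follows essentially the same route as the paper: (i)$\Rightarrow$(ii) via monotonicity combined with Lemma~\ref{Lbasic}, and (ii)$\Rightarrow$(i) via the characterization of $AM$ in Theorem~\ref{Tinside}. Your extra care with the $+\infty$ cases is a harmless refinement of the same argument.
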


\begin{proof} (i)$\implies$(ii):
If $AM(\E) = M_1(\E)$ then (\ref{E2}) follows from Lemma \ref{Lbasic} as
$$
\lim_{i \rightarrow \infty} M_1(\E_i) \leq M_1(\E).
$$ 
(ii)$\implies$(i):
Since $AM(\E)\le M_1(\E)$, we need only to prove 
the converse inequality and we may assume that $AM(\E) < \infty$. 
Choose $\ep>0$ and use Lemma \ref{Tinside} to find an increasing sequence
$(\E_i)$ of subsets of $\E$ with $\E = \bigcup_i \E_i$ such that
$$
\lim_i M_1(\E_j) < AM(\E)+\ep.
$$
Then by (ii)
$$
M_1(\E)<AM(\E)+\ep
$$
and letting $\ep\to0$ we obtain the desired inequality.
\end{proof}

We give two examples involving $\tau_b$-compact families. Since $\tau_b$ is the finest
from all $\taux$-topologies, 
the families in consideration are $\taux$-compact as well.

\begin{thm}\label{t:cnec}
Let $X$ be $\er^2$ with the Lebesgue measure.
There exists an increasing
sequence $\Gamma_k$ of $\tau_b$-compact families of paths in $\er^2$
such that, denoting $\Gamma=\bigcup_k\Gamma_k$,
$$
M_1(\Gamma)=\lim_k M_{1}(\Gamma_k)=\lim_k M_{1,c}^A(\Gamma_k)=0<\infty= M_{1,c}^A(\Gamma).
$$
Consequently, $\Gamma$ is $\F_{\sigma}$ but not $M_{1,c}^A$-capacitable
(see Definition \ref{d:capacitable}).
\end{thm}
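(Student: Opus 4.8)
The idea is to realize $\Gamma$ as an increasing union of families consisting of line segments clustered more and more densely near a point where no function of $C_0(\er^2)$ (or, more importantly, no admissible \emph{function}) can be integrable. Concretely, I would work in the unit square, take $\Gamma_k$ to be the family of vertical segments $J_t=\{t\}\times[0,1]$ for $t$ ranging over a suitable finite (or compact) set $E_k\subset(0,1)$, and arrange $E_k\nearrow E$ with $E$ chosen so that $\sup E=1$ (or, more robustly, so that $E$ accumulates at $1$ from the left while staying countable or compact-by-pieces). Each $\Gamma_k$ is $\tau_b$-compact: it is the continuous image under $t\mapsto\mu_{J_t}$ of the compact set $E_k$, and one checks $\tau_b$-continuity of this map directly (or invokes Lemma~\ref{L:taub}(i), since $t\mapsto\mu_{J_t}(G)$ behaves well for open $G$). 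For the modulus computations, the function $\rho\equiv$ (a large constant times $\chi$ of a thin horizontal strip near height $0$) — or even better, a single fixed bounded $\rho$ — gives $\int_{J_t}\rho\,ds\ge1$ for all $t$, and by letting the strip shrink one gets $M_1(\Gamma_k)=M_1(\Gamma)=0$; combined with Theorem~\ref{t:eq} applied to the compact $\Gamma_k$ one concludes $M_{1,c}^A(\Gamma_k)=M_1(\Gamma_k)=0$.

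The crux is the strict inequality $M_{1,c}^A(\Gamma)=\infty$. Here the point is that an admissible $\rho\in\Cx(X)=C_0(\er^2)$ (or, depending on which acceptable subspace one has fixed to make the example sharpest, an admissible \emph{continuous} function) must satisfy $\int_0^1\rho(t,s)\,ds\ge1$ for every $t\in E$, hence by continuity for every $t$ in the closure $\overline E\ni1$; but a function in $C_0$ must tend to $0$ near the vertical line $t=1$ lying ``at the boundary/infinity'' of the relevant space, forcing $\int_0^1\rho(t,s)\,ds\to0$ as $t\to1$, a contradiction — so no admissible function exists and the infimum over the empty set is $+\infty$. This is precisely the mechanism of Example~\ref{ex:MpneMp}(2), which I would cite and adapt: the example there already exhibits length measures on $\{t\}\times[0,1]$ over $t\in(0,1)$ and a choice of $\Cx(X)$ for which no continuous admissible function exists. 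One must be a little careful to present the configuration so that $\Gamma$ itself (not merely a non-closed modification) is $\F_\sigma$: taking $\Gamma=\bigcup_k\Gamma_k$ with each $\Gamma_k$ compact makes $\Gamma$ automatically $\F_\sigma$.

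Finally, the capacitability assertion is essentially a formal consequence once the chain of equalities is in place: $M_{1,c}^A$-capacitability of $\Gamma$ would require $M_{1,c}^A(\Gamma)=\sup\{M_{1,c}^A(\K)\colon \K\subset\Gamma\ \tau_A\text{-compact}\}$, but every compact $\K\subset\Gamma$ is contained in some $\Gamma_k$ (by the increasing-union structure and compactness, $\K$ meets only finitely many ``new'' segments, or one argues via an open cover), so $M_{1,c}^A(\K)\le M_{1,c}^A(\Gamma_k)=0$, whence the supremum is $0\ne\infty=M_{1,c}^A(\Gamma)$. The main obstacle I anticipate is the bookkeeping to ensure simultaneously (a) each $\Gamma_k$ is genuinely $\tau_b$-compact, (b) $M_1$ (not just $M_{1,c}^A$) of the \emph{union} $\Gamma$ is still $0$ — which needs a single lsc admissible function for all of $\Gamma$, easy here since $\rho$ can be taken bounded and independent of $k$ — and (c) that the closure of $\bigcup_k E_k$ genuinely forces the non-existence of a continuous admissible function for the chosen $\Cx(X)$; getting all three to hold with one explicit geometric picture (and matching it to the acceptable-subspace framework of the paper) is where the care lies, though none of it is deep.
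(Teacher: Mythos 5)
There is a genuine gap: your mechanism for $M_{1,c}^A(\Gamma)=\infty$ does not work in the space the theorem prescribes. The theorem fixes $X=\er^2$ with the Lebesgue measure, and in $\er^2$ the vertical line $\{x_1=1\}$ is not ``at infinity'': functions of $C_0(\er^2)$ need not vanish there, and no acceptable subspace $\Cx(\er^2)$ can force all of its members to vanish on that line, since property (A2) applied to a point $x$ with $x_1=1$ produces $g\in\Cx(\er^2)$ with $g(x)=1$. Worse, all your segments $J_t=\{t\}\times[0,1]$ have length $1$, so $\Gamma$ is contained in the $\tau_b$-compact family $\{J_t\colon t\in[0,1]\}$, whose $M_{1,c}^A$ equals its $M_1$ by Theorem~\ref{t:eq} and is at most $2$ (witnessed by a continuous bump equal to $2$ on $[0,1]^2$); by monotonicity $M_{1,c}^A(\Gamma)<\infty$ no matter how $E$ accumulates. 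The boundary-vanishing trick of Example~\ref{ex:MpneMp}(2) is available only because there the underlying space is $(0,1)\times[0,1]$ and the accumulation line lies outside it; transplanting the configuration to $\er^2$ destroys the effect.

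The missing idea is to let the \emph{lengths} of the paths tend to zero while the paths accumulate at a point of $X$ itself. The paper takes $\Gamma_k$ to be the radial segments $\ff_z(t)=tz$, $t\in[0,1]$, with $1/k\le|z|\le1$. Any continuous $\rho$ is bounded near the origin, so $\int_{\ff_z}\rho\,ds\le|z|\sup_{B(0,|z|)}|\rho|\to0$ as $z\to0$; hence no continuous admissible function for $\Gamma=\bigcup_k\Gamma_k$ exists and $M_{1,c}^A(\Gamma)=\infty$, for every acceptable subspace. Meanwhile the lower semicontinuous function $\ep|x|^{-1}\chi_{B(0,1)}$ is admissible for all of $\Gamma$ with $L^1$ norm $2\pi\ep$, giving $M_1(\Gamma)=0$, and the continuous kernels $\rho_j(x)=j\omega(j|x|)$ (with $\omega$ supported in $[0,1]$, $\int\omega=1$) are admissible for the fixed compact family $\Gamma_k$ once $j\ge k$ and have $L^1$ norms tending to $0$, giving $M_{1,c}^A(\Gamma_k)=0$. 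Your remaining steps (compactness of each $\Gamma_k$, the $\F_\sigma$ property of the union, and deducing the failure of capacitability --- most cleanly via $M_{1,c}^A(\K)=M_1(\K)\le M_1(\Gamma)=0$ for every compact $\K\subset\Gamma$, rather than by trying to fit $\K$ inside some $\Gamma_k$) are sound and carry over once the construction is replaced.
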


\begin{proof} For any $k\in\en$, let $\Gamma_k$ be the 
family 
of all paths
$\ff_{z}(t)=tz$, $t\in [0,1]$, 
where $z\in\overline B(0,1)\setminus B(0,1/k)$.
Then $\Gamma_k$ are $\tau_b$-compact.
If $\rho\colon \er^2\to \er$ is continuous and $z_k=(1/k,0)$,
then
$$
\inf_{k}\int_{\ff_{z_k}}\rho\,ds= 0,
$$
so that there is no continuous admissible function for $\Gamma$ and 
$M_{1,c}^A(\Gamma)=\infty$.
On the other hand, for any $\ep>0$, $\ep|x|^{-1}$ is an admissible function
for $\Gamma$, so that $M_1(\Gamma)=0$.
Let $\omega$ be a nonnegative continuous function on $[0,\infty)$ with a support in $[0,1]$
such that $\int_{0}^{\infty}\omega(t)\,dt=1$. 
Write
$$
\rho_j(x)=j\omega(j|x|).
$$
Then, for fixed $k$,
$j\ge k$, and $z\in\overline B(0,1)\setminus B(0,1/k)$, we have
$$
\int_{\ff_z}\rho_j\,ds\ge \int_0^{1/j}j\omega(jr)\,dr=1.
$$
Thus
$\rho_j$, $j\ge k$, are admissible functions for $\Gamma_k$
and
$$
\inf_{j\ge k}\int_{\er^2}\rho_j(x)\,dx=
\inf_{j\ge k}\int_0^{1/j}2\pi r\,j\omega(jr)\,dr=0.
$$
Therefore, $M_{1,c}^A(\Gamma_k)=0$, $k=1,2,\dots$.
\end{proof}

The following theorem is based on a deeper study of an example from \cite{M1}.

\begin{thm}\label{t:m1nec}
There exists an increasing
sequence $\Gamma_k$ of $\tau_b$-compact families of paths in $\er$
such that, denoting $\Gamma=\bigcup_k\Gamma_k$,
$$
AM(\Gamma)\le\lim_{k\to\infty} M_1(\Gamma_k)\le 1,
$$
but
$$
M_1(\Gamma)=\infty.
$$
Thus, $AM(\Gamma)< M_1(\Gamma)$ and $\Gamma$ is $\F_{\sigma}$, but 
not $M_1$-capacitable.
\end{thm}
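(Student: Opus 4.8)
The plan is to construct an explicit increasing sequence of ``collapsing curve'' families in $\er$, in the spirit of the example in \cite{M1}. Take $X=\er$ with reference measure $\meas=\lambda$ the Lebesgue measure, and for $x\in(0,1]$ let $\ff_x\colon[0,1]\to\er$, $\ff_x(t)=tx$, be the segment of length $x$ from $0$ to $x$; its induced measure is $\mu_{\ff_x}=\lambda|_{[0,x]}$. I would set
$$
\Gamma_k=\{\ff_x\colon x\in[1/k,1]\}\qquad\text{and}\qquad \Gamma=\bigcup_{k}\Gamma_k=\{\ff_x\colon x\in(0,1]\}.
$$
Since $x\mapsto\langle\mu_{\ff_x},g\rangle=\int_0^x g\,d\lambda$ is continuous for every $g\in C_b(\er)$, the map $x\mapsto\mu_{\ff_x}$ is $\tau_b$-continuous; its inverse is $\mu\mapsto\mu(\er)$, which is $\tau_b$-continuous by Lemma~\ref{L:taub}(i), so $x\mapsto\mu_{\ff_x}$ is a homeomorphism of $(0,1]$ onto $\Gamma$. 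In particular each $\Gamma_k$ is $\tau_b$-compact, and $(\Gamma_k)$ is increasing with union $\Gamma$.

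The heart of the argument is the pair of modulus estimates. For the upper one, $\rho_k=k\,\chi_{(0,1/k)}$ is lower semicontinuous, admissible for $\Gamma_k$ (if $x\ge 1/k$ then $\int_0^x\rho_k\,d\lambda=1$), and $\|\rho_k\|_{L^1(\lambda)}=1$; hence $M_1(\Gamma_k)\le 1$ for every $k$, so $\lim_k M_1(\Gamma_k)\le 1$, and then $AM(\Gamma)\le\lim_k M_1(\Gamma_k)\le 1$ by Lemma~\ref{Lbasic}. For the lower one, observe that any lsc function $\rho$ admissible for $\Gamma$ must satisfy $\int_0^x\rho\,d\lambda\ge 1$ for every $x\in(0,1]$; were $\rho$ $\lambda$-integrable, absolute continuity of the Lebesgue integral would force $\int_0^x\rho\,d\lambda\to 0$ as $x\to 0^+$, a contradiction. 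Thus every admissible $\rho$ has $\|\rho\|_{L^1(\lambda)}=\infty$, i.e.\ $M_1(\Gamma)=\infty$.

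It remains to read off the stated consequences. Each $\Gamma_k$ is $\tau_b$-compact, hence $\tau_b$-closed, so $\Gamma=\bigcup_k\Gamma_k$ is $\F_\sigma$, and $AM(\Gamma)\le 1<\infty=M_1(\Gamma)$. If $K\subset\Gamma$ is $\tau_b$-compact, then $\{\mu(\er)\colon\mu\in K\}$ is a compact subset of $(0,1]$ (again because $\mu\mapsto\mu(\er)$ is continuous), hence contained in some $[1/k,1]$, so that $K\subset\Gamma_k$ and $M_1(K)\le M_1(\Gamma_k)\le 1$; therefore $\sup\{M_1(K)\colon K\subset\Gamma\ \tau_b\text{-compact}\}\le 1<\infty=M_1(\Gamma)$, i.e.\ $\Gamma$ is not $M_1$-capacitable. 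The only step that requires genuine thought is the choice of the family: one needs $M_1(\Gamma)=\infty$ while $M_1(\Gamma_k)$ stays bounded, and this forces the curves to degenerate to a point — oscillating curves of bounded length can always be absorbed by a single bounded admissible function and so cannot drive $M_1$ to infinity — after which the compactness of the $\Gamma_k$, the bound on $M_1(\Gamma_k)$, and the failure of capacitability are all routine verifications.
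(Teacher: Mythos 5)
Your proof is correct and follows essentially the same route as the paper's: the same family of shrinking segments $[0,r]$ anchored at the origin, an admissible function concentrated on $(0,1/k)$ to bound $M_1(\Gamma_k)$ by $1$ (the paper uses a continuous bump $2^k\eta(2^k x)$ where you use $k\chi_{(0,1/k)}$, an inessential difference), Lemma~\ref{Lbasic} for $AM(\Gamma)\le 1$, and absolute continuity of the integral to force $M_1(\Gamma)=\infty$. You additionally spell out the $\tau_b$-compactness via the map $x\mapsto\mu_{\ff_x}$ and the non-capacitability by showing every $\tau_b$-compact subset of $\Gamma$ lies in some $\Gamma_k$ — details the paper treats as routine — and these verifications are correct.
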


\begin{proof} 
Let
$\Gamma_k$  be the 
family 
of all paths $\ff_{r}(t)=t$, $t\in [0,r]$, where $r\in[ 2^{-k},1]$.
Then $\Gamma_k$ are $\tau_b$-compact.
If $\rho\in L^1(\er)$, then
$$
\inf_{r>0}\int_{\ff_{r}}\rho(t)\,dt= 0,
$$
so that there is no admissible function for $\Gamma$ and 
$M_{1}(\Gamma)=\infty$.
On the other hand, if $\eta\ge 0$ is a continuous function on $[0,\infty)$
with  support on  $[0,\frac12]$ such that $\int_{0}^{\infty}\eta(x)\,dx=1$,
and
$$
\rho_k(x)= 2^{k}\eta(2^{k}x),
$$
then $\rho_k$ is admissible for $\Gamma_k$, $k=1,2,\dots$.
It follows
$$
\liminf_{k\to\infty} M_1(\Gamma_k)\le 1.
$$
By Lemma \ref{Lbasic},
$$
AM(\Gamma)\le\liminf_{k\to\infty} M_1(\Gamma_k)  \le 1. 
$$
\end{proof}

\begin{rmrk} Let $\E_1 \subset \E_2 \subset \, ...$ be a sequence of subsets of $\M^+(X)$ and $\E=\bigcup_j\E_j$. 
Then, in general, 
there is no relation between $M_1(\E)$ and 
$\lim_j M_{1,c}^A(\E_j)$. Indeed, we have
$$
M_1(\E)<
\lim_j M_{1,c}^A(\E_j)
$$
if $\E_j=\E$ and $M_1(\E)<M_{1,c}^A(\E)$ as in Theorem \ref{t:cnec}.
On the other hand, we have
$$
\lim_j M_{1,c}^A(\E_j)< M_1(\E)
$$
if $\E$ is a set with $AM(\E)< M_1(\E)$ 
as in Theorem \ref{t:m1nec}
and $\E_j$ are as in Theorem
\ref{Tinside}.
\end{rmrk}

Next we focus on the failure of property  \eqref{upward}
of Definition~\ref{d:capacity}. We first present a general construction of a counterexample and next we apply it to show that this property fails in most cases for families of measures and at least in some cases also for paths families.

\begin{lemma}\label{l:construction}
Let $X$ be a Polish space equipped with a reference Radon measure $\meas$.
Let $(G_{m,i})_{m,i\in\en}$ be a system of open subsets of $X$ with following properties:
\begin{enumerate}[\rm(a)]
\item $G_{m,1}$, $m\in\en$ are pairwise disjoint.
\item $G_{m,1}\supset G_{m,2}\supset G_{m,3}\supset\dots$ for each $m\in\en$.
\item $\meas (G_{m,i})>0$ for each $m,i\in\en$.
\item $\lim_{i\to\infty}\meas(G_{m,i})=0$ for  each $m\in\en$.
\item $\meas(\bigcup_m G_{m,1})<\infty$.
\end{enumerate}
For each $m\in \en$ and each sequence $\boldsymbol s=(s_n)_{n}$
of integers we consider the set 
$$
H_{m,\boldsymbol s}=\bigcup_{n\ge m}G_{n,s_n}
$$
and denote the restriction of $\meas$ to $H_{m,\boldsymbol s}$ by $\mu_{m,\boldsymbol s}$.
Let $\E_0\subset\M^+(X)$ contain all measures $\mu_{m,\boldsymbol s}$. 
Then there is an increasing sequence $(\E_m)_{m\ge 1}$ of subsets of $\E_0$ that 
$$
AM\Big(\bigcup_m \E_{m}\Big)>\lim_m AM(\E_m).
$$
\end{lemma}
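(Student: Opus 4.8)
\medskip

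The plan is to take, for $m\in\en$,
$$\E_m:=\{\mu_{m',\boldsymbol s}\colon 1\le m'\le m,\ \boldsymbol s\in\en^\en\}\subset\E_0,$$
which is clearly increasing in $m$, with $\E:=\bigcup_m\E_m=\{\mu_{m',\boldsymbol s}\colon m'\in\en,\ \boldsymbol s\in\en^\en\}$. I will show that $AM(\E_m)=1$ for every $m$ while $AM(\E)>1$, which gives the assertion. For the first claim, fix $m$ and set $\rho_j:=\meas(G_{m,j})^{-1}\chi_{G_{m,j}}$; this is non-negative, lsc, and $\|\rho_j\|_{L^1(\meas)}=1$. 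If $1\le m'\le m$ and $\boldsymbol s\in\en^\en$, then $G_{m,s_m}\subset H_{m',\boldsymbol s}$ because $m\ge m'$, while $G_{m,j}$ is disjoint from $G_{n,1}\supset G_{n,s_n}$ for $n\ne m$ by (a) and (b); hence $G_{m,j}\cap H_{m',\boldsymbol s}=G_{m,\max(j,s_m)}$, so by (b)
$$\int_X\rho_j\,d\mu_{m',\boldsymbol s}=\meas(G_{m,j})^{-1}\meas\bigl(G_{m,\max(j,s_m)}\bigr)=1\qquad\text{whenever }j\ge s_m.$$
Thus $(\rho_j)_j$ is admissible for $\E_m$ with $\liminf_j\|\rho_j\|_{L^1(\meas)}=1$, so $AM(\E_m)\le 1$. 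Conversely $\mu_{1,\boldsymbol1}$ (where $\boldsymbol1=(1,1,\dots)$) is the restriction of $\meas$ to $\bigcup_nG_{n,1}$, a set of positive finite $\meas$-measure by (c) and (e); for any admissible sequence $(\sigma_j)_j$ one has $\|\sigma_j\|_{L^1(\meas)}\ge\int_X\sigma_j\,d\mu_{1,\boldsymbol1}$, hence $\liminf_j\|\sigma_j\|_{L^1(\meas)}\ge 1$, so $AM(\E_m)\ge AM(\{\mu_{1,\boldsymbol1}\})\ge 1$. Therefore $AM(\E_m)=1$ for all $m$ and $\lim_mAM(\E_m)=1$.

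The inequality $AM(\E)>1$ is the heart of the matter, and I would prove it by contradiction. Suppose $(\rho_j)_j$ is admissible for $\E$ with $\liminf_j\|\rho_j\|_{L^1(\meas)}$ finite; passing to a subsequence (still admissible) we may assume $\|\rho_j\|_{L^1(\meas)}\le C<\infty$ for all $j$. The goal is then to exhibit a single $\mu\in\E$ with $\liminf_j\int_X\rho_j\,d\mu<1$. Two features of the situation will be exploited. First, each $\rho_j\in L^1(\meas)$, and since the sets $G_{n,1}$ are disjoint with $\meas(\bigcup_nG_{n,1})<\infty$ by (a),(e) and $\bigcup_nG_{n,i}$ decreases to a $\meas$-null set as $i\to\infty$ by (b),(d), the $\meas$-mass of $\rho_j$ on a set $H_{m,\boldsymbol s}$ can be made arbitrarily small by taking the levels $s_n$ large (depending on $j$) and $m$ large; in particular, for each $j$ there is $\boldsymbol s^{(j)}$ with $\int_X\rho_j\,d\mu_{1,\boldsymbol s^{(j)}}<\tfrac12$. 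Second, admissibility of $(\rho_j)_j$ for the ``full‑tail'' measures $\mu_{m,\boldsymbol1}$ forces $\liminf_j\int_{\bigcup_{n\ge m}G_{n,1}}\rho_j\ge1$ for every $m$; i.e.\ the mass of $\rho_j$ is pushed into blocks of ever higher index as $j\to\infty$.

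The remaining step is to promote the family $\{\mu_{1,\boldsymbol s^{(j)}}\}_j$ to one fixed ``bad'' measure, and this is the main obstacle. If the cut‑off levels $N_j$ (the least $N$ with $\int_{\bigcup_{n>N}G_{n,1}}\rho_j<\tfrac14$) remain bounded, say $N_j\le N$ along an infinite set of indices $j$, then already $\mu_{N+1,\boldsymbol1}\in\E$ satisfies $\int_X\rho_j\,d\mu_{N+1,\boldsymbol1}=\int_{\bigcup_{n>N}G_{n,1}}\rho_j<\tfrac14$ for those $j$, contradicting admissibility. Otherwise $N_j\to\infty$, and I would build $\mu=\mu_{1,\boldsymbol s}$ by an interleaved recursion, simultaneously choosing a subsequence $j_1<j_2<\dots$ and prescribing the levels $s_n$ on successive blocks of indices: at step $k$ one first picks $j_k$ large, then a block threshold $p_k>p_{k-1}$ so large that $\rho_{j_i}$ has mass $<\ep2^{-k}$ on $\bigcup_{n>p_k}G_{n,1}$ for all $i\le k$, then the levels $s_n$ for $p_{k-1}<n\le p_k$ so large (a finite requirement, using (b),(d)) that $\rho_{j_1},\dots,\rho_{j_k}$ all have mass $<\ep2^{-k}$ on $\bigcup_{p_{k-1}<n\le p_k}G_{n,s_n}$. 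The delicate point is ensuring that the already‑built portion $\bigcup_{n\le p_{k-1}}G_{n,s_n}$ carries little mass of $\rho_{j_k}$; this is where one must use the ``mass flows outward'' feature (and the choice of $j_k$) carefully to keep that contribution controlled. The outcome is $\int_X\rho_{j_k}\,d\mu\to0$, so $\liminf_j\int_X\rho_j\,d\mu=0<1$, the required contradiction. Hence $AM(\E)>1=\lim_mAM(\E_m)$, proving the lemma. (In fact the same argument yields $AM(\E)=\infty$.)
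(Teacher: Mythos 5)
Your choice of $\E_m=\{\mu_{m',\boldsymbol s}\colon m'\le m\}$ and the verification that $AM(\E_m)=1$ via the single-block sequence $g_{m,j}=\meas(G_{m,j})^{-1}\chi_{G_{m,j}}$ are correct and essentially identical to the paper's first step (the paper uses the slightly larger sets $\E_m=\bigcap_{n\ge m}\{\mu\in\E_0\colon\liminf_i\int_X g_{n,i}\,d\mu\ge1\}$, but on the measures $\mu_{m',\boldsymbol s}$ these coincide with yours). The problem is the second half. You set out to contradict an arbitrary bound $\|\rho_j\|_1\le C$, aiming at $AM(\E)=\infty$, and the step you yourself flag as ``the delicate point'' --- controlling $\int_{\bigcup_{n\le p_{k-1}}G_{n,s_n}}\rho_{j_k}$ --- is a genuine gap, not a routine detail. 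The only estimate your ``mass flows outward'' feature provides is $\int_{\bigcup_{n\le p_{k-1}}G_{n,1}}\rho_{j_k}\le\|\rho_{j_k}\|_1-(1-\ep)\le C-1+\ep$: admissibility against $\mu_{p_{k-1}+1,(1,1,\dots)}$ forces at least $1-\ep$ of the mass into the tail blocks, but nothing prevents every $\rho_j$ with $j$ large from carrying an additional ``parasitic'' mass of $C-1$ concentrated exactly on the already-fixed set $\bigcup_{n\le p_{k-1}}G_{n,s_n}$; that set was chosen before $\rho_{j_k}$ is inspected, and no choice of $j_k$ can avoid it if all large indices behave this way. Consequently the claimed outcome $\int_X\rho_{j_k}\,d\mu\to0$ is unobtainable, for $C\ge2$ the recursion yields no contradiction at all, and the parenthetical claim that the argument gives $AM(\E)=\infty$ is unjustified.

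The repair is to lower the ambition to exactly what the lemma needs. Since $\lim_mAM(\E_m)=1$, it suffices to prove $AM(\E)\ge2$, so you may assume from the start that $\|\rho_j\|_1<2(1-\ep)$ for every $j$. Then the bound above becomes $\int_{\bigcup_{n\le p_{k-1}}G_{n,1}}\rho_{j_k}<1-\ep$, which dominates the contribution of the whole head of your diagonal measure, and together with your two tail estimates ($<\ep2^{-k}$ each) gives $\int_X\rho_{j_k}\,d\mu<1-\ep+2\ep 2^{-k}<1$, the desired contradiction. This is precisely how the paper proceeds: it fixes thresholds $p_m$ from the measures $\mu_{m,(1,1,\dots)}$, chooses increasing depths $q_m\ge p_m$ with $\int_{G_{m,q_m}}h_k\,d\meas<\ep2^{-m-1}$ for $k\le q_m$, and splits $\int_X h_k\,d\mu_{1,\boldsymbol q}$ into the blocks with $q_m<k$ (bounded by the head mass, $<1-\ep$) and those with $q_m\ge k$ (bounded by $\ep/2$). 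The factor $2$ in the hypothesis is what makes the head estimate nontrivial, and it is lost in your version.
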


\begin{proof}
Set 
$$g_{m,i}=\frac{1}{\meas(G_{m,i})}\chi_{G_{m,i}},\qquad m,i\in\en.$$
Then each $g_{m,i}$ is a lower semicontinuous function satisfying $\|g_{m,i}\|_1=1$.
Set
$$\E_m=\bigcap_{n\ge m}\left\{\mu\in\E_0\colon \liminf_i \int_X g_{n,i}\,d\mu\ge 1\right\}.$$
By the very definition the sequence 
$(g_{m,i})_i$ 
is admissible for $\E_m$, hence $AM(\E_m)\le1$.
Clearly $\E_{1}\subset \E_2\subset \E_{3}\subset\dots$.
Set $\E=\bigcup\limits_{m}\E_m$.
We are going to show that $AM(\E)\ge2$.

Assume that $AM(\E)<2$. Then there is a sequence $(h_k)_k$ admissible for $\E$ such that $\liminf \|h_k\|_1<2$. 
Since a subsequence of an admissible sequence is again admissible we may assume that there is some $\varepsilon>0$ such that $\|h_k\|_1<2(1-\varepsilon)$ for each $k\in\en$.
For each $m\in\en$ let $\mu_m$ be the restriction of $\meas$ to
$\bigcup_{n \ge m} G_{n,1}$. 
Then $\mu_m=\mu_{m,\boldsymbol s}$ for $\boldsymbol s=(1,1,1,\dots)$, therefore
$\mu_m\in \E_0$. 
We have
$$\int_X g_{n,i} \,d\mu_m=1
\mbox{ for }n\ge m,
\ i\in\en,$$
thus $\mu_m\in \E_{n} \subset \E$ 
for $n\ge m$.
It follows that $\liminf_k \int_X h_k\,d\mu_m\ge 1$. Thus there is
$p_m\in\en$ such that for each $k\ge p_m$ we have
$$1-\varepsilon<\int_X h_k\,d\mu_m=\int_{\bigcup\limits_{n\ge m}G_{n,1}} h_k\,d\meas.$$
It follows that for $k\ge p_m$ we have
$$\int_{\bigcup\limits_{n< m}G_{n,1}} h_k\,d\meas
\le \|h_k\|_1-\int_{\bigcup\limits_{n\ge m}G_{n,1}} h_k\,d\meas<1-\varepsilon.$$

Now, using (d) we find an increasing sequence $\boldsymbol q=(q_m)_m$ of  positive integers such that
$q_m\ge p_m$ and 
$$\int_{G_{m,q_m}} h_k\,d\meas<\frac{\ep}{2^{i+1}}\mbox{ for }k\le q_m.
$$
Let $\nu$ be the the restriction of $\meas$ to $\bigcup_m G_{m,q_m}$. 
Then $\nu=\mu_{1,\boldsymbol q}\in \E_0$. 
Moreover, $\nu\in \E_1\subset\E$ as
$$\int_X g_{m,i}\, d\nu=1,\qquad
i\ge q_m.$$
Then  for each $k\in\en$ we have
$$\begin{aligned}
\int_X h_k\,d\nu&=\int_{\bigcup_m G_{m,q_m}} h_k\,d\meas
\\&\le \int_{\bigcup\limits_{\{m\colon q_m<k\}} G_{m,q_m}} h_k\,d\meas +
\int_{\bigcup\limits_{\{m\colon q_m\ge k\}} G_{m,q_m}} h_k\,d\meas
\\&<1-\ep+\sum_{i=1}^\infty\frac{\ep}{2^{i+1}}=1-\frac\ep2.
\end{aligned}
$$
Therefore $(h_k)_k$ cannot be admissible for $\E$, which is a contradiction
completing the proof.
\end{proof}

\begin{rmrk}\label{r:borel} 
It is clear that the families $\E_m$ constructed in the proof of the above lemma are Borel subsets of 
$\E_0$.
\end{rmrk}

In the following theorem we show that the modulus $AM$  satisfies property \eqref{upward} from Definition~\ref{d:capacity} is fulfilled only in trivial cases. We formulate the result as an equivalence to provide a complete picture, but of course, the `only if part' is more important
for the theory.

\begin{thm}\label{t:nonincr-meas}
Let $X$ be a Polish space equipped with a reference Radon measure $\meas$. Then the modulus $AM$  satisfies property \eqref{upward} from Definition~\ref{d:capacity} in $\M^+(X)$ if and only if 
$$\meas=\sum_{x\in F} c_x\delta_{x},$$
where $F$ is a closed discrete subset of $X$ (finite or infinite) and $(c_x)_{x\in F}$ are positive numbers bounded below by some $c>0$.
\end{thm}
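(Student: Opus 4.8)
The plan is to prove the two implications separately; the \emph{only if} direction is the substantial one and will be obtained by verifying the hypotheses of Lemma~\ref{l:construction}.

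\textbf{The \emph{if} direction.} Assume $\meas=\sum_{x\in F}c_x\delta_x$ with $F$ closed discrete and $c_x\ge c>0$; note $F$ is countable since $X$ is separable. I claim that then $AM=M_1$ on \emph{every} subset of $\M^+(X)$, after which Corollary~\ref{C2} immediately yields \eqref{upward}. For the claim, observe that for a nonnegative lsc $\rho\colon X\to[0,\infty]$ the function $\rho^\sharp$ that equals $\rho$ on $F$ and $+\infty$ on $X\setminus F$ is again lsc (because $X\setminus F$ is open and each $x\in F$ has a neighbourhood meeting $F$ only at $x$), satisfies $\rho^\sharp\ge\rho$, and has $\|\rho^\sharp\|_{L^1(\meas)}=\|\rho\|_{L^1(\meas)}$ because $\meas$ is carried by $F$. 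Hence in both moduli one may restrict attention to admissible (sequences of) functions of this special form. Now take an admissible sequence $(\rho_j^\sharp)$ for some $\E$ with $\liminf_j\|\rho_j^\sharp\|_{L^1(\meas)}<\infty$, and pass to a subsequence along which the norms converge to this $\liminf$. Since $\sum_{x\in F}c_x\rho_j(x)$ is then bounded and $c_x\ge c$, the numbers $\rho_j(x)$ are uniformly bounded for $x\in F$, so a diagonal argument over the countable set $F$ gives a further subsequence along which $\rho_j(x)\to\rho(x)\in[0,\infty)$ for each $x\in F$; set $\rho=+\infty$ off $F$. By Fatou's lemma $\|\rho\|_{L^1(\meas)}\le\liminf_j\|\rho_j^\sharp\|_{L^1(\meas)}$, and $\rho$ is admissible for $\E$: if $\mu\in\E$ is not carried by $F$ then $\int\rho\,d\mu=\infty$, while if it is carried by $F$ the uniform bound on $F$ permits dominated convergence, giving $\int\rho\,d\mu=\lim_j\int\rho_j^\sharp\,d\mu\ge\liminf_j\int\rho_j^\sharp\,d\mu\ge1$. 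Thus $M_1(\E)\le AM(\E)$, and since the reverse inequality always holds, $M_1=AM$.

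\textbf{The \emph{only if} direction.} Suppose $\meas$ is not of the stated form; since a sum of Dirac masses of the stated form is exactly a locally finite purely atomic measure whose atoms form a closed discrete set with masses bounded below, $\meas$ falls into one of two cases, and in each I construct open sets $(G_{m,i})_{m,i\in\en}$ with properties (a)--(e) of Lemma~\ref{l:construction}; applying that lemma with $\E_0=\M^+(X)$ then produces an increasing sequence $(\E_m)$ with $AM(\bigcup_m\E_m)>\lim_m AM(\E_m)$, contradicting \eqref{upward}. \emph{Case 1: $\meas$ has a nonzero continuous part $\meas_c$.} Then $\spt\meas_c$ is a nonempty perfect Polish space, hence uncountable, so it contains uncountably many non-atoms of $\meas$. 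Fix $x_0\in\spt\meas_c$ and $R>0$ with $\meas(B(x_0,R))<\infty$, and choose inductively non-atoms $z_m\in\spt\meas_c\cap B(x_0,R)$ together with radii $\sigma_m$ so that the closed balls $\overline{B(z_m,\sigma_m)}$ lie in $B(x_0,R)$ and are pairwise disjoint (possible since a nonempty relatively open subset of the perfect set $\spt\meas_c$ has more than one point). For each $m$, $z_m$ being a non-atom in $\spt\meas_c$ gives $\meas(B(z_m,s))\to0$ as $s\to0$ while $\meas(B(z_m,s))>0$ for every $s>0$; pick $\sigma_m=s_{m,1}>s_{m,2}>\dots\searrow0$ and set $G_{m,i}=B(z_m,s_{m,i})$. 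Properties (a)--(e) are immediate, (e) because $\bigcup_m G_{m,1}\subset B(x_0,R)$. \emph{Case 2: $\meas$ is purely atomic with atoms of arbitrarily small mass} (this covers both the failure of the lower mass bound and the failure of closed discreteness, the latter because local finiteness forces the masses of atoms accumulating at a point to tend to $0$). Choose distinct atoms $a_1,a_2,\dots$ with $\meas(\{a_k\})<2^{-k}$. Passing to a subsequence, assume $\{a_k\}$ is either closed discrete or a convergent sequence; in either case the points admit pairwise disjoint open neighbourhoods $N_k$ with $\meas(N_k)<\meas(\{a_k\})+2^{-k}$ (shrink suitable balls, using $\meas(B(a_k,\rho))\to\meas(\{a_k\})$ as $\rho\to0$). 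Split $\en$ into infinite sets $P_m=\{k_{m,1}<k_{m,2}<\dots\}$ and put $G_{m,i}=\bigcup_{l\ge i}N_{k_{m,l}}$. Then (a), (b) are clear, (c) holds since $G_{m,i}$ contains the atom $a_{k_{m,i}}$, (d) follows from $\meas(G_{m,i})\le\sum_{l\ge i}2^{1-k_{m,l}}\le\sum_{l\ge i}2^{1-l}\to0$, and (e) from $\meas(\bigcup_m G_{m,1})\le\sum_k 2^{1-k}<\infty$.

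\textbf{Main obstacle.} The soft part is the \emph{if} direction, where the only trick is the reduction to functions that are finite only on $F$, after which Fatou, dominated convergence, and Corollary~\ref{C2} do the work. The real difficulty is in the \emph{only if} direction: one must produce the double family $(G_{m,i})$ with finite total mass, and in the purely atomic case this requires locating a sequence of atoms that simultaneously has summable masses tending to zero and admits pairwise disjoint open neighbourhoods of nearly minimal measure — which is exactly what forces the case distinction according to whether those atoms form a closed discrete set or accumulate.
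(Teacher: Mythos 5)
Your proof is correct and follows the same overall architecture as the paper's --- for the \emph{if} part, reduce to measures carried by $F$, prove $M_1=AM$ there, and invoke Corollary~\ref{C2}; for the \emph{only if} part, build a family $(G_{m,i})$ satisfying (a)--(e) and feed it to Lemma~\ref{l:construction} --- but both halves are executed differently. In the \emph{if} direction the paper identifies $\M(F)$ with $\ell_1=c_0^*$ and extracts a weak$^*$-convergent subsequence of the admissible sequence, testing against $\varkappa(\mu)\in c_0$; your diagonal argument with the uniform bound $\rho_j(x)\le\|\rho_j\|_1/c$, Fatou for the norm, and dominated convergence for admissibility is the elementary, self-contained version of the same compactness phenomenon, and your $\rho^\sharp$ device (extend by $+\infty$ off $F$) replaces the paper's reduction step $AM(\E)=AM(\{\mu\in\E\colon\mu(X\setminus F)=0\})$; both are valid, and yours avoids any functional-analytic machinery. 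In the \emph{only if} direction the paper splits according to whether $\spt\meas$ is discrete, in both cases producing a sequence of disjoint open sets $U_n$ with $\meas(U_n)>0$ and $\sum_n\meas(U_n)<\infty$ and then setting $G_{m,i}=\bigcup_{j\ge i}U_{p_m^j}$ for distinct primes $p_m$; your split into ``nonzero continuous part'' versus ``purely atomic with arbitrarily small atoms'' is equivalent (a Radon measure with discrete support is exactly a sum of point masses on a closed discrete set, and non-closed-discreteness of the atom set forces small atoms by local finiteness), your Case~2 construction is essentially the paper's, and your Case~1 construction (nested balls $B(z_m,s_{m,i})$ around non-atoms of $\spt\meas_c$) is a genuinely different and arguably more geometric way to realize (a)--(e), exploiting that a single support point which is a non-atom already provides the whole decreasing chain in the index $i$. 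The one place where your write-up is terse is the inductive choice of the $z_m$ and $\sigma_m$ in Case~1: to keep the induction alive you must choose each $\sigma_m$ small enough that $\overline{B(z_m,\sigma_m)}$ leaves at least one point of $\spt\meas_c\cap B(x_0,R)\setminus\bigcup_{j<m}\overline{B(z_j,\sigma_j)}$ uncovered (so that this relatively open trace of the perfect set $\spt\meas_c$ stays nonempty, hence uncountable, hence contains a further non-atom); your parenthetical gestures at exactly this, and the argument goes through, but it deserves to be spelled out.
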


\begin{proof}
Let us start by the `if part'. Assume that $\meas$ has the required form. First observe, that for each $\E\subset\M^+(X)$ we have
$$AM(\E)=AM(\{\mu\in \E\colon \mu(X\setminus F)=0\}).$$
Indeed, the inequality $\ge$ is obvious. To prove the converse one fix any sequence $(h_k)$ admissible for $\{\mu\in \E\colon \mu(X\setminus F)=0\}$ and observe that the sequence $(h_k+k\chi_{X\setminus F})$ is 
admissible
for $\E$ and $\|h_k+k\chi_{X\setminus F}\|_1=\|h_k\|_1$ for each $k\in\en$. Hence the inequality $\le$ follows as well.

Hence, to complete the proof of the `if part' we may assume that $F=X$. We give the proof in case $X$ is infinite. The proof in case $X$ is finite is analogous (and easier). Hence we may assume $F=X=\en$. Then $\M(X)$ is canonically identified with the Banach space $\ell_1$
through the mapping $\varkappa\colon \M(X)\to\ell_1$ defined by $\varkappa(\mu)(k)=\mu(\{k\})$.
(We consider $\ell_1$ rather as a space of functions than a space of sequences as the comparison
with $L^1(\meas)$ is then more accurate.)
 We will show that $AM(\E)=M_1(\E)$ for any $\E\subset\M^+(X)$. Since the inequality $\le$ holds always, it is enough to  prove the converse one. If $AM(\E)=\infty$, the inequality is obvious. So, assume that $AM(\E)<\infty$ and fix any $\ep>0$. Then there is 
 an admissible sequence $(\rho_n)_n$ for $\E$ 
such that $\liminf\|\rho_n\|_1<AM(\E)+\ep$. 
 Up to passing to a subsequence we may assume that $\|\rho_n\|_1<AM(\E)+\ep$ for each $n\in\en$. 
Since $\|f\|_1\ge c\|f\|_{\ell_1}$ for each $f\in L^1(\meas)$, 
we deduce that the sequence $(\rho_n)$ is bounded in $\ell_1$.
Therefore, up to passing to a subsequence we may assume that the sequence $(\rho_n)_n$ 
converges to some $\rho\in\ell_1$
in the weak* topology of $c_0^*$.
In particular, $\rho_n\to \rho$ pointwise, hence $\|\rho\|_1\le AM(\E)+\ep$ by the Fatou lemma. Further, 
$\varkappa(\mu)\in \ell^1\subset c_0$ for 
any $\mu\in \E$, thus
$$\int_{\en} \rho\,d\mu=\langle \rho,\varkappa(\mu)\rangle=\lim_n \langle \rho_n,\varkappa(\mu)\rangle=
\lim_n \int_{\en} \rho_n\,d\mu\ge1,$$
so that 
$\rho$ is admissible for $\E$. It follows that $M_1(\E)\le AM(\E)+\ep$. Since $\ep>0$ is arbitrary, we get $M_1(\E)\le AM(\E)$, thus $M_1(\E)=AM(\E)$. 

Having proved $M_1=AM$, the validity of property \eqref{upward} of Definition~\ref{d:capacity} follows from Corollary~\ref{C2}.
\smallskip

We continue by proving the `only if part'. Assume that $\meas$ is not of the given form. Then there are two possibilities:

Case 1: The support of $\meas$ is not discrete. It follows that there is a one-to-one sequence $(x_n)$ in the support of $\meas$ converging to some $x\in X$. Since $\meas$ is finite on the compact set $\{x\}\cup\{x_n\colon n\in\en\}$, necessarily $\sum_n \meas(\{x_n\})<\infty$. Now, using outer regularity of $\meas$ it is easy to find a disjoint sequence $(U_n)$ of open sets such that $\meas(U_n)>0$ for each $n\in\en$ and $\sum_n\meas(U_n)<\infty$.

Case 2: The measure $\meas$ has the above form with $c_x>0$ but $\inf_x c_x=0$. Then one can choose a sequence $(x_n)$ in the support with $\sum_n c_{x_n}<\infty$. Again, using outer regularity of $\meas$ it is easy to find a disjoint sequence $(U_n)$ of open sets such that $\meas(U_n)>0$ for each $n\in\en$ and $\sum_n\meas(U_n)<\infty$.

From the sequence $(U_n)$ constructed in both cases one may easily construct a family $(G_{m,i})$ with properties (a)--(e) from Lemma~\ref{l:construction}. 
For example, if $(p_m)_m$ is a sequence of distinct primes, we may set
$$
G_{m,i}=\bigcup_{j\ge i}U_{p_m^j}.
$$
This completes the proof.
\end{proof}

Lemma~\ref{l:construction} may be also used to provide a counterexample for families of paths given in the following statement.

\begin{example}\label{ex:nonincr-paths}
There is a compact metric space $X$ equipped with a (doubling) reference Radon measure $\meas$ such that the modulus $AM$ fails property  \eqref{upward} of Definition~\ref{d:capacity} on families of paths.
\end{example}

\begin{proof}
The space $X$ will be constructed as a suitable subset of $\er^2$. 
Denote 
$$
L_m= \{x=(x_1,x_2)\in  \er^2: \, x_1 \in [0,2^{-m}],\ x_2=x_1/m  \},
$$
a collection of line segments emerging from $0$, and set 
$$
X=
\bigcup_{m=1}^{\infty}L_m.
$$
It is clear that $X$ is a compact subset of $\er^2$.
Let $\meas$ be the double of the linear measure. 
(The linear measure itself would serve as well, but with its double we can use
Lemma \ref{l:construction} directly.)
Note that 
$\meas(X) < \infty$ and  $\meas$ is a doubling
measure in $X$, 
see Remark \ref{r:doubling}.
Further, for $m,i\in\en$ set
$$G_{m,i}=\{x\in L_m\colon 0<x_1<2^{-m-i+1}\}.$$
These sets are open subsets of $X$ and, moreover, they satisfy conditions (a)--(e) from Lemma~\ref{l:construction}. Moreover, each of the measures $\mu_{m,\boldsymbol s}$ described in the lemma is provided by a path (we may take a path
 which runs twice over 
 $\{x\in L_n\colon x_1\le 2^{-n-s_n+1}\}$ 
 for each 
 $n\ge m$). 
Thus we may conclude by using Lemma~\ref{l:construction}.
 \end{proof}

\begin{rmrk}\label{r:doubling}
It is easy to observe that the measure $\meas$ from Example \label{ex:nonincr-paths} is doubling,
which is interesting for experts in analysis on metric measure spaces.
It is obvious that $\meas(B(0,2r))\le 2\meas B(0,r)$ for any $r>0$ and that $\meas(B(x,2r))\le 2\meas B(x,r)$
if $x\in X$ and $r$ is sufficiently small. The function
$$
f(x,r)=
\begin{cases}
 \frac{\meas(B(x,2r))}{\meas(B(x,r))},& r> 0,\\
 2,& r=0
 \end{cases}
$$
is upper semicontinuous on the compact space $X\times [0,2]$ and thus it attains a maximum.
\end{rmrk}

\begin{rmrk}
Let us remark that while failure of property \eqref{upward} from Definition~\ref{d:capacity} for families of measures is a rather common feature by Theorem~\ref{t:nonincr-meas}, the counterexample for families of paths is rather special. The reason is the use of Lemma~\ref{l:construction} where suitable restrictions of the reference measure play a key role. So, the example is designed in such a way that these
restrictions are measures generated by paths. 
This approach is impossible 
in $\rn$.
So, the question whether property \eqref{upward} from Definition~\ref{d:capacity} 
holds for families of paths in 
$\rn$ remains open.
\end{rmrk}

\begin{thm}\label{t:nonouter}
$AM$ is not outer regular on $(\M^+(X),\taux)$.
\end{thm}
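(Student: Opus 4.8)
The plan is to exhibit a family $\E\subset\M^+(X)$ for which $AM(\E)$ is strictly smaller than the infimum of $AM(\G)$ over open $\G\supset\E$. The natural strategy is to reuse the phenomenon already isolated in Theorem~\ref{t:nonincr-meas} and Lemma~\ref{l:construction}: there we found an increasing sequence $(\E_m)$ with $AM(\bigcup_m\E_m)>\lim_m AM(\E_m)$. Take $\E=\bigcup_m\E_m$ (or rather a single member/limit object witnessing the jump) and observe that any $\taux$-open set containing $\E$ must, because of the way the $\E_m$ sit inside $\E_0$, already swallow a ``large'' piece on which the modulus is forced up. Concretely, I would first recall the construction of $(G_{m,i})$ and the measures $\mu_{m,\boldsymbol s}$ from Lemma~\ref{l:construction}, pick the reference measure $\meas$ so that the construction applies (e.g.\ $X$ non-discrete, or just $X=\er$ or $X=(0,1)$ with Lebesgue measure as in the proof of Theorem~\ref{t:nonincr-meas}), and set $\E=\{\nu\}$ or a countable family whose $AM$-modulus is $0$ yet every $\taux$-neighborhood of it contains measures $\mu_{m,\boldsymbol s}$ accumulating in a way that pushes the modulus of any open superset to a positive (even infinite) value.

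The key steps, in order, would be: (1) fix the data $X$, $\meas$ and the open sets $G_{m,i}$ exactly as in Lemma~\ref{l:construction}; (2) identify the ``limit'' measure, namely $\nu=\mu_{1,\boldsymbol q}$ for a rapidly chosen sequence $\boldsymbol q$, or the measures $\mu_m$ (restrictions of $\meas$ to $\bigcup_{n\ge m}G_{n,1}$), and set $\E$ to be the singleton or countable set built from these, so that $AM(\E)=0$ because each such measure individually is carried by the admissible sequence $(g_{n,i})_i$; (3) show that for every $\taux$-open $\G\supset\E$ one has $AM(\G)$ bounded below away from $0$: here one uses that $\mu\mapsto\langle\mu,g\rangle$ being continuous for $g\in\Cx(X)$, a $\taux$-neighborhood of $\mu_m$ cannot distinguish $\mu_m$ from nearby measures $\mu_{m,\boldsymbol s}$ obtained by slightly shrinking finitely many $G_{n,1}$ to $G_{n,s_n}$, so $\G$ must contain a whole ``tail'' family of such $\mu_{m,\boldsymbol s}$; (4) combine this with the estimate in the proof of Lemma~\ref{l:construction} showing that no admissible sequence of small $L^1$-norm can serve all the $\mu_{m,\boldsymbol s}$ simultaneously, concluding $AM(\G)\ge 2$ (or $=\infty$) for every open $\G\supset\E$ while $AM(\E)=0$.

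I expect the main obstacle to be step~(3): verifying that every $\taux$-open superset of the chosen $\E$ necessarily contains enough of the family $\{\mu_{m,\boldsymbol s}\}$ to trigger the lower bound. The subtlety is that $\taux$-neighborhoods are governed by finitely many test functions $g_1,\dots,g_N\in\Cx(X)$, and one must check that perturbing $\mu_m$ to $\mu_{m,\boldsymbol s}$ (i.e.\ replacing $G_{n,1}$ by the smaller $G_{n,s_n}$ for large $n$) changes $\langle\cdot,g_j\rangle$ by an arbitrarily small amount — this uses property (d) of Lemma~\ref{l:construction}, that $\meas(G_{m,i})\to 0$, together with boundedness of each $g_j$. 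Once that continuity/approximation fact is in place, the argument reduces to a bookkeeping repetition of the contradiction already carried out in Lemma~\ref{l:construction}, so I would organize the write-up to quote that lemma's proof scheme rather than redo the combinatorics. A secondary point to be careful about is choosing $X$ and $\meas$ so the whole construction is legitimate in a Polish space with a Radon reference measure — but, as in Theorem~\ref{t:nonincr-meas}, taking $X=\er$ with Lebesgue measure suffices, so this is routine.
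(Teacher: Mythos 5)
Your overall strategy --- exhibit $\E$ such that every $\taux$-open superset is forced to contain extra measures that drive the modulus up --- is the right one, and it is in fact the mechanism of the paper's proof (which uses the much simpler family $\Gamma=\{\meas|_{[0,\ell]}\colon \ell\in(0,1]\}$ in $\er$: any open $\G\supset\Gamma$ must contain some $[\delta_1,1]$ because $[\delta_1,1]\to[0,1]$ in $\tau_b$, and then the two members $[0,\delta_1]\in\Gamma$ and $[\delta_1,1]\in\G$ have disjoint supports, so any admissible sequence for $\G$ has $\liminf\|g_k\|_1\ge 2>1=AM(\Gamma)$). However, your step (3) as stated contains a genuine error. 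Replacing finitely many $G_{n,1}$ by $G_{n,s_n}$ does \emph{not} produce a measure $\taux$-close to $\mu_m$: the difference is $\meas|_{\bigcup(G_{n,1}\setminus G_{n,s_n})}$, whose total mass is close to $\sum\meas(G_{n,1})$ over the shrunk indices, hence not small. What a $\taux$-neighborhood of $\mu_m$ actually absorbs are the \emph{tail} perturbations, i.e.\ measures agreeing with $\mu_m$ on $G_{m,1},\dots,G_{K,1}$ and arbitrary beyond $K$, since $\sum_{n>K}\meas(G_{n,1})\to0$. This breaks step (4): the contradiction in Lemma~\ref{l:construction} hinges on the measure $\nu=\mu_{1,\boldsymbol q}$ in which \emph{every} coordinate is drastically shrunk, and that measure need not lie in an open superset of your $\E$, so you cannot simply ``quote the lemma's proof scheme.'' A secondary error: $AM(\E)=0$ is impossible for your $\E$, since each $\mu_{m,\boldsymbol s}$ is a restriction of $\meas$, whence $\int\rho\,d\mu_{m,\boldsymbol s}\le\|\rho\|_{L^1(\meas)}$ and $AM(\E)\ge1$.

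The gap is repairable, but by a different bookkeeping than you propose: for each $m$ choose $K_m$ so large that the truncation $\nu_m:=\meas|_{\bigcup_{m\le n\le K_m}G_{n,1}}$ lies in the given neighborhood of $\mu_m$ inside $\G$, then pass to indices $m_1=1$, $m_{i+1}=K_{m_i}+1$ so that the $\nu_{m_i}$ have pairwise disjoint supports; any admissible sequence for $\G$ then satisfies $\liminf_k\|h_k\|_1\ge N$ for every $N$, giving $AM(\G)=\infty$ while $AM(\E)=1$. This is a disjoint-support argument, not the Lemma~\ref{l:construction} combinatorics, and once you see it you realize the elaborate $(G_{m,i})$ machinery is unnecessary --- the paper's one-dimensional segment example does the same job in three lines.
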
 

\begin{proof}
Since $\tau_b$ is finer than $\taux$, it is enough to consider $\tau_b$-openness.
Let $X=\er$ and $\meas$ be the Lebesgue measure. 
We identify any closed 
intervals $I\subset\er$ with the one-to one path with locus $I$
(or with the restriction of the Lebesgue measure on $I$).
Let $\Gamma$ be the family of all paths $[0,\ell]$, $\ell\in (0,1]$.
Then $AM(\Gamma)= 1$. Indeed, the inequality $AM(\Gamma)\ge 1$ is 
obvious and irrelevant. For the converse we use
the admissible sequence $(\rho_k)_k$, where
$$
\rho_k(x)=
\begin{cases}
k,& x\in (0,\frac1k),
\\
0&\text{otherwise.} 
\end{cases}
$$
Choose an $\tau_b$-open set 
$\G\subset\M^+(\er)$ 
such that $\Gamma\subset\G$.
Then there exists $\delta_1>0$ such that $[\delta_1,1]$ belongs to $\G$. 
Let $(g_k)_k$ be an admissible sequence for $\G$.
Then 
$$
\liminf_{k}\int_{\delta_1}^1g_k\,dx\ge 1,
$$
but since $[0,\delta_1]\in\Gamma$, we have also
$$
\liminf_{k}\int_0^{\delta_1}g_k\,dx\ge 1.
$$
Altogether,
$$
\liminf_{k}\int_0^1g_k\,dx\ge 2,
$$
which shows that $AM(\G)\ge2$.

This is enough to disprove the outer regularity. However, we can proceed further
to get that in fact $AM(\G)=\infty$.
Indeed, by induction we find $\delta_j$, $j=1,2,\dots$, such that $[\delta_{j+1},\delta_j]\in\G$
for each $j\in\en$.

\end{proof}

\begin{rmrk} 
We can construct a similar example on paths in $\er^2$ equipped with the 
Lebesgue measure. For $x,a,b\in\er$ with $a<b$ let $\gamma_{x,a,b}$ the be path defined
by
$$t\mapsto (x,t),\quad t\in[a,b],$$
which we identify with the respective length measure. It easily follows from the description of
the topology $\tau_b$ in Lemma~\ref{L:taub}(i) that the assignment $(x,a,b)\mapsto \gamma_{x,a,b}$ is continuous 
as a mapping to $(\M^+(\er^2),\tau_b)$.
Let 
$$\Gamma=\{\gamma_{x,0,\ell}\colon x\in(0,1),\ell\in(0,1]\}.$$
 Again $AM(\Gamma)\le1$ as witnessed by the admissible sequence $(\rho_k)$ where
$\rho_k=k\chi_{(0,1)\times(0,\frac1k)}$.
 Let $\G\supset \Gamma$ be $\tau_b$-open.
	
For each $m\in\en$ let
$$E_m=\bigcap_{j\ge m}\{x\in (0,1)\colon \gamma_{x,\frac1j,1}\in \G\}.$$
Then each $E_m$ is a $\G_\delta$-subset of $(0,1)$. Moreover, since for each $x\in (0,1)$ we have
$$
\gamma_{x,0,1}=\tau_b\text{-}\!\!\lim_{n\to\infty}\gamma_{x,\frac1n,1}\,,
$$
we deduce that $\bigcup_m E_m=(0,1)$. 
	
Hence there is $m\in\en$ with $|E_m|>\frac34$. Then for each admissible sequence $(g_k)_k$ for $\G$ we have
$$\begin{aligned}
\liminf_{k}\int_{E_m\times [0,1]}&g_k(x,y) \, dx\, dy &=  
\liminf_{k}\int_{E_m} \Bigl(\int_0^1 g_k(x,y) \, dy\Bigr)\, dx 
\\&
\ge \int_{E_m} \left(\liminf_{k} \int_0^1 g_k(x,y) \, dy\right)\, dx 
\\& \ge \int_{E_m} \left(\liminf_{k} \int_0^{1/m} g_k(x,y) \, dy+\liminf_{k} \int_{1/m}^{1} g_k(x,y) \, dy\right)\, dx \\&\ge 
2|E_m|
\ge\frac32.
\end{aligned}
$$
Hence $AM(\G)\ge\frac32>1$.
\end{rmrk}

\begin{rmrk}\label{r:wupper} The moduli $AM_p$, $M_p$ and $M_{p,c}^A$
nevertheless satisfy \eqref{wupper} and thus 
\eqref{downward} of Definition \ref{d:capacity} for each $p\ge 1$ (see Proposition~\ref{L:wupper} below).
\end{rmrk}

By Theorem~\ref{t:nonouter}, the modulus $AM$ is not outer regular. However, a weaker version of outer regularity holds by the following lemma.

\begin{lemma}\label{l:efsigma} 
Let $\E\subset\M^+(X)$  be arbitrary.
Then 
$$AM(\E)=\inf\{AM(\A)\colon \A\supset \E\text{ is }\taux\mbox{-}\F_{\sigma}\}$$
\end{lemma}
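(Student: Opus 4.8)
The plan is to bound $AM(\E)$ from above by the infimum over $\taux$-$\F_\sigma$ supersets, since the reverse inequality is immediate from monotonicity of $AM$ (any superset has larger modulus). So fix $\E\subset\M^+(X)$; we may assume $AM(\E)<\infty$, as otherwise there is nothing to prove (take $\A=\M^+(X)$, which is $\taux$-$\F_\sigma$ by $\sigma$-compactness considerations, or simply note the infimum is trivially $\le\infty$). Fix $\ep>0$.

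**First I would** invoke Theorem~\ref{t:AM=AM} to pick an admissible sequence $(\rho_j)_j\subset\Cx(X)^+$ for $\E$ with $\liminf_j\|\rho_j\|_1\le AM(\E)+\ep$; passing to a subsequence we may assume $\|\rho_j\|_1\le AM(\E)+\ep$ for all $j$. Now mimic the construction in the proof of Theorem~\ref{Tinside}: fix $\delta\in(0,1)$ and set
$$
\A=\bigcup_{i=1}^\infty\A_i,\qquad \A_i=\bigcap_{j\ge i}\Bigl\{\mu\in\M^+(X)\colon \int_X\rho_j\,d\mu\ge 1-\delta\Bigr\}.
$$
The key point is that each set $\{\mu\colon \langle\mu,\rho_j\rangle\ge 1-\delta\}$ is $\taux$-closed, because $\rho_j\in\Cx(X)$ makes $\mu\mapsto\langle\mu,\rho_j\rangle$ continuous on $(\M^+(X),\taux)$; hence $\A_i$, being a countable intersection of $\taux$-closed sets, is $\taux$-closed, and therefore $\A$ is $\taux$-$\F_\sigma$. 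Crucially, unlike in Theorem~\ref{Tinside}, here we do \emph{not} intersect with $\E$, so the $\A_i$ are genuinely closed and not merely relatively closed — this is exactly what buys us the $\F_\sigma$ property in the ambient space.

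**Next I would** check $\A\supset\E$: if $\mu\in\E$, then $\liminf_j\langle\mu,\rho_j\rangle\ge 1>1-\delta$, so $\langle\mu,\rho_j\rangle\ge 1-\delta$ for all large $j$, i.e. $\mu\in\A_i$ for some $i$. Finally, for the modulus bound, observe that $\rho_i/(1-\delta)$ is (for each $i$) an admissible function for $\A_i$ by construction, hence the single function... actually we use the sequence: since $\A\supset\A_i$ for the purpose of admissibility we argue via $AM$. Simplest: the sequence $(\rho_j/(1-\delta))_j$ is admissible for $\A$, because any $\mu\in\A$ lies in some $\A_i$, whence $\langle\mu,\rho_j\rangle\ge 1-\delta$ for all $j\ge i$, giving $\liminf_j\langle\mu,\rho_j/(1-\delta)\rangle\ge 1$. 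Therefore
$$
AM(\A)\le\liminf_j\Bigl\|\frac{\rho_j}{1-\delta}\Bigr\|_1\le\frac{AM(\E)+\ep}{1-\delta}.
$$
Letting $\delta\to 0$ and then $\ep\to 0$ yields $\inf\{AM(\A)\colon \A\supset\E\ \taux\mbox{-}\F_\sigma\}\le AM(\E)$, completing the proof.

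**The main obstacle** is essentially bookkeeping rather than a genuine difficulty: one must be careful that the sets $\A_i$ are closed in the \emph{whole} space $\M^+(X)$ (which requires $\rho_j\in\Cx(X)$, hence the appeal to Theorem~\ref{t:AM=AM}) and not merely relatively closed in $\E$, since otherwise $\A$ would not be $\F_\sigma$ in $\M^+(X)$. Everything else — admissibility, the $\ep$-$\delta$ juggling — is a routine repetition of the argument already carried out in Theorem~\ref{Tinside}, with the only modification being the removal of the intersection with $\E$ in the definition of the $\A_i$.
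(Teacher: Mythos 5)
Your proof is correct and follows essentially the same route as the paper: both take an admissible sequence from $\Cx(X)^+$ (via Theorem~\ref{t:AM=AM}), form the $\taux$-closed sets $\{\mu\colon\langle\mu,\rho_j\rangle\ge t\text{ for }j\ge k\}$ whose union is the desired $\F_\sigma$ superset, and observe that the rescaled sequence is admissible for it. The only differences are cosmetic (the paper uses a single parameter $t\to1$ in place of your $\ep$ and $\delta$, and your passage to a subsequence is unnecessary since the final bound only uses the $\liminf$ of the norms).
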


\begin{proof}
The inequality $\le$ is obvious. Let us prove the converse one.
Choose $0<t<1$.
Let $(\rho_k)$ be an admissible sequence for $\E$ made of functions from $\Cx(X)^+$ such that 
$$
\liminf_k\|\rho_k\|_1<AM(\E)+1-t.
$$
(We may use test functions from $\Cx(X)^+$ by Theorem \ref{t:AM=AM}.)
Set 
$$
\F_k=\{\mu\in  \M^+(X)  \colon
\langle \mu,\rho_j\rangle
\ge t\mbox{ for }j\ge k\}.
$$
Then $\F_k$ are $\taux$-closed and
$$
\E\subset \A:=\bigcup_k\F_k.
$$
Further, the sequence $(\rho_k/t)_k$ is admissible for $\A$.
Therefore
$$
AM(\A)\le \frac1t (AM(\E)+1-t) 
$$
and letting $t \rightarrow 1$ we obtain
$$
\inf\{AM(\A)\colon \A\supset \E\text{ is }\taux\mbox{-}\F_{\sigma}\}\le AM(\E),
$$
which completes the proof.
\end{proof}

\section{Plans and a content on $\M^+(X)$}\label{sec:plans}

In this section $X$ continues to be a Polish space equipped with a positive Radon measure $\meas$.
$\Cx(X)$ is again a fixed acceptable subspace of $C_b(X)$.

The space $\M^+(\M^+(X))$ is defined as the family of all nonnegative finite
Borel measures on $\M^+(X)$. Here we do not need to distinguish between $\tau_b$-Borel subset and $\taux$ Borel subsets of $\M^+(X)$ as they are the same, see Proposition~\ref{P:tauA}(h). Moreover, all measures from $\M^+(\M^+(X))$ are Radon on $(\M^+(X),\taux)$. Indeed, for $\tau_b$ this follows from Proposition~\ref{P:tauA}(a) and Proposition~\ref{p:outer}. Since $\taux$ is a weaker topology, it has more compact sets, so the Radon property remains true for $\taux$.

\begin{definition}[\cite{AGS,ADS}] 
A finite positive Borel measure
$\bnu$ on $\M^+(X)$ is called a \textit{plan}.
We denote
$$
\bnu^{\#}\colon E\mapsto \int_{\M^+(X)}\mu(E)\,d\bnu(\mu),\quad E\subset X\text{ Borel}.
$$
If $\bnu$ is a probability measure, $\bnu^\#$ has the interpretation as \textit{barycenter},
cf.\ \cite{ADS}.

If $\bnu^{\#}$ is absolutely continuous with respect to $\meas$, we identify it 
with its density. Under this convention, we can associate the $L^q(\meas)$ norm
$\|\bnu^\#\|_q$ with the measure $\bnu^\#$.

Now, if $p\in [1,\infty)$, 
$q$ is the dual exponent to $p$,
$\E$ is a subfamily of $\M^+(X)$,
we define the \textit{$p$-content} of $\E$ as 
\eqn{cpl}
$$
\C_p(\E)=\sup\Bigl\{\bnu^*(\E)\colon 
\bnu\in\M^+(\M^+(X)),\ \bnu^\#\ll\meas,\ 
\|\bnu^{\#}\|_q\le 1
\Big\}.
$$
Recall that $\bnu^*$ is the outer measure induced by
$\bnu$, see Proposition \ref{p:outer}, and 
the sign $\ll$ denotes the relation of absolute continuity.
We simplify $\C_1$ to $\C$ for $p=1$.
\end{definition}

\begin{rmrk}
In \cite{ADS}, this content is defined for universally measurable sets $\E$.
Our novelty is that we use the outer measure, which allows to define the content 
for all sets. Surprisingly (and in contrast with properties of $AM$ and $M_1$), this set
function is a Choquet capacity, see Theorem \ref{t:choquetness}. The following theorem
is a key step to this observation. 
\end{rmrk}

\begin{thm}\label{t:incr} Let $\E_j\subset\M^+(X)$, 
$\E_1\subset \E_2\subset\dots$, and $\E=\bigcup_j\E_j$.
Then
$$
 \C_p(\E)= \lim_j\C_p(\E_j).
$$
\end{thm}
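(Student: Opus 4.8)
The statement to prove is Theorem~\ref{t:incr}: for an increasing sequence $\E_1 \subset \E_2 \subset \cdots$ with union $\E$, one has $\C_p(\E) = \lim_j \C_p(\E_j)$.

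My plan is as follows. The inequality $\lim_j \C_p(\E_j) \le \C_p(\E)$ is immediate from monotonicity of $\C_p$ (which follows directly from monotonicity of the outer measure $\bnu^*$ in the definition \eqref{cpl}), so the whole content is the reverse inequality $\C_p(\E) \le \lim_j \C_p(\E_j)$. Fix $t < \C_p(\E)$; I must produce $j$ with $\C_p(\E_j) > t$, or at least with $\lim_j \C_p(\E_j) \ge t$. By the definition of $\C_p$ as a supremum, choose a plan $\bnu \in \M^+(\M^+(X))$ with $\bnu^\# \ll \meas$, $\|\bnu^\#\|_q \le 1$, and $\bnu^*(\E) > t$. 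The key point is that $\bnu^*$ is the outer measure induced by the \emph{finite} Borel (indeed Radon, by Proposition~\ref{p:outer}) measure $\bnu$ on the Polish space $(\M^+(X),\tau_b)$. For such an outer measure, continuity from below along arbitrary increasing sequences of sets holds: this is exactly the statement that $\mu^*$ in Proposition~\ref{p:outer} is a Choquet capacity, which in particular gives property~\eqref{upward} of Definition~\ref{d:capacity}. Hence $\bnu^*(\E) = \bnu^*\bigl(\bigcup_j \E_j\bigr) = \lim_j \bnu^*(\E_j)$.

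Putting these together: since $\bnu^*(\E) > t$ and $\bnu^*(\E_j) \nearrow \bnu^*(\E)$, there is some $j_0$ with $\bnu^*(\E_{j_0}) > t$. But $\bnu$ is an admissible plan for $\E_{j_0}$ in \eqref{cpl} (the constraints $\bnu^\# \ll \meas$ and $\|\bnu^\#\|_q \le 1$ do not depend on the family), so $\C_p(\E_{j_0}) \ge \bnu^*(\E_{j_0}) > t$. Therefore $\lim_j \C_p(\E_j) \ge \C_p(\E_{j_0}) > t$. Letting $t \to \C_p(\E)^-$ yields $\lim_j \C_p(\E_j) \ge \C_p(\E)$, and combined with the trivial inequality we get equality.

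The only subtlety — and the one step I would be careful to state correctly rather than a genuine obstacle — is the justification that $\bnu^*$ is continuous along increasing sequences of \emph{arbitrary} (not necessarily measurable) subsets of $\M^+(X)$. This is precisely why the remark preceding the theorem emphasizes that the content is defined via the outer measure: the outer measure of a finite Borel measure on a metric space always has this upward continuity (it is part of being a Choquet capacity, cf.\ Proposition~\ref{p:outer} and Example 30.B.1 in \cite{kechris}), whereas the set functions $AM$ and $M_1$ fail it. So the proof is essentially a one-line reduction of Theorem~\ref{t:incr} to Proposition~\ref{p:outer}, with the observation that the admissibility constraints on plans are independent of the target family. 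I do not anticipate any computational difficulty; the content of the theorem lies entirely in recognizing that $\C_p$ inherits upward continuity from the outer measures $\bnu^*$.
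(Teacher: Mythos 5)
Your proof is correct and is essentially identical to the paper's: both reduce the nontrivial inequality to the upward continuity of the outer measure $\bnu^*$ of a single near-optimal plan, invoking Proposition~\ref{p:outer} (that $\bnu^*$ is a Choquet capacity) to find $j_0$ with $\bnu^*(\E_{j_0})>t$ and then observing that the same plan witnesses $\C_p(\E_{j_0})>t$. Your added remark that the admissibility constraints on the plan are independent of the target family, and that continuity holds along arbitrary (non-measurable) increasing sequences, correctly identifies the point the paper leaves implicit.
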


\begin{proof}  The inequality $\ge$ is obvious. To prove the converse one
we may assume that $\C_p(\E)>0$.
Choose $0\le t<\C_p(\E)$. Then we can find 
a measure $\bpi\in \M^+(\M^+(X))$  such that  $\bpi^\#\ll\meas$,
$\|\bpi^\#\|_q\le 1$ and $\bpi^*(\E)>t$.
Since $\bpi^*$ is a Choquet capacity (Proposition \ref{p:outer}),
there is $k\in\en$ such that $\bpi^*(\E_k)>t$.
Hence $\C_p(\E_j)>t$ for all $j\ge k$.
\end{proof}

\section{Modulus and content}\label{sec:m-c}

The following fundamental theorem 
is due to Ambrosio, Di Marino and Savar\'e.

\begin{thm}[\cite{ADS}]\label{t:their} 
Let $p\in (1,\infty)$ and
$\E\subset \M^+(X)$ be a $\tau_b$-Suslin set. Then
$$
\C_p(\E)=M_p(\E)^{1/p}.
$$
\end{thm}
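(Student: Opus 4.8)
The plan is to prove the two inequalities $\C_p(\E)\le M_p(\E)^{1/p}$ and $\C_p(\E)\ge M_p(\E)^{1/p}$ separately. For the easy inequality $\C_p(\E)\le M_p(\E)^{1/p}$, one does not even need the Suslin hypothesis. Take any admissible $\rho$ for $\E$ and any plan $\bnu$ with $\bnu^\#\ll\meas$ and $\|\bnu^\#\|_q\le 1$. For every $\mu\in\E$ we have $\int_X\rho\,d\mu\ge 1$, so integrating against $\bnu$ and using Fubini (everything is nonnegative) gives
$$
\bnu(\E)\le\int_{\M^+(X)}\Bigl(\int_X\rho\,d\mu\Bigr)\,d\bnu(\mu)=\int_X\rho\,\bnu^\#\,d\meas\le\|\rho\|_p\,\|\bnu^\#\|_q\le\|\rho\|_p.
$$
The same bound holds for $\bnu^*(\E)$ by considering a Borel set $\A\supset\E$ with small excess and the fact that $\bnu^*(\E)=\inf\{\bnu(\A)\colon\A\supset\E\text{ Borel}\}$; since $\rho$ is admissible for $\E$ but perhaps not for $\A$, one instead notes $\bnu^*(\E)\le\bnu(\A)$ is not directly bounded, so it is cleaner to observe that $\mu\mapsto\int_X\rho\,d\mu$ is Borel (indeed lsc) on $\M^+(X)$ and $\ge 1$ on $\E$, hence $\ge 1$ $\bnu^*$-a.e.\ on $\E$; integrating yields $\bnu^*(\E)\le\|\rho\|_p$. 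Taking the supremum over $\bnu$ and the infimum over $\rho$ gives $\C_p(\E)\le M_p(\E)^{1/p}$.

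For the reverse inequality, the idea is a minimax / duality argument in the spirit of \cite{ADS}. Fix $\lambda<M_p(\E)^{1/p}$; we must produce a plan $\bnu$ with $\bnu^\#\ll\meas$, $\|\bnu^\#\|_q\le 1$ and $\bnu^*(\E)>\lambda^{?}$ — more precisely one wants to reach $\bnu^*(\E)\ge\lambda$ after normalizing. First reduce to a $\taux$-compact family: by Theorem \ref{t:eq} we have $M_p=M_{p,c}^A$ on compact families, and by the Choquet capacitability theorem (applicable since $M_p$ is a Choquet capacity by Theorem \ref{t:choquet} and $\E$ is Suslin) $\E$ is $M_p$-capacitable, so it suffices to treat $\taux$-compact $\K\subset\E$ with $M_p(\K)$ close to $M_p(\E)$; for such $\K$ one also knows $\C_p(\K)\le\C_p(\E)$ by monotonicity. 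On a compact family $\K$ the set of admissible $\rho\in\Cx(X)^+$ can be analyzed via convex duality: the quantity $M_p(\K)^{1/p}=\inf\{\|\rho\|_p\colon\rho\ge 0,\ \int_X\rho\,d\mu\ge 1\ \forall\mu\in\K\}$ is, by the Hahn–Banach / minimax theorem applied to the bilinear pairing $(\rho,\bnu)\mapsto\int_X\rho\,\bnu^\#\,d\meas$ over $\rho$ in the admissible convex set and $\bnu$ ranging over probability measures on $\K$, equal to $\sup_{\bnu}\inf_\rho$, and the inner infimum is $\|\bnu^\#\|_q^{-1}\cdot(\text{something})$ — after the correct bookkeeping this yields a probability plan $\bnu$ supported on $\K$ with $\|\bnu^\#\|_q\le M_p(\K)^{-1/p}$, so that the rescaled plan $\widetilde\bnu=M_p(\K)^{-1/p}\bnu$ has $\|\widetilde\bnu^\#\|_q\le 1$ and $\widetilde\bnu^*(\E)\ge\widetilde\bnu(\K)=M_p(\K)^{-1/p}$. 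This does not immediately give the clean value $M_p(\E)^{1/p}$, so one must iterate/optimize: apply the construction not to $\K$ but to a sequence of compact subfamilies and combine the resulting plans, or run the duality directly at the level of $\C_p(\E)=\sup\bnu^*(\E)$ using that $\C_p$ is continuous along increasing unions (Theorem \ref{t:incr}) to pass from compact $\K$ back to $\E$.

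The main obstacle is precisely this duality step on compact families and the passage back to $\E$: one must verify the compactness/continuity hypotheses needed to invoke a minimax theorem (the pairing $(\rho,\bnu)\mapsto\int_X\rho\,\bnu^\#\,d\meas$ must be continuous in $\bnu$ for the relevant topology on plans and the admissible set of $\rho$ must be handled despite $\rho$ ranging over an unbounded convex set — one truncates $\rho$ at level $N$ and lets $N\to\infty$), and one must control that $\bnu^\#\ll\meas$ is preserved in the limit. For $p>1$ the reflexivity of $L^p(\meas)$ is what makes the densities $\bnu^\#$ behave well under weak limits (bounded in $L^q$ implies weakly convergent subsequences), which is exactly why the argument works for $p>1$ and is delicate at $p=1$; here, since $p>1$, we may freely use weak compactness of $\{f\in L^q(\meas)\colon\|f\|_q\le 1\}$. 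Once the compact case is done with the sharp constant $\C_p(\K)=M_p(\K)^{1/p}$, Theorem \ref{t:incr} together with $M_p$-capacitability of the Suslin set $\E$ upgrades it to $\C_p(\E)=M_p(\E)^{1/p}$: write $\E$ as an increasing union of (or approximate from inside by) compact families, apply the compact-case equality to each, and take limits on both sides.
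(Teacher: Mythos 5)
The paper does not prove this statement itself --- Theorem \ref{t:their} is quoted from \cite{ADS} --- so the relevant comparison is with the proof there and with the paper's own $p=1$ analogue, Theorem \ref{t:compok}, whose architecture your proposal essentially reproduces: the inequality $\C_p(\E)\le M_p(\E)^{1/p}$ by Fubini and H\"older for arbitrary $\E$ (your observation that $\mu\mapsto\int_X\rho\,d\mu$ is lsc, hence the admissibility constraint defines a Borel superset of $\E$, correctly handles $\bnu^*$ versus $\bnu$), and the reverse inequality by a Hahn--Banach separation on $\taux$-compact families followed by Choquet capacitability of the Suslin set $\E$. This is the right route, and it is the one taken in \cite{ADS}.

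There are, however, two concrete defects in the execution. First, your rescaling is inverted: the duality on a compact $\K$ produces a \emph{probability} plan $\bnu$ concentrated on $\K$ with $\|\bnu^\#\|_q\le M_p(\K)^{-1/p}$, and the plan to feed into the definition of $\C_p$ is $\widetilde\bnu=M_p(\K)^{1/p}\bnu$, not $M_p(\K)^{-1/p}\bnu$; then $\|\widetilde\bnu^\#\|_q\le1$ and $\widetilde\bnu(\K)=M_p(\K)^{1/p}$, so the compact case closes with the sharp constant and the ``iteration/optimization'' you call for is unnecessary. As written, your $\widetilde\bnu$ gives the lower bound $M_p(\K)^{-1/p}$, which is not the desired quantity and whose norm constraint can even fail when $M_p(\K)<1$. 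Second, the passage from compact subfamilies back to $\E$ should not go through Theorem \ref{t:incr}: a Suslin set is not in general an increasing union of compacts. All that is needed is monotonicity of $\C_p$ together with $M_p$-capacitability, namely $\C_p(\E)\ge\sup_{\K}\C_p(\K)=\sup_{\K}M_p(\K)^{1/p}=M_p(\E)^{1/p}$ with $\K$ ranging over compact subsets of $\E$. Finally, the ``main obstacle'' you flag (truncating the unbounded admissible set, weak compactness of the $L^q$ ball) dissolves if you run the separation exactly as in the proof of Theorem \ref{t:compok}: separate $U=\{J\rho\colon\rho\in \Cx(X),\ \|\rho\|_p\le\xi\}$, with $\xi=M_{p,c}^A(\K)^{1/p}$, from the open convex set $V=\{u\in C(\K)\colon u>1\}$; the separating functional is shown to be a probability measure $\bnu$ with $|\langle\bnu^\#,g\rangle|\le\xi^{-1}\|g\|_p$ for all $g\in\Cx(X)$, whence $\bnu^\#\ll\meas$ and $\|\bnu^\#\|_q\le\xi^{-1}$ by density of $\Cx(X)\cap L^p(\meas)$ in $L^p(\meas)$. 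No minimax theorem, truncation, or extraction of weak limits is required.
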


Since we are deeply interested in $M_1$ and $AM$-moduli corresponding to $p=1$,
we were motivated to look what happens for $p=1$. 

\begin{prop}\label{p:cplem}
Let $\E \subset \M^+(X)$ be a Borel set.
Then
$$
\C (\E)\le AM(\E). 
$$
\end{prop}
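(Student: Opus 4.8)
\textbf{Proof plan for Proposition~\ref{p:cplem}.}

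The plan is to show that for any plan $\bnu$ with $\bnu^\#\ll\meas$ and $\|\bnu^\#\|_\infty\le 1$, and any admissible sequence $(\rho_j)$ for $\E$, we have $\bnu^*(\E)\le\liminf_j\|\rho_j\|_1$; taking suprema over plans and infima over admissible sequences then gives $\C(\E)\le AM(\E)$. Since $\E$ is Borel, $\bnu^*(\E)=\bnu(\E)$, so the target inequality is $\bnu(\E)\le\liminf_j\|\rho_j\|_1$.

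First I would reduce to the case $AM(\E)<\infty$ and fix an admissible sequence $(\rho_j)$ of lsc functions with $\liminf_j\|\rho_j\|_1<\infty$; passing to a subsequence we may assume the $\liminf$ is a genuine limit. The key computation is an application of Fubini's theorem (valid since everything is nonnegative and Borel): for each $j$,
$$
\int_{\M^+(X)}\Bigl(\int_X\rho_j\,d\mu\Bigr)d\bnu(\mu)
=\int_X\rho_j\,d\bnu^\#
\le\|\rho_j\|_1\,\|\bnu^\#\|_\infty\le\|\rho_j\|_1,
$$
using the definition of $\bnu^\#$ and $\|\bnu^\#\|_\infty\le 1$. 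Now set $f_j(\mu)=\int_X\rho_j\,d\mu$; these are nonnegative Borel functions on $\M^+(X)$ (measurability of $\mu\mapsto\int\rho_j\,d\mu$ for lsc $\rho_j$ follows from $\taux$-lower semicontinuity, e.g.\ via Lemma~\ref{L:aprox}(ii), or directly for $\rho_j\in\Cx(X)$). By admissibility, $\liminf_j f_j(\mu)\ge 1$ for every $\mu\in\E$. Applying Fatou's lemma,
$$
\bnu(\E)\le\int_{\E}\liminf_j f_j\,d\bnu
\le\liminf_j\int_{\E}f_j\,d\bnu
\le\liminf_j\int_{\M^+(X)}f_j\,d\bnu
\le\liminf_j\|\rho_j\|_1.
$$

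Taking the infimum over all admissible sequences $(\rho_j)$ for $\E$ yields $\bnu(\E)\le AM(\E)$, and then the supremum over all admissible plans $\bnu$ gives $\C(\E)\le AM(\E)$. The argument is essentially routine; the only point that needs a little care is the measurability of $\mu\mapsto\int_X\rho_j\,d\mu$ and the legitimacy of Fubini, both of which are handled by approximating the lsc function $\rho_j$ from below by functions in $\Cx(X)^+$ (Lemma~\ref{L:aprox}(ii)) and invoking monotone convergence, so no essential obstacle arises. (Note that invoking Theorem~\ref{t:AM=AM} one could even assume $\rho_j\in\Cx(X)$ from the outset, making the measurability transparent.)
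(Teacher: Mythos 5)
Your proposal is correct and follows essentially the same route as the paper's proof: Fatou's lemma applied to $\mu\mapsto\int_X\rho_j\,d\mu$ on $\E$, followed by the identification $\int_{\M^+(X)}\bigl(\int_X\rho_j\,d\mu\bigr)\,d\bnu(\mu)=\int_X\rho_j\,d\bnu^{\#}\le\|\bnu^{\#}\|_\infty\|\rho_j\|_1$ and the passage to suprema and infima. The only difference is that you spell out the measurability of $\mu\mapsto\int_X\rho_j\,d\mu$ and the use of Fubini, which the paper leaves implicit.
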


\begin{proof} 
Let $\bnu$ be a plan with  $\bnu^\#\ll\meas$,  $\|\bnu^\#\|_{\infty}\le1$ and $(\rho_j)_j$ be a 
sequence admissible for $\E$.
Then by the Fatou lemma,
$$
\aligned
\bnu(\E)&\le \int_{\E}\Bigl(\liminf_j\int_{X}\rho_j\,d\mu\Bigr)\,d\bnu(\mu)\le
\liminf_j\int_{\E}\Bigl(\int_{X}\rho_j\,d\mu\Bigr)\,d\bnu(\mu)
\\&=\liminf_j\int_{X}\rho_j\,d\bnu^{\#}\le \|\bnu^{\#}\|_{\infty}
\liminf_j\int_{X}\rho_j\,d\meas  \le \liminf_j\int_{X}\rho_j\,d\meas. 
\endaligned
$$
Passing to the supremum on the left and to the infimum on the right we obtain the 
desired inequality $\C (\E)\le AM(\E)$.
\end{proof}

\begin{thm}\label{t:compok}
Let 
$\K\subset \M^+(X)$ be a $\taux$-compact family of measures.
Then
$$
\C (\K)= AM(\K)=M_1(\K)= M_{1,c}^A(\K).
$$
\end{thm}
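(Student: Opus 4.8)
The plan is to establish the chain of equalities by combining three ingredients already available in the text. First, Theorem~\ref{t:eq} gives $M_p(\K)=M_{p,c}^A(\K)$ for $\taux$-compact $\K$ and all $p\ge1$, so in particular $M_1(\K)=M_{1,c}^A(\K)$. Second, since $M_{1,c}^A$ uses admissible functions (constant sequences), it is in particular an admissible sequence in the sense of the $AM$-definition, whence $AM(\K)\le M_{1,c}^A(\K)$; combined with the always-valid inequality $AM\le M_1$ and $M_1\le M_{1,c}^A$, it already follows that $AM(\K)=M_1(\K)=M_{1,c}^A(\K)$. So the only genuine work is to insert $\C(\K)$ into this chain, i.e.\ to prove $\C(\K)=AM(\K)$.

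For the content side, one inequality is essentially Proposition~\ref{p:cplem}: a $\taux$-compact set is closed, hence Borel, so $\C(\K)\le AM(\K)$. Thus everything reduces to the reverse inequality $AM(\K)\le\C(\K)$. Here I would run a minimax/duality argument on the compact set $\K$. Fix $\ep>0$. The claim is that there exists a plan $\bnu$ with $\bnu^\#\ll\meas$, $\|\bnu^\#\|_\infty\le1$, and $\bnu(\K)\ge M_1(\K)-\ep$ (using that $AM(\K)=M_1(\K)$ already established). The natural route: view $M_1(\K)$ via its definition as an infimum over lsc admissible $\rho$ of $\int_X\rho\,d\meas$; by Theorem~\ref{t:eq} we may even take $\rho\in\Cx(X)^+$. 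Then one wants a Hahn--Banach / Sion minimax separation between the convex set of (continuous, or lsc) admissible densities and the convex set of ``subprobability-barycenter'' plans. Concretely, consider the function $(\rho,\bnu)\mapsto \int_X\rho\,d\meas - \bnu(\K)$ (or a suitably regularized version), convex in $\rho$ and affine in $\bnu$, and apply a minimax theorem; the key compactness comes from $\K$ being $\taux$-compact (hence metrizable by Proposition~\ref{P:tauA}(f)) so that $\M^+(\K)$, restricted to plans with $\|\bnu^\#\|_\infty\le1$, is weak\* compact, while the constraint ``$\int_X\rho\,d\mu\ge1$ for all $\mu\in\K$'' is a closed condition in a topology in which $\mu\mapsto\int\rho\,d\mu$ is continuous for $\rho\in\Cx(X)$. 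An alternative, possibly cleaner, route that avoids a hands-on minimax: since $\C_p$ is, by the general theory (Proposition~\ref{p:outer}), the outer measure associated to a supremum of measures, and $\K$ is compact and metrizable, one can try to mimic the proof of Theorem~\ref{t:their} of \cite{ADS} in the case $p=1$ restricted to the compact family, where the Souslin/capacitability subtleties disappear: on a compact metrizable family the equality $\C_p(\K)=M_p(\K)^{1/p}$ of \cite{ADS} should already cover $p>1$, and a limiting argument $p\downarrow1$ combined with $M_p\to$? is delicate, so I would rather do the direct minimax.

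The main obstacle I anticipate is precisely the construction of the near-optimal plan attaining $\bnu(\K)$ close to $M_1(\K)$ with the hard $L^\infty$-constraint $\|\bnu^\#\|_\infty\le1$ on the barycenter: unlike the $p>1$ case where $L^q$ is reflexive and weak compactness of competitor densities is automatic, for $p=1$ the dual object lives in $L^\infty$ and one must be careful that the minimax exchange is legitimate and that the resulting optimal ``dual'' element is genuinely a plan (a positive measure on $\K$) rather than merely a finitely additive functional. I expect this to be handled by working on the compact metrizable space $\K$, where $C(\K)^*=\M(\K)$, so that positivity and countable additivity of the extremal functional come for free, and by using that $AM_c^A$-test functions lie in the separable lattice $\Cx(X)$ so that the relevant function spaces are manageable. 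Once the plan is produced, $\C(\K)\ge\bnu(\K)\ge M_1(\K)-\ep=AM(\K)-\ep$, and letting $\ep\to0$ closes the loop, giving $\C(\K)=AM(\K)=M_1(\K)=M_{1,c}^A(\K)$.
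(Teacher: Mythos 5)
There is a genuine logical gap at the very start of your argument. From $AM(\K)\le M_1(\K)\le M_{1,c}^A(\K)$ together with $M_1(\K)=M_{1,c}^A(\K)$ (Theorem~\ref{t:eq}) you can only conclude $AM(\K)\le M_1(\K)=M_{1,c}^A(\K)$; the claim that ``it already follows that $AM(\K)=M_1(\K)=M_{1,c}^A(\K)$'' does not follow, because nothing in that list bounds $AM(\K)$ from \emph{below} by $M_1(\K)$. That missing inequality is exactly the hard content of the theorem: one would like to manufacture a single admissible function from an admissible sequence by a compactness/covering argument, but the sets on which $\int\rho_j\,d\mu$ is eventually large are not $\taux$-open, so no such covering argument is available. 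The paper obtains $AM(\K)=M_1(\K)$ only \emph{a posteriori}, by closing the full circle $\C(\K)\le AM(\K)\le M_1(\K)\le M_{1,c}^A(\K)\le \C(\K)$, and explicitly remarks that this is the ``miracle'' of the statement. Your presentation treats the easy part ($\C\le AM$, via Proposition~\ref{p:cplem}) and the hard part as if the latter were already dispatched.

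That said, the skeleton is salvageable, because your duality step targets $M_1(\K)$ (equivalently $M_{1,c}^A(\K)$) rather than $AM(\K)$: if you produce, for each $\ep>0$, a plan $\bnu$ with $\bnu^\#\ll\meas$, $\|\bnu^\#\|_\infty\le1$ and $\bnu(\K)\ge M_{1,c}^A(\K)-\ep$, then $M_{1,c}^A(\K)\le\C(\K)$ and the whole chain collapses, yielding $AM(\K)=M_1(\K)$ as a by-product. This is precisely what the paper does, via Hahn--Banach separation in $C(\K)$ of the convex set $U=\{J\rho\colon\rho\in\Cx(X),\ \|\rho\|_1\le\xi\}$ (where $Jg(\mu)=\langle\mu,g\rangle$ and $\xi=M_{1,c}^A(\K)$) from the open convex set $V=\{u\in C(\K)\colon u>1\}$; your minimax formulation is the same duality in different clothing, and your observation that $C(\K)^*=\M(\K)$ on the compact metrizable $\K$ is indeed the key to getting a genuine (countably additive, positive) measure. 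But the sketch leaves the substantive verifications undone: that the separating functional can be normalized (the separation constant is strictly positive, which uses $0\notin\K$ and an integrable strictly positive $h\in\Cx(X)$), that it is a probability measure, and --- a point your outline never addresses --- that $\bnu^\#\ll\meas$, which the paper extracts from the estimate $|\langle\bnu^\#,g\rangle|\le\xi^{-1}\|g\|_1$ for $g\in\Cx(X)$ combined with outer regularity of $\meas$ and Lemma~\ref{L:aprox}(ii). Without these steps the plan you postulate is not known to be an admissible competitor in the definition of $\C$.
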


\begin{proof}  This can be shown as in the first step of \cite[Theorem 5.1]{ADS}.
We simplify a bit the argument,  on the other hand, some
parts of the proof are longer in order to cover the axiomatic approach.

Since inequalities $AM(\K)\le M_1(\K)\le M_{1,c}^A(\K)$ are obvious and 
in view Proposition \ref{p:cplem}, it remains to verify that
$M_{1,c}^A(\K)\le \C(\K)$.

If $0\in\K$ then clearly $\C(\K)=\infty$, hence we 
may assume that $0\notin\K$. Also we may assume that $\xi:= M_{1,c}^A(\K)>0$, in particular,
$\K$ is nonempty. 

Consider the bounded linear operator $J\colon \Cx(X)\to C(\K)$ 
defined as 
$$Jg(\mu)=\langle\mu,g\rangle,\qquad \mu\in\K, g\in\Cx(X).$$ 
The dual
operator $J^*$ is $\bnu\mapsto \bnu^{\#}$. 
Define
$$
\aligned
U&=\{J\rho\colon\rho\in \Cx(X),\;\|\rho\|_1\le \xi\}
,\\
V&= \{u\in C(\K)\colon u>1 \}.
\endaligned
$$
Then $U,V$ are convex and $V$ is open. 
Further,
$U\cap V=\emptyset$ by the definition
of $M_{1,c}^A$: if 
$\rho\in C_0(X)$ and 
$J\rho\ge \lambda>1$, then $\rho/\lambda$ is admissible
and thus $\|\rho\|_1\ge\lambda M_{1,c}^A(\K)>\xi$.
By the Hahn-Banach theorem there exist $a\in\er$
and $\bnu\in\M(\K)$ such that 
$\bnu\le a$ on $U$ and $\bnu>a$ on $V$. 
We claim that $a>0$. Of course, $a\ge 0$ as $U$ contains $0$.

Further, there is an $\meas$-integrable strictly positive function $h\in\Cx(X)$.
(This is a stronger version of Lemma~\ref{L:aprox}(i).) Since $X$ is Polish and $\meas$ is locally finite, there is a strictly positive continuous $m$-integrable function $h_0$. The existence of $h$ may
be now proved by applying the proof of Lemma~\ref{L:aprox}(i) to function $h_0$ in place of $1$.

We have $Jh>0$ on $\K$ as $\K$ does not contain
the null measure. By compactness of $\K$, $Jh$ attains a strictly positive
minimum in $\K$, and thus a positive multiple of $Jh$ belongs to $V$.
Therefore $\langle\bnu,Jh\rangle>0$ and since another positive multiple of $Jh$ belongs
to $U$, we have $a>0$.
Now we can normalize to get $a=1$, so that we may assume that
$\bnu\le 1$ on $U$ and $\bnu>1$ on $V$. 
Let $u\in \Cx(\K)^+$. Then for each $\ep>0$ we have $2+u/\ep\in V$ and thus
$$
\langle\bnu,u+2\ep\rangle=\ep\left\langle\bnu,2 +\frac{u}{\ep}\right\rangle>\ep.
$$
Letting $\ep\to 0$ we obtain  $\langle\bnu,u\rangle\ge0$. By property (A4) and dominated convergence
theorem we deduce that $\langle\bnu,f\rangle\ge 0$ for each $f\in C_b(X)^+$, i.e., $\bnu\in \M^+(X)$.
We claim that $\bnu$ is a probability
measure. Indeed, given $\ep>0$, we have
$$
1+\ep\in V\implies  \langle\bnu,\,1+\ep\rangle >1.
$$
On the other hand, we find an admissible $g\in \Cx(X)$ such that
$\|g\|_1\le \xi(1+\ep)$. Then
$(1+\ep)^{-1}Jg\in U$ and $Jg\ge 1$ on $\K$. Thus
$$
\langle\bnu,\,1\rangle\le\langle\bnu,\,Jg\rangle\le 1+\ep 
$$
and we  have verified that $\bnu$ is a probability measure.  Next,
the inequality $\bnu\le 1$ on $U$ implies that for any $g\in \Cx(X)$ with $\|g\|_1\le 1$
we have
$$
 \langle \bnu^\#,g\rangle=\langle J^*\bnu,g\rangle=\langle \bnu,Jg\rangle
=\frac1\xi=\langle \bnu,J(\xi g)\rangle\le \frac1\xi.
$$
It follows that
\eqn{estimate}
$$|\langle \bnu^\#,g\rangle|\le\frac1\xi\cdot \|g\|_1,\qquad g\in \Cx(X).$$
From this inequality we first deduce that $\bnu^\#\ll \meas$. Let $N\subset X$ be an $\meas$-null set. 
Then, given $\ep>0$ there is an open set $G\supset X$ with $\meas(G)<\ep$. By Lemma~\ref{L:aprox}(ii) there is a sequence $(g_n)$ in $\Cx(X)^+$ with $g_n\nearrow \chi_G$. Then
$$\bnu^\#(G)=\lim_n \langle \bnu^\#,g_n\rangle \le \limsup_n \frac1\xi\cdot\|g_n\|_1\le \frac1\xi\cdot\meas(G)<\frac{\ep}{\xi}.$$
Thus $\bnu^\#(N)=0$.

Once we know that $\bnu^\#\ll\meas$, \eqref{estimate} shows that $\|\bnu^\#\|_\infty\le \frac1\xi$.
Thus,
$$
M_{1,c}^A(\K)=\xi=\xi\bnu(\M^+(X))\le \C(\K).
$$
\end{proof}

\begin{rmrk}
Note that the proof of $AM(\K)=M_1(\K)$ is a miracle.
One would expect a construction of a single admissible function 
$\rho$ from an admissible sequence $\rho_j$ using a suitable
covering of the compact set $\K$. However, the sets that would be useful
for such a proof  
are not open. So, instead of this we use a non-constructive proof using the Hahn-Banach theorem.
\end{rmrk}

\begin{prop}\label{p:AMcomp} 
Let  $(\K_k)_k$ be an increasing sequence of $\taux$-compact  
subsets of $M^+(X)$. 
Then
\eqn{incrforc}
$$
AM\Bigl(\bigcup_k\K_k\Bigr)=\lim_k AM(\K_k).
$$
\end{prop}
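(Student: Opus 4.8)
The plan is to deduce this from Theorem~\ref{t:compok} together with the soft upward-continuity statement already recorded in Corollary~\ref{C2}; no genuinely new argument is needed. The only point worth keeping in mind is that $\bigcup_k\K_k$ need \emph{not} be $\taux$-compact, so Theorem~\ref{t:compok} cannot be applied to it directly; what one does apply it to is each individual compact piece $\K_k$.

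First I would invoke Theorem~\ref{t:compok}: since each $\K_k$ is $\taux$-compact, we have $AM(\K_k)=M_1(\K_k)$ for every $k\in\en$. Next, the sequence $(\K_k)_k$ is an increasing sequence of subsets of $\M^+(X)$ whose union is $\bigcup_k\K_k$, and the hypothesis $M_1(\K_k)=AM(\K_k)$ required by Corollary~\ref{C2} holds for each $k$ by the previous sentence. Hence Corollary~\ref{C2}, applied with $\E_k=\K_k$, gives exactly $\lim_k AM(\K_k)=AM\bigl(\bigcup_k\K_k\bigr)$, which is \eqref{incrforc}. (Recall that internally Corollary~\ref{C2} is itself just Lemma~\ref{Lbasic} for the inequality $\le$ and monotonicity of $AM$ for the inequality $\ge$, so everything ultimately rests on choosing near-optimal admissible functions $\rho_k$ for the $\K_k$ and observing that $(\rho_k)_k$ is an admissible \emph{sequence} for $\bigcup_k\K_k$.)

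Since the statement is so short, there is no real obstacle: all the work has been done in establishing $AM=M_1$ on $\taux$-compact families (Theorem~\ref{t:compok}), whose proof is the delicate Hahn--Banach argument. If one prefers to route through the content instead, the same conclusion follows from Theorem~\ref{t:incr}: $\C\bigl(\bigcup_k\K_k\bigr)=\lim_k\C(\K_k)=\lim_k AM(\K_k)$ by Theorems~\ref{t:incr} and~\ref{t:compok}, while $\C\bigl(\bigcup_k\K_k\bigr)\le AM\bigl(\bigcup_k\K_k\bigr)$ by Proposition~\ref{p:cplem} (note $\bigcup_k\K_k$ is $\taux$-$\sigma$-compact, hence Borel) and $AM\bigl(\bigcup_k\K_k\bigr)\le\lim_k M_1(\K_k)=\lim_k AM(\K_k)$ by Lemma~\ref{Lbasic}; chaining these inequalities closes the loop. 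I would present the first route, as it is the cleanest.
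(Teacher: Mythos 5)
Your proposal is correct and is exactly the paper's argument: the paper's proof of Proposition~\ref{p:AMcomp} consists precisely of combining Theorem~\ref{t:compok} (which gives $AM(\K_k)=M_1(\K_k)$ for each $\taux$-compact $\K_k$) with Corollary~\ref{C2}. The alternative route through the content that you sketch is a valid variant, but the first route you present is the one the paper uses.
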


\begin{proof}
We obtain \eqref{incrforc}  from Corollary \ref{C2} and Theorem
\ref{t:compok}.
\end{proof}

\begin{prop}\label{L:wupper} The moduli $AM_p$, $M_p$ and $M_{p,c}^A$ satisfy condition \eqref{wupper} and thus \eqref{downward} of Definition \ref{d:capacity} for each $p\ge 1$.
\end{prop}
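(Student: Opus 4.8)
The plan is to establish property \eqref{wupper} for each of $M_p$, $M_{p,c}^A$ and $AM_p$ on $(\M^+(X),\taux)$; property \eqref{downward} then follows from \eqref{wupper} together with monotonicity, exactly as noted in the remark after Definition~\ref{d:capacity}. Each of the three moduli is monotone, since an admissible function (resp.\ admissible sequence) for a larger family restricts to one for any smaller family; so if $\Phi$ denotes any one of them and $\K\subset\M^+(X)$ is $\taux$-compact, then $\Phi(\G)\ge\Phi(\K)$ for every $\taux$-open $\G\supset\K$, and it remains to produce, for each $\ep>0$, a $\taux$-open $\G\supset\K$ with $\Phi(\G)$ close to $\Phi(\K)$; we may assume $\Phi(\K)<\infty$ (otherwise \eqref{wupper} is trivial). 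First I would treat $M_{p,c}^A$: choose $\rho\in\Cx(X)$ admissible for $\K$ with $\|\rho\|_{L^p(\meas)}^p<M_{p,c}^A(\K)+\ep$, fix $\delta>0$, set $\rho_\delta=(1+\delta)\rho\in\Cx(X)$ and $\G=\{\mu\in\M^+(X)\colon\langle\mu,\rho_\delta\rangle>1\}$. Since $\rho_\delta\in\Cx(X)$, the map $\mu\mapsto\langle\mu,\rho_\delta\rangle$ is $\taux$-continuous, so $\G$ is $\taux$-open; it contains $\K$ because $\langle\mu,\rho_\delta\rangle\ge1+\delta$ there, and $\rho_\delta$ is admissible for $\G$, whence $M_{p,c}^A(\G)\le(1+\delta)^p\bigl(M_{p,c}^A(\K)+\ep\bigr)$; letting $\delta,\ep\to0$ finishes this case. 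For $M_p$ the only change is that $\rho$ is merely lsc: I would use Lemma~\ref{L:aprox}(ii) to write $\rho$ as an increasing limit of functions from $\Cx(X)^+$, so that by the monotone convergence theorem $\mu\mapsto\langle\mu,\rho\rangle$ is a supremum of $\taux$-continuous functions and hence $\taux$-lower semicontinuous; then $\G=\{\mu\colon\langle\mu,\rho_\delta\rangle>1\}$ is again $\taux$-open and the rest is verbatim. (This use of acceptability of $\Cx(X)$ is essential; it is precisely what lets \eqref{wupper} survive for $M_p$ even though $M_1$ and $AM$ fail outer regularity, cf.\ Theorem~\ref{t:nonouter}.)

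The case of $AM_p$ is the main obstacle, because the inflation trick does not apply directly: $AM_p$-admissibility is a $\liminf$ condition, and the set $\{\mu\colon\liminf_j\langle\mu,\rho_j\rangle\ge1\}$ attached to an admissible sequence $(\rho_j)$ is only $\taux$-Borel, not $\taux$-open. Instead I would exploit that on $\taux$-compact families $AM_p$ collapses to $M_p$: for $p=1$ this is Theorem~\ref{t:compok}, and for $1<p<\infty$ it is contained in Remark~\ref{r:reflexive} (where $AM_p=M_p$ on all families). Thus $AM_p(\K)=M_p(\K)$, and combining $AM_p\le M_p$ with the instance of \eqref{wupper} for $M_p$ already obtained,
$$
\inf\{AM_p(\G)\colon\G\supset\K,\ \G\ \taux\text{-open}\}\le\inf\{M_p(\G)\colon\G\supset\K,\ \G\ \taux\text{-open}\}=M_p(\K)=AM_p(\K),
$$
which is the required inequality.

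Finally, to pass from \eqref{wupper} to \eqref{downward}: for $\taux$-compact $K_1\supset K_2\supset\cdots$ with $K=\bigcap_jK_j$ and any $\taux$-open $\G\supset K$, the compacta $K_j\setminus\G$ decrease to $\emptyset$, so $K_j\subset\G$ for some $j$ and therefore $\inf_j\Phi(K_j)\le\Phi(\G)$; taking the infimum over such $\G$ and applying \eqref{wupper} gives $\inf_j\Phi(K_j)\le\Phi(K)$, while the reverse inequality is monotonicity. This is carried out once, uniformly, for $\Phi\in\{AM_p,M_p,M_{p,c}^A\}$.
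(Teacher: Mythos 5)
Your proof is correct and follows essentially the same route as the paper: an inflated admissible function yields a $\taux$-open superset of $\K$ with controlled modulus, and the $AM_p$ case is reduced to $M_p$ via Theorem~\ref{t:compok} (for $p=1$) and Remark~\ref{r:reflexive} (for $p>1$); the paper merely packages the three moduli into a single chain of inequalities using Theorem~\ref{t:eq}, whereas you treat $M_p$ directly with a lower semicontinuous admissible function, which is equally valid. One small slip in your parenthetical aside: your own argument in fact shows that $M_1$ \emph{is} outer regular (Theorem~\ref{t:nonouter} concerns only $AM$), so the claim that $M_1$ fails outer regularity should be dropped; this does not affect the proof.
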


\begin{proof} 
Let $\K\subset\M^+(X)$
be a $\taux$-compact set, $\ep>0$ and let $\rho\in \Cx(X)$ be an admissible function 
for $\K$, then the set 
$$
\G=\bigl\{\mu\in \M^+(X)\colon \langle \mu,\rho\rangle >(1+\ep)^{-1}\bigr\}
$$
is $\taux$-open in $\M^+(X)$ and $(1+\ep)\rho$ is admissible for $M_{p,c}^A(\G)$.
Hence
$$
AM_p(\G)\le M_p(\G)\le M_{p,c}^A(\G)\le (1+\ep)M_{p,c}^A(\K)=(1+\ep)M_{p}(\K)=(1+\ep)AM_p(\K),
$$
where the inequalities are obvious, the first equality follows from Theorem~\ref{t:eq}
and the second one from Theorem \ref{t:compok} if $p=1$ and from Remark \ref{r:reflexive}
if $p>1$.
\end{proof}

In view of Theorem \ref{t:their}, the following result is a counterpart of Theorem \ref{t:choquet} 
for $p=1$.

\begin{thm}\label{t:choquetness} 
$\C$ is a Choquet capacity on $(\M^+(X),\tau_b)$.
In fact, $\C$ satisfies conditions (i)--(iii) from Definition~\ref{d:capacity} also on $(\M^+(X),\taux)$
if $\Cx$ is an acceptable subspace of $C_b(X)$.
\end{thm}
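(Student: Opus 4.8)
The plan is to verify the three defining properties of a Choquet capacity for $\C$ on the Polish space $(\M^+(X),\tau_b)$ — monotonicity \eqref{monot}, continuity along increasing families \eqref{upward}, and continuity along decreasing sequences of compacts \eqref{downward} — and then to note which of these survive on the coarser $\taux$. Recall that $(\M^+(X),\tau_b)$ is Polish by Proposition~\ref{P:tauA}(a), so the notion of Choquet capacity genuinely applies there.

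\emph{Monotonicity.} This is immediate from the definition \eqref{cpl}: if $\E\subset\F$ then for any plan $\bnu$ with $\bnu^\#\ll\meas$ and $\|\bnu^\#\|_q\le 1$ (here $q=\infty$ since $p=1$) we have $\bnu^*(\E)\le\bnu^*(\F)$ because $\bnu^*$, being an outer measure, is monotone; taking the supremum over admissible plans gives $\C(\E)\le\C(\F)$. Note this argument is topology-free, so \eqref{monot} holds on every $\taux$.

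\emph{Continuity along increasing families.} This is exactly Theorem~\ref{t:incr} (applied with $p=1$), whose proof rests on the fact that each $\bnu^*$ is itself a Choquet capacity (Proposition~\ref{p:outer}), so that $\bnu^*(\E_k)\nearrow\bnu^*(\E)$ for every admissible $\bnu$; taking suprema gives the claim. Here the crucial input is that $(\M^+(X),\tau_b)$ is Polish and every plan is a finite Borel (hence Radon) measure on it, so Proposition~\ref{p:outer} applies verbatim. I expect this to be the step one must be most careful about when passing to $\taux$: the argument still works because, as observed in Section~\ref{sec:plans}, every $\bnu\in\M^+(\M^+(X))$ is Radon on $(\M^+(X),\taux)$ as well (a coarser topology has more compact sets), so $\bnu^*$ remains a Choquet capacity and Theorem~\ref{t:incr} holds for $\taux$ too.

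\emph{Continuity along decreasing sequences of compacts.} For $\taux$ this is precisely the content of Proposition~\ref{L:wupper} together with the Remark after Definition~\ref{d:capacity}: since $\taux$ is metrizable on compact sets (Proposition~\ref{P:tauA}(f)) — or directly, since $\C$ satisfies \eqref{monot} and \eqref{wupper} by Proposition~\ref{L:wupper}, the Remark gives \eqref{downward}. Strictly one should check that Proposition~\ref{L:wupper} was proved for $\C=\C_1$; it covers $AM_p$, $M_p$, $M_{p,c}^A$, and in the chain of equalities in its proof the case $p=1$ uses Theorem~\ref{t:compok}, which identifies all four quantities ($\C$, $AM$, $M_1$, $M_{1,c}^A$) on $\taux$-compact sets — so \eqref{wupper}, hence \eqref{downward}, holds for $\C$ on $(\M^+(X),\taux)$, and a fortiori on $(\M^+(X),\tau_b)$. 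Combining the three verified properties over the Polish space $(\M^+(X),\tau_b)$ shows $\C$ is a Choquet capacity there, while \eqref{monot}--\eqref{downward} have been established on each $(\M^+(X),\taux)$ as claimed. The only mild obstacle is bookkeeping the Radonness of plans when the topology is weakened in property \eqref{upward}; everything else is a direct appeal to results already proved.
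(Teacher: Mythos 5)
Your proposal is correct and follows essentially the same route as the paper: monotonicity from the definition, property \eqref{upward} from Theorem~\ref{t:incr}, and property \eqref{downward} via the stronger property \eqref{wupper}, using the coincidence $\C(\K)=AM(\K)$ on $\taux$-compact sets (Theorem~\ref{t:compok}) together with Proposition~\ref{L:wupper} for $AM$. The one link you should make explicit is that passing from $AM(\G)<c$ to $\C(\G)<c$ for the approximating \emph{open} sets $\G\supset\K$ cannot be done by Theorem~\ref{t:compok} (these $\G$ are not compact); it requires the inequality $\C\le AM$ on Borel sets, i.e.\ Proposition~\ref{p:cplem}, which is exactly the ingredient the paper's proof invokes at this point.
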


\begin{proof}
Property (i) is obvious, (ii) follows from Theorem \ref{t:incr} (and does not depend on the topology).
It remains to verify (iii). In fact, we can even verify (v).

Let $\K\subset\M^+(X)$ be a $\taux$-compact set and $c>\C(\K)$. By Theorem~\ref{t:compok}
we deduce that $c>AM(\K)$. Futher, by Proposition~\ref{L:wupper} there is a $\taux$-open set $\G\supset\K$ with $AM(\G)<c$. By by Proposition~\ref{p:cplem} we deduce that $\C(\G)<c$ and the proof is complete.
\end{proof}

\begin{rmrk} 
Since $\C$ is a Choquet capacity and $AM$ not, we cannot
expect the equality $AM=\C$ in general. It is even worse.
In view of Theorems \ref{t:incr} and 
\ref{t:nonincr-meas} we see that there may exist
a Borel set $\E\subset \M^+(X)$ which
fails to be $AM$-capacitable. 
Indeed, consider the sets $\E_j$ and $\E$ as in Lemma~\ref{l:construction}. Then $\E$ is a Borel set, but
\eqn{evenworse}
$$
\aligned
\sup\{AM(\K)\colon &\K\subset\E\;\tau_0\text{-compact}\}
=\sup\{\C(\K)\colon \K\subset\E\;\tau_0\text{-compact}\}
\\&=\C(\E)
=\lim_j\C(\E_j)\le
\lim_jAM(\E_j)< AM(\E).
\endaligned
$$
\end{rmrk}

\section{Modulus and content on locally compact spaces}\label{sec:lc}

In this section we assume that $X$ is
a locally compact Polish space and $\Cx(X)=C_0(X)$. In this seting we obtain a better correspondence 
between $AM$-modulus and $1$-content; we can express the $AM$-modulus in terms of the $1$-content.
The advantage of local compactness is that $\M(X)$ is the dual space to $C_0(X)$, see Section~\ref{sec:topologies}.

It follows that, for each $c>0$, the set $\{\mu\in\M^+(X)\colon \mu(X)\le c\}$
is bounded and weak* closed
in $C_0(X)^*$, hence it is weak* compact, or, in our notation, $\tau_0$ compact.
Consequently, $\M^+(X)$ is $\sigma$-compact in $\tau_0$ and thus
each $\F_{\sigma}$-subset of $\M^+(X)$ is a countable union of $\tau_0$-compact sets.

We obtain the following final result on modulus and content.

\begin{thm}\label{t:main}
Let $\E\subset \M^+(X)$ be arbitrary. Then
\eqn{main}
$$
\C(\E)\le AM(\E)=\inf\{\C(\G)\colon \G\supset \E \text{ is }\F_{\sigma} \text{ in }\tau_0\}
$$
\end{thm}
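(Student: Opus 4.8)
The plan is to deduce everything from the compact‑family identity $\C=AM$ (Theorem~\ref{t:compok}) through an intermediate step: first I would prove that $\C$ and $AM$ agree on every subset of $\M^+(X)$ that is $\F_{\sigma}$ in $\tau_0$, and then transfer this to an arbitrary $\E$ using the weak outer regularity of $AM$ recorded in Lemma~\ref{l:efsigma}. For the intermediate step, let $\A\subset\M^+(X)$ be $\F_{\sigma}$ in $\tau_0$. Since in this section $X$ is locally compact Polish and $\Cx(X)=C_0(X)$, the cone $\M^+(X)$ is $\sigma$‑compact in $\tau_0$, so $\A$ is a countable union of $\tau_0$‑compact sets; replacing the $k$‑th set by the union of the first $k$, we may write $\A=\bigcup_k\K_k$ with $\K_1\subset\K_2\subset\cdots$ all $\tau_0$‑compact. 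Now Theorem~\ref{t:incr} gives $\C(\A)=\lim_k\C(\K_k)$, Proposition~\ref{p:AMcomp} gives $AM(\A)=\lim_k AM(\K_k)$, and Theorem~\ref{t:compok} gives $\C(\K_k)=AM(\K_k)$ for each $k$; hence $\C(\A)=AM(\A)$.

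\textbf{The equality.} Granting this, the equality $AM(\E)=\inf\{\C(\G)\colon\G\supset\E\text{ is }\F_{\sigma}\text{ in }\tau_0\}$ follows at once. For ``$\le$'', take any $\tau_0$‑$\F_{\sigma}$ set $\G\supset\E$; then monotonicity of $AM$ and the intermediate step give $AM(\E)\le AM(\G)=\C(\G)$, and passing to the infimum over such $\G$ yields the inequality. For ``$\ge$'', fix $\ep>0$; since $\taux=\tau_0$ here, Lemma~\ref{l:efsigma} supplies a $\tau_0$‑$\F_{\sigma}$ set $\A\supset\E$ with $AM(\A)\le AM(\E)+\ep$, and by the intermediate step $\C(\A)=AM(\A)\le AM(\E)+\ep$, so the infimum is at most $AM(\E)+\ep$; letting $\ep\to0$ completes it.

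\textbf{The first inequality.} The remaining inequality $\C(\E)\le AM(\E)$ is then immediate: each induced outer measure $\bnu^*$ is monotone, hence so is $\C$, so $\C(\E)\le\C(\G)$ for every $\G\supset\E$, and taking the infimum over the $\tau_0$‑$\F_{\sigma}$ hulls $\G$ of $\E$ and using the equality just proved gives $\C(\E)\le AM(\E)$. (Alternatively one may cite Proposition~\ref{p:cplem} after passing to an $\F_{\sigma}$ hull.)

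\textbf{Main obstacle.} I expect the only load‑bearing ingredient to be the compact case, which is already available as Theorem~\ref{t:compok}; the rest is a matter of correctly wiring together the two increasing‑continuity statements (Theorem~\ref{t:incr} for $\C$ and Proposition~\ref{p:AMcomp} for $AM$) with the $\sigma$‑compactness of $\M^+(X)$ in $\tau_0$. The one place where local compactness of $X$ is genuinely used is precisely in reducing an arbitrary $\tau_0$‑$\F_{\sigma}$ set to an increasing union of $\tau_0$‑compact sets, so that both continuity statements can be applied on the same sequence.
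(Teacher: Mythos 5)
Your proposal is correct and follows essentially the same route as the paper: decompose a $\tau_0$-$\F_\sigma$ set into an increasing union of $\tau_0$-compact sets via $\sigma$-compactness, combine Theorem~\ref{t:incr}, Proposition~\ref{p:AMcomp} and Theorem~\ref{t:compok} to get $\C=AM$ on such sets, and then pass to arbitrary $\E$ through Lemma~\ref{l:efsigma}. The only difference is cosmetic organization (you isolate the $\F_\sigma$ case as an explicit intermediate step), so nothing further is needed.
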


\begin{proof}
By Lemma \ref{l:efsigma}, we have
$$
AM(\E)=\inf\{AM(\G)\colon \G\supset \E \text{ is }\F_{\sigma} \}.
$$
However, if $\G\subset \M^+(X)$ is $\F_{\sigma}$ in $\tau_0$, there exists
an increasing sequence $(\K_j)_j$ of $\tau_0$-compact sets such that 
$\G=\bigcup_j\K_j$.
By Proposition \ref{p:AMcomp}, Theorem \ref{t:compok}, and Theorem \ref{t:incr},
$$
AM(\G)=\lim_{j\to\infty}AM(\K_j)=\lim_{j\to\infty}\C(\K_j)=\C(\G)
$$
for each $\taux$-open set $\G$.
Hence
$$
\C(\E)\le \inf\{\C(\G)\colon \G\supset \E \text{ is }\F_{\sigma} \text{ in }\tau_0\}
=\inf\{\AM(\G)\colon \G\supset \E \text{ is }\F_{\sigma} \text{ in }\tau_0\}=\AM(\E).
$$
\end{proof}

\begin{rmrk}
The equality in \eqref{main} resembles usual definitions of outer capacities from ``precapacities''
on compact set.
Unexpectedly, we consider infimum over $\F_{\sigma}$ sets and not over open sets.
The reason is that $AM$ is not outer regular, see Theorem \ref{t:nonouter}.
\end{rmrk}

\begin{rmrk} Let $\E$ be as in Lemma~\ref{l:construction}. Then by \eqref{evenworse} and 
Theorem \ref{t:main},
$$
\C(\E)<AM(\E)=\inf\{\C(\G)\colon \G\supset \E \text{ is }\F_{\sigma} \text{ in }\tau_0\}.
$$
So, $\C$ fails not only outer regularity, but also the weaker form of upper regularity
which is satisfied by $AM$ by Lemma \ref{l:efsigma}. Thus, from the point of view of 
continuity on increasing families, $\C$ is ``better'' than $AM$, but $\C$ has 
worse regularity properties.
\end{rmrk}




\end{document}